\numberwithin{equation}{section}
\newtheorem{Theorem}{Theorem}[section]
\newtheorem{Lemma}[Theorem]{Lemma}
{ \theoremstyle{definition}
\newtheorem{Definition}[Theorem]{Definition}

\newtheorem{Example}[Theorem]{Example}
\newtheorem{Remark}[Theorem]{Remark} }
\begin{document}
\allowdisplaybreaks

\newcommand{\arXivNumber}{1812.04511}

\renewcommand{\PaperNumber}{056}

\FirstPageHeading

\ShortArticleName{Invariant Nijenhuis Tensors and Integrable Geodesic Flows}

\ArticleName{Invariant Nijenhuis Tensors\\ and Integrable Geodesic Flows}

\Author{Konrad LOMPERT~$^\dag$ and Andriy PANASYUK~$^\ddag$}

\AuthorNameForHeading{K.~Lompert and A.~Panasyuk}

\Address{$^\dag$~Faculty of Mathematics and Information Science, Warsaw University of Technology,\\
\hphantom{$^\dag$}~ul.~Koszykowa~75, 00-662 Warszawa, Poland}
\EmailD{\href{mailto:k.lompert@mini.pw.edu.pl}{k.lompert@mini.pw.edu.pl}}

\Address{$^\ddag$~Faculty of Mathematics and Computer Science, University of Warmia and Mazury,\\
\hphantom{$^\ddag$}~ul.~S{\l}oneczna 54, 10-710 Olsztyn, Poland}
\EmailD{\href{mailto:panas@matman.uwm.edu.pl}{panas@matman.uwm.edu.pl}}

\ArticleDates{Received December 19, 2018, in final form August 02, 2019; Published online August 07, 2019}

\Abstract{We study invariant Nijenhuis $(1,1)$-tensors on a homogeneous space $G/K$ of a reductive Lie group $G$ from the point of view of integrability of a Hamiltonian system of differential equations with the $G$-invariant Hamiltonian function on the cotangent bundle~$T^*(G/K)$. Such a tensor induces an invariant Poisson tensor $\Pi_1$ on $T^*(G/K)$, which is Poisson compatible with the canonical Poisson tensor~$\Pi_{T^*(G/K)}$. This Poisson pair can be reduced to the space of $G$-invariant functions on $T^*(G/K)$ and produces a family of Poisson commuting $G$-invariant functions. We give, in Lie algebraic terms, necessary and sufficient conditions of the completeness of this family. As an application we prove Liouville integrability in the class of analytic integrals polynomial in momenta of the geodesic flow on two series of homogeneous spaces $G/K$ of compact Lie groups $G$ for two kinds of metrics: the normal metric and new classes of metrics related to decomposition of $G$ to two subgroups $G=G_1\cdot G_2$, where $G/G_i$ are symmetric spaces, $K=G_1\cap G_2$.}

\Keywords{bi-Hamiltonian structures; integrable systems; homogeneous spaces; Lie algebras; Liouville integrability}

\Classification{37J15; 37J35; 53D25}

\section{Introduction}

By Maupertuis's principle integrability of the geodesic flow of a (pseudo-)Riemannian metric is a question as old as classical mechanics itself. In this paper we consider Hamiltonian systems and understand integrability in the sense of Arnold--Liouville, i.e., as existence of a complete family of first integrals in involution. The Clairaut theorem on existence of linear integral for the motion of a free particle on a surface of revolution is traditionally mentioned as one of the first results on Arnold--Liouville integrability of geodesic flows. Next classical cases are the Euler top and geodesics on ellipsoid. In modern mathematical literature one could find many examples of integrable geodesic flows on homogeneous spaces of Lie groups starting probably with the papers~\cite{mishch,thimm}, see also the review~\cite{bjObzor} and references therein and later works \cite{DGJ,jovanovic10,mykSuborb,p4}.

The present paper continues this line and develops a new approach for constructing integrable geodesic flows on homogeneous spaces. Let $G$ be a reductive Lie group, $K \subset G$ its closed subgroup. The cotangent bundle $T^*(G/K)$ with its canonical Poisson structure $\Pi$ is a phase space of a Hamiltonian system with the Hamiltonian function equal to the quadratic form~$q$ of an $G$-invariant pseudo-Riemannian metric, which can be constructed as follows. Let $\langle\,,\,\rangle$ be an $\operatorname{Ad} G$-invariant symmetric bilinear form on~$\mathfrak{g}$, the Lie algebra of $G$. It gives rise to a bi-invariant metric on $G$, which induces on $G/K$ an $G$-invariant metric $\langle\,,\,\rangle_{G/K}$ called \emph{normal}. Besides, one can consider a symmetric $\operatorname{ad} \mathfrak{k}$-invariant linear operator (called \emph{inertia operator}) $n_{\mathfrak{k}^\perp}\colon \mathfrak{k}^\perp\to\mathfrak{k}^\perp$, where $\mathfrak{k}$ is the Lie algebra of $K$ and $\mathfrak{k}^\perp$ is its orthogonal complement in $\mathfrak{g}$ with respect to~$\langle\,,\,\rangle$. It will give rise to an $G$-invariant $(1,1)$-tensor $N\colon T(G/K)\to T(G/K)$, which is symmetric with respect to $\langle\,,\,\rangle_{G/K}$, and to another $G$-invariant metric $\langle\cdot,\cdot\rangle_N:=\langle N\cdot,\cdot\rangle_{G/K}$. The question of integrability of the geodesic flows of both metrics $\langle\,,\,\rangle_{G/K}$ and $\langle\cdot,\cdot\rangle_N$ on $G/K$ consists of finding a family of $\dim(G/K)-1$ independent analytic and \emph{polynomial in momenta} functions on $T^*(G/K)$ which Poisson commute with the quadratic form $q$ and with each other. It is known \cite[Section~5]{bjObzor} that there are two families $\mathcal{F}_1$ and~$\mathcal{F}_2$ of analytic functions on $T^*(G/K)$ that Poisson commute with each other, in which one can look for desirable integrals. These are the family $\mathcal{F}_2$ of $G$-invariant functions and the family $\mathcal{F}_1$ of functions of the form~$\mu_{\rm can}^*f$, where $\mu_{\rm can}\colon T^*(G/K)\to\mathfrak{g}^*$ is the momentum map corresponding to the natural Hamiltonian action of~$G$ on $T^*(G/K)$ and $f$ is an analytic function on $\mathfrak{g}^*$. Obviously $q\in\mathcal{F}_2$ and taking a family $\mathcal{F}$ of commuting polynomials on $\mathfrak{g}^*$ (by the Sadetov theorem~\cite{sadetov} there exist \emph{complete} such families, see also~\cite{bolsinov16}) one gets the family $\mathcal{A}:=\mu_{\rm can}^*(\mathcal{F})$ of integrals of $q$ polynomial in momenta. Thus the problem now is reduced to the following one: construct a family $\mathcal {B}\subset\mathcal{F}_2$ of commuting polynomial in momenta integrals of $q$ such that the family $\mathcal{A}+\mathcal {B}$ is complete.

An approach for constructing such a family $\mathcal{B}$ was proposed in~\cite{p4}. The homogeneous spaces considered were the coadjoint orbits $\mathcal{O}$ of $G$. A second $G$-invariant Poisson structure $\Pi_1$ was constructed on $T^*(G/K)$ which is compatible with $\Pi$ and the family $\mathcal{B}$ was the canonical family of functions in involution related with the Poisson pair $(\Pi',\Pi_1')$ being the reduction of the Poisson pair $(\Pi,\Pi_1)$ with respect to the action of $G$. Essential role in the construction of $\Pi_1$ played the Kirillov--Kostant--Suriau symplectic form $\omega_\mathcal{O}$ on $\mathcal{O}$, as $\Pi_1=(\omega+\pi^*\omega_\mathcal{O})^{-1}$, where $\omega=-\Pi^{-1}$ is the canonical symplectic form on $T^*\mathcal{O}$ and $\pi\colon T^*\mathcal{O}\to \mathcal{O}$ is the canonical projection.

In this paper we propose a novel approach for constructing the family $\mathcal{B}$. Similarly to the case above, we construct a second Poisson structure $\Pi_1$ compatible with $\Pi$, but we use invariant Nijenhuis $(1,1)$-tensors $N\colon T(G/K)\to T(G/K)$ for this purpose instead, in particular avoiding the restriction on $G/K$ of being a coadjoint orbit. In more detail, $\Pi_1=\widetilde{N}\circ\Pi$, where $\widetilde{N}$ is the so-called \emph{cotangent lift} of $N$, see Definition~\ref{ctgl}. Obviously, an invariant $(1,1)$-tensor on~$G/K$ is determined by a linear operator $n\colon \mathfrak{g}\to\mathfrak{g}$. We get some Lie algebraic conditions on this operator which are necessary and sufficient for the so-called \emph{kroneckerity} of the Poisson pair~$(\Pi',\Pi_1')$ obtained as the reduction of the pair $(\Pi,\Pi_1)$ and, as a consequence, of the completeness of the family $\mathcal{B}$ (and $\mathcal{A}+\mathcal {B}$), see Theorem~\ref{kro}, the main result of this paper, and Theorem~\ref{thgeod}. As an application we construct two series of invariant Nijenhuis $(1,1)$-tensors on homogeneous spaces~$G_k/K_k$ of compact simple Lie groups, where $(G_k,K_k)$ is $({\rm SU}(2k),{\rm S}({\rm U}(2k-1)\times {\rm U}(1))\cap {\rm Sp}(k))$ or $({\rm SO}(2k+2),{\rm SO}(2k+1)\cap {\rm U}(k+1))$, which lead to invariant metrics with geodesic flow Liouville integrable in the class of integrals analytic and polynomial in momenta (Theorem~\ref{appl}). Besides we prove integrability of the normal metric on these homogeneous spaces. Below the content of the paper is discussed in more detail.

In Section~\ref{s.1} we study Lie algebraic conditions on the operator $n\colon \mathfrak{g}\to\mathfrak{g}$ which guarantee the vanishing of the Nijenhuis torsion of~$N$ (Theorem~\ref{T1}) and consider some examples.

A crucial role in our considerations play bi-Hamiltonian (bi-Poisson) structures, i.e., pencils of Poisson structures generated by pairs of compatible ones. We devote Section \ref{sec:Bi-Poisson0} to related notions and preparatory results which will enable us to study the completeness of families of functions in involution. Theorem~\ref{below} gives some criteria of completeness of the canonical family of $G$-invariant functions related to an action of a Lie group~$G$ on a bi-Poisson manifold~$M$ being Hamiltonian with respect to almost all Poisson structures from the pencil. The theorem requires some assumptions among which the most significant one says that the action of $G$ on $M$ is \emph{locally free}. This assumption enables to use the so-called bifurcation lemma and to prove the constancy of rank of the reduced bi-Poisson structure for almost all values of the parameter, which is a~first step for achieving the kroneckerity.

In Section~\ref{sec:Bi-Poisson} we study bi-Poisson structures on $T^*(G/K)$ generated by Poisson pairs $(\Pi,\Pi_1=\widetilde{N}\circ\Pi)$, where $N$ is a semisimple invariant Nijenhuis $(1,1)$-tensor. We show that almost all (\emph{generic}) Poisson structures from the corresponding Poisson pencil are nondegenerate and calculate the dimensions of the symplectic leaves of the\emph{ exceptional} (not being generic) Poisson structures (Lemma~\ref{L56}). We prove the hamiltonicity of canonical action of~$G$ on $T^*(G/K)$ with respect to the generic Poisson structures, as well as the hamiltonicity of the actions of some subgroups (stabilizers of the symplectic leaves) on the symplectic leaves of the exceptional ones. We calculate the corresponding momentum maps (see Lemma~\ref{lemk}) as well as these stabilizers (Lemma~\ref{stab}).

The main result, Theorem \ref{kro}, which gives necessary and sufficient conditions for kroneckerity of the reduced Poisson pair~$(\Pi',\Pi_1')$ in terms of the \emph{indices} of the Lie algebra $\mathfrak{g}$ and some its contractions (see formula~(\ref{eq78})), is proved in Section~\ref{s.kro}. As a corollary we prove Theorem~\ref{thgeod} stating the complete integrability of the geodesic flow of the normal metric and the metric with the inertia operator~$n|_{\mathfrak{k}^\perp}$ under the assumption that the sufficient conditions from Theorem~\ref{kro} are satisfied.

In Section~\ref{s.appl} we apply the above results to construct examples of metrics with integrable geodesic flow. The main idea which enables to fit conditions of Theorem~\ref{kro} is based on the Brailov theorem (see Theorem \ref{brailov}) stating equality of indices of a semisimple Lie algebra and its $\mathbb{Z}_2$-\emph{contractions}. We observe that among the examples of invariant Nijenhuis $(1,1)$-tensors on a homogeneous space $G/K$ from Section~\ref{s.1} related to the Onishchik list of decompositions $\mathfrak{g}=\mathfrak{g}_1+\mathfrak{g}_2$ of a simple compact Lie algebra to two subalgebras (Example~\ref{onish}) there are two series $(\mathfrak{g}(k),\mathfrak{g}_1(k),\mathfrak{g}_2(k))$ in which both the pairs $(\mathfrak{g}(k),\mathfrak{g}_1(k))$ and $(\mathfrak{g}(k),\mathfrak{g}_2(k))$ are \emph{symmetric}, i.e., by the Brailov theorem these examples satisfy conditions~(\ref{eq78}) of Theorem~\ref{kro} (the Lie algebra $\mathfrak{k}$ of the group $K$ is equal $\mathfrak{g}_1(k)\cap\mathfrak{g}_2(k)$). In order to apply this theorem for the proof of complete integrability of the geodesic flow one needs only ensure that the action of $G$ on $T^*(G/K)$ is locally free. This is done in the proof of Theorem~\ref{appl} stating the complete integrability of the geodesic flows of the normal metric and the metric with the corresponding inertia operator.

The explicit formulae for the realizations of Lie algebras $\mathfrak{g}(k)$, $\mathfrak{g}_1(k)$, $\mathfrak{g}_2(k)$ for both series as well as for the corresponding inertia operators are given in Appendix~\ref{appe}. There we also indicate conditions under which these operators (and the corresponding metrics) are positive definite. We end the paper by concluding remarks (Section~\ref{concl}) in which we discuss some details of the paper and possible perspectives.

Fix some notations. We write $P\colon G\to G/K$, $\pi\colon T^*M\to M$, and $p\colon M\to M/G$ for the canonical projections.

All objects in this paper are real analytic or complex analytic. Given a vector bundle $E$, we write $\Gamma(E)$ for the space of sections of $E$, and $\mathcal{E}(M)$ will stand for the space of functions on a~mani\-fold $M$ (of the corresponding category).

\section{Invariant Nijenhuis tensors on homogeneous spaces}\label{s.1}

\begin{Definition}\label{nijdef}Let $M$ be a connected manifold. A $(1,1)$-tensor field $N\colon TM\to TM$ is \emph{a~Nijenhuis tensor} if its Nijenhuis torsion vanishes,
i.e., for any vector fields $X,Y\in \Gamma(TM)$:
\begin{gather*}
 T_N(X,Y):=[NX,NY]-N [X,Y]_N=0,
\end{gather*}
where we put
\begin{gather*}
[X,Y]_N:=[NX,Y]+[X,NY]-N[X,Y].
\end{gather*}
Similarly, given any Lie algebra $(\mathfrak{g}, [\,,\,])$, a linear operator $n\colon \mathfrak{g}\to\mathfrak{g}$ is an \emph{algebraic Nijenhuis operator} if it satisfies
$T_n(X,Y):=[nX,nY]-n ( [nX,Y] + [X,nY] - n[X,Y] )=0$ for all vectors $X,Y\in \mathfrak{g}$ (cf.~\cite{golsok2000,mks}).
\end{Definition}

Let an action of a Lie group $G$ on a manifold $M$ be given.

\begin{Definition}\label{def2}We say that a $(1,1)$-tensor field $N\colon TM\to TM$ is $G$-\emph{invariant} if for any element of the Lie group $ g\in G$, the tensor $N$ commutes with the tangent map $g_*\colon TM \to TM$ to the diffeomorphism $g\colon M\to M$, i.e., the following diagram is commutative
\begin{gather*}
\begin{matrix}
TM & \stackrel{N}{\longrightarrow}& TM \\
\downarrow g_*& & \downarrow g_* \\
TM &\stackrel{N}{\longrightarrow}&TM .
\end{matrix}
\end{gather*}
A distribution of subspaces $D_x \subset T_xM$ is $G$-\emph{invariant}, if for any $g\in G$ and any $x\in M$ we have
\begin{gather*}
g_{*,x}(D_x)=D_{gx}.
\end{gather*}
\end{Definition}

The following lemmas are crucial ingredients in further considerations. Let $G$ be any Lie group and $K$ a closed Lie subgroup of $G$ (the quotient $M=G/K$ is then a smooth $G$-manifold).

\begin{Lemma}\label{L1}Let $N\colon T(G/K) \to T(G/K)$ be a semisimple $(1,1)$-tensor and assume that $N$ is $G$-invariant. Then the eigenvalues of $N$ are constant.
\end{Lemma}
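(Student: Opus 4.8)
The plan is to exploit the $G$-invariance of $N$ together with the fact that $G$ acts transitively on $G/K$, so that all fibres $N_x\colon T_x(G/K)\to T_x(G/K)$ are linearly conjugate to one another. First I would fix a base point $o=eK\in G/K$ and observe that for any $x\in G/K$ there is a $g\in G$ with $go=x$; by Definition~\ref{def2} the diagram gives $N_x\circ g_{*,o}=g_{*,o}\circ N_o$, i.e., $N_x=g_{*,o}\circ N_o\circ (g_{*,o})^{-1}$. Since conjugate operators have the same spectrum, the set of eigenvalues of $N_x$ (with multiplicities) is independent of $x$. In particular, each individual eigenvalue, viewed as a function $\lambda\colon G/K\to \mathbb{C}$ (or $\mathbb{R}$), is locally constant once we know it is continuous, because it can only take values in the fixed finite spectrum $\{\lambda_1,\dots,\lambda_r\}$ of $N_o$.

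The remaining point is therefore a regularity/continuity statement: the eigenvalues of $N$ depend continuously (indeed analytically) on the point. Here I would use semisimplicity of $N$: over a connected manifold a semisimple $(1,1)$-tensor with constant spectrum decomposes $T(G/K)$ into a direct sum of eigendistributions $D_i=\ker(N-\lambda_i\,\mathrm{id})$ of constant rank, and conversely the eigenvalues are the coefficients of the characteristic polynomial $\det(t\,\mathrm{id}-N)$, whose coefficients are smooth (analytic) functions on $G/K$ — traces of exterior powers of $N$. A smooth function taking values in a finite set on a connected manifold is constant. Combining this with the previous paragraph — the eigenvalues lie in the finite set $\mathrm{spec}(N_o)$, and each symmetric function of them is continuous — yields that each coefficient of the characteristic polynomial is constant, hence the eigenvalues themselves are constant on the connected manifold $G/K$.

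The main obstacle, such as it is, is making the "each eigenvalue is constant" step rigorous without circularity: one cannot immediately speak of "the $i$-th eigenvalue function" since eigenvalues are a priori only defined as an unordered set, and branches can be permuted. The clean way around this is exactly to phrase everything through the characteristic polynomial $\chi_x(t)=\det(t\,\mathrm{id}-N_x)$: its coefficients are globally well-defined analytic functions on $G/K$, the $G$-invariance forces $\chi_x=\chi_o$ for all $x$ by the conjugacy observation, and then the eigenvalues — being the roots of one fixed polynomial — are a fixed finite set. Semisimplicity is not strictly needed for constancy of the spectrum as a set; it guarantees in addition that the algebraic and geometric multiplicities coincide and that the eigendistributions are genuine smooth subbundles, which is the form in which the lemma will be used later (e.g., for the invariant distributions of Definition~\ref{def2}). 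I expect the write-up to be short: the transitivity-plus-invariance conjugacy argument is the whole content, and connectedness of $G/K$ supplies the rest.
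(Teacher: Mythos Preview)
Your proposal is correct and follows essentially the same approach as the paper: the paper's one-line proof observes that $G$-invariance of $N$ forces the eigenvalues (as functions on $G/K$) to be $G$-invariant, hence constant by transitivity --- exactly your conjugacy argument $N_x=g_{*,o}N_o(g_{*,o})^{-1}$ phrased slightly differently. Your detour through continuity and the characteristic polynomial is unnecessary (as you yourself note in the final paragraph, $\chi_x=\chi_o$ for all $x$ already finishes it), but it does no harm.
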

\begin{proof}Since the operator $N$ is $G$-invariant, it follows that its eigenfunctions are also $G$-invariant, therefore on homogeneous space they are constant functions.
\end{proof}

Given a real manifold $M$, we write $T^\mathbb{C} M$ for the complexified tangent bundle to $M$.

\begin{Lemma}\label{L3}Let $G/K$ be a real homogeneous space. There is a one-to-one correspondence between $G$-invariant distributions $D \subset T^\mathbb{C}(G/K)$ and subspaces $ d \subset\mathfrak{g}^\mathbb{C}$ such that $\mathfrak{k}^\mathbb{C} \subset d$ and $\big[\mathfrak{k}^\mathbb{C}, d\big]\subset d$ $($here $\mathfrak{g},\mathfrak{k} \subset\mathfrak{g}$ are the Lie algebras of the Lie groups $G, K \subset G)$. An $G$-invariant distribution $D$ is involutive if and only if the subspace $ d$ is a subalgebra in $\mathfrak{g}^\mathbb{C}$. Moreover, $D$ is real, i.e., $D=\overline D$, where the bar stands for the complex conjugation on $T^\mathbb{C}(G/K)$, if and only if so is $ d$, i.e., $d=\bar d$, where the bar denotes the complex conjugation in $\mathfrak{g}^\mathbb{C}$ with respect to the real form $\mathfrak{g}$.
\end{Lemma}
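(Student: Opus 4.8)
The plan is to establish the correspondence in stages, first at the level of distributions and then adding the extra properties (involutivity, reality). The starting point is the standard identification of $G$-invariant objects on $G/K$ with $K$-equivariant data at the base point $o=eK$. A $G$-invariant distribution $D\subset T^{\mathbb C}(G/K)$ is completely determined by its fiber $D_o\subset T_o^{\mathbb C}(G/K)$, which must be invariant under the isotropy representation of $K$ on $T_o^{\mathbb C}(G/K)$; conversely any $K$-invariant subspace of $T_o^{\mathbb C}(G/K)$ spreads out by the $G$-action to a well-defined $G$-invariant distribution (well-definedness being exactly the $K$-invariance). Then I would use the canonical projection $P\colon G\to G/K$ and its differential at $e$, which gives a surjection $\mathfrak g^{\mathbb C}\to T_o^{\mathbb C}(G/K)$ with kernel $\mathfrak k^{\mathbb C}$, to transport $K$-invariant subspaces of $T_o^{\mathbb C}(G/K)$ to subspaces $d\subset\mathfrak g^{\mathbb C}$ containing $\mathfrak k^{\mathbb C}$. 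The condition that such a $d$ be $\mathrm{Ad}\,K$-invariant (equivalently, since we may work infinitesimally when $K$ is connected, and in general by the standard argument that $\mathrm{Ad}\,K$-invariance of a subspace containing $\mathfrak k^{\mathbb C}$ reduces to the bracket condition) is precisely $[\mathfrak k^{\mathbb C},d]\subset d$. This yields the first bijection.

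For the involutivity statement I would compute the bracket of two $G$-invariant vector fields lying in $D$ in terms of the corresponding elements of $\mathfrak g^{\mathbb C}$. The cleanest route is to use left-invariant vector fields on $G$ and the projection $P$: sections of $D$ can be generated, near $o$, by $P_*$ of left-invariant vector fields whose values at $e$ lie in $d$, and $P_*[\xi^L,\eta^L]=P_*[\xi,\eta]^L$, so the bracket of two such vector fields corresponds (modulo $\mathfrak k^{\mathbb C}$, which lies in $d$ anyway) to $[\xi,\eta]$. Hence $D$ is involutive at $o$ iff $[d,d]\subset d$, i.e.\ $d$ is a subalgebra; and by $G$-invariance involutivity at $o$ propagates to involutivity everywhere. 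One should be slightly careful to note that not every section of $D$ is a combination of these invariant generators, but since the invariant generators span $D$ pointwise and the Nijenhuis/Frobenius condition is $C^\infty(G/K)$-bilinear and antisymmetric, checking it on a spanning set suffices.

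The reality statement is the easiest: complex conjugation on $T^{\mathbb C}(G/K)$ is $G$-equivariant (because the $G$-action is by real diffeomorphisms), so $\overline D$ is again a $G$-invariant distribution, and under the correspondence it goes to $\overline d$, since the differential of $P$ at $e$ intertwines the conjugation on $\mathfrak g^{\mathbb C}$ (with respect to the real form $\mathfrak g$) with the conjugation on $T_o^{\mathbb C}(G/K)$ (with respect to the real subspace $T_o(G/K)$). Therefore $D=\overline D$ iff $D_o=\overline{D_o}$ iff $d=\overline d$.

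I expect the main obstacle — really the only point requiring care rather than routine bookkeeping — to be the reduction of the $\mathrm{Ad}\,K$-invariance of $d$ to the infinitesimal bracket condition $[\mathfrak k^{\mathbb C},d]\subset d$ when $K$ is not assumed connected. The resolution is standard: a subspace $d\supset\mathfrak k^{\mathbb C}$ satisfying $[\mathfrak k^{\mathbb C},d]\subset d$ is automatically $\mathrm{Ad}\,K_0$-invariant, and the statement of the lemma is about $G$-invariant distributions, so if one wants the correspondence to be exactly with this bracket condition one either restricts attention to the identity component (as is implicit in treating $D$ as determined by its behavior near $o$ under the connected group generated by one-parameter subgroups) or notes that in the applications $K$ is connected. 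I would phrase the proof so that this subtlety is acknowledged but does not obstruct the argument, since every other step is a direct translation via $P_*\colon\mathfrak g^{\mathbb C}\to T_o^{\mathbb C}(G/K)$.
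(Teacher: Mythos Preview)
Your outline is correct in spirit and runs parallel to the paper's argument, which also passes through $G$ via the lift $\widehat D:=P_*^{-1}(D)\subset T^{\mathbb C}G$. The bijection via the isotropy representation and the reality statement are fine; the paper instead works with $\widehat D$ throughout and uses explicit submersion coordinates, but the content is the same. You also correctly flag the connected-$K$ subtlety, which the paper glosses over by passing directly from $\operatorname{Ad} K$-invariance to $\operatorname{ad}\mathfrak k$-invariance.

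There is, however, a gap in your involutivity argument. Left-invariant vector fields $\xi^L$ on $G$ are \emph{not} in general $P$-projectable: $\xi^L$ descends to a well-defined vector field on $G/K$ only when $\xi+\mathfrak k^{\mathbb C}$ is fixed by the isotropy representation, so ``$P_*\xi^L$'' is not a section of $D$ and the identity $P_*[\xi^L,\eta^L]=P_*[\xi,\eta]^L$ has no meaning as an equality of vector fields on $G/K$. You are right that the obstruction to involutivity is tensorial (the map $(X,Y)\mapsto [X,Y]\bmod D$ is $C^\infty$-bilinear on $\Gamma(D)$), so checking it on a pointwise spanning set of \emph{genuine local sections} would suffice---but you have not produced such sections. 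The remedy is exactly what the paper does: stay on $G$ with the left-invariant distribution $\widehat D$ (which \emph{is} globally spanned by the $\xi^L$, $\xi\in d$), observe that $\widehat D$ is involutive iff $d$ is a subalgebra, and then argue separately that $\widehat D$ is involutive iff $D$ is. This last equivalence is the substantive step you are missing; the paper establishes it with local submersion coordinates, though one may equally invoke the general fact that the pullback of a distribution along a surjective submersion is involutive iff the original distribution is.
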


\begin{proof} Below we let $P\colon G\to G/K$ to denote the canonical projection. An invariant distribution~$D$ on $G/K$ defines the distribution $\widehat D:=P_*^{-1}(D)\subset T^\mathbb{C} G$, which by construction is left $G$-invariant. Indeed, the invariance of~$D$, $g_{*,x}(D_x)=D_{gx}$, implies $L_g$-invariance of $\widehat D$ as the commutativity of the following diagram shows
\begin{gather*}
\begin{matrix}
T_yG & \stackrel{L_{g,*}|_y}{\longrightarrow}& T_{gy}G \\
\downarrow P_{*,y}& & \downarrow P_{*,gy} \\
T_x(G/K) &\stackrel{g_{*,x}}{\longrightarrow}&T_{gx}(G/K);
\end{matrix}
\end{gather*}
 here $L_g$ is the left translation by $g$ and $y\in G$ is so that $P(y)=x$.

Moreover, $\widehat D$ is right $K$-invariant. To show this observe that, since $P$ is a surjective submersion, in a vicinity of points $g\in G$ and $P(g)\in G/K$ there exist local coordinate systems $(x_1,\dots,x_m, y_1,\dots,y_k)$ and $(x'_1,\dots,x'_m)$ respectively such that $P(x_1,\dots,x_m,y_1,\dots,y_k)=(x'_1,\dots,x'_m)$, $x'_i=x_i$, $i=1,\dots,m$. Let $X_1(x'),\dots,X_l(x')$, $X_r(x')=X_r^i(x_1',\dots,x_m') \frac{\partial}{\partial x_i'}$, be local linearly independent vector fields on $G/K$ generating the distribution~$D$. Then the distribution~$\widehat D$ is generated by the vector fields $\widehat X_r(x)=X_r^i(x_1,\dots,x_m) \frac{\partial }{\partial x_i}$, $r=1,\dots,l$, and $Y_1,\dots,Y_k$, where the last ones are the fundamental vector fields of the right $K$-action. These last are tangent to the fibers of $P$, locally can be expressed as combinations of $\frac{\partial }{\partial y_j}$ and vice versa, $\frac{\partial }{\partial y_j}$~can be locally expressed as combinations of $Y_1,\dots,Y_k$. Obviously, $\big[Y_i,\widehat X_j\big]=f_{ij}^sY_s$ for some functions~$f_{ij}^s$, which together with the involutivity of the system of vector fields $\{Y_1,\dots,Y_k\}$ gives $\big[Y_i,\widehat{D}\big] \subset\widehat{D}$.

Let $ d:= \widehat D_e \subset T^\mathbb{C}_eG$, where $e\in G$ is the neutral element. The left and right $K$-invariance of~$\widehat D$ implies, under the identification $T^\mathbb{C}_eG\cong\mathfrak{g}^\mathbb{C}$, the $\operatorname{Ad} (K)$-invariance of the subspace $ d \subset \mathfrak{g}^\mathbb{C}$, or, on the infinitesimal level, its $\operatorname{ad}(\mathfrak{k})$-invariance: $\big[\mathfrak{k}^\mathbb{C}, d \big] \subset d$.

Now if $D$ is involutive, then so is $\widehat D$. Indeed, the systems of vector fields $\{X_j\}$ and, consequently, $\big\{\widehat X_j\big\}$ are involutive. Hence so is the total system of vector fields $\big\{Y_i,\widehat X_j\big\}$. Infinitesimally this can be expressed as $[ d, d] \subset d$.

Vice versa, let $ d \subset\mathfrak{g}^\mathbb{C}\cong T^\mathbb{C}_eG$ be an $\operatorname{ad}\mathfrak{k}$-invariant subspace. Define a distribution $\widehat D \subset T^\mathbb{C} G$ by $\widehat D_g=L_{g,*} d$. Then $\widehat D$ is left $G$-invariant and right $K$-invariant and descends to a uniquely defined invariant distribution $D \subset T^\mathbb{C}(G/K)$ by means of the complexified tangent map $P_*^\mathbb{C}\colon \allowbreak T^\mathbb{C} G\to T^\mathbb{C}(G/K)$.

If $ d \subset \mathfrak{g}^\mathbb{C}$ is a subalgebra, then clearly the distribution $\widehat D$ is involutive. Moreover, from the above local description it follows that the system of vector fields $\big\{ \widehat X_j \big\} $ is involutive and $P_* \widehat X_j = X_j $, and, as a consequence, so is the system~$\{X_j\}$. Therefore $ D\subset T^\mathbb{C} (G/K)$ is involutive.

The last assertion of the lemma is obvious.
\end{proof}

\begin{Lemma}\label{L5}Let $D \subset T(G/K)$ be an $G$-invariant integrable distribution on $G/K$ relative to a~subalgebra $\mathfrak{h}\subset \mathfrak{g}$, $\mathfrak{h}\supset \mathfrak{k}$ $($as in Lemma~{\rm \ref{L3}} but we admit also the complex analytic case$)$, and let $H \subset G$ be the corresponding subgroup. Denote by $P\colon G\to G/K$ the canonical projection. Then
\begin{enumerate}\itemsep=0pt
\item[$1)$] the leaves of the foliation tangent to $D$ are the projections with respect to $P$ of the left cosets $gH$, $g\in G$;
\item[$2)$] given $\xi\in\mathfrak{g}$, the fundamental vector field $X_\xi$ of the $G$-action on $G/K$ is tangent to the leaf $P(gH)$ if and only if $\xi \in \operatorname{Ad}_g \mathfrak{h} \subset \mathfrak{g} $.
 \end{enumerate}
\end{Lemma}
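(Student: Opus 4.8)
The plan is to lift the whole picture to $G$ along the projection $P$ and then read both assertions off the correspondence of Lemma~\ref{L3} together with one elementary computation of fundamental vector fields. Write $\widehat D:=P_*^{-1}(D)\subset TG$; by Lemma~\ref{L3} this distribution is left $G$-invariant, involutive and given by $\widehat D_g=L_{g,*}\mathfrak h$, so in particular $D_{P(g)}=P_{*,g}L_{g,*}(\mathfrak h)$, and the linear map $P_{*,g}L_{g,*}\colon\mathfrak g\to T_{P(g)}(G/K)$ is surjective with kernel $\mathfrak k\subset\mathfrak h$. Here $H$ denotes the connected integral subgroup of $\mathfrak h$.

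For part~1) I would first note that the leaves of the left-invariant involutive distribution $\widehat D$ are exactly the left cosets $gH$ (the maximal integral manifold through $e$ is $H$, and left invariance moves it around). Then I would show that $P$ carries the leaves of $\widehat D$ onto the leaves of $D$: on one hand $P$ sends any $\widehat D$-tangent curve to a $D$-tangent curve, so $P(gH)$ is a connected integral manifold of $D$ through $P(g)$; on the other hand, \emph{any} $P$-lift $\widetilde\gamma$ of a $D$-tangent curve $\gamma$ automatically satisfies $\dot{\widetilde\gamma}\in P_*^{-1}(D)=\widehat D$, so a curve running inside the leaf of $D$ through $P(g)$ lifts to a curve inside the leaf $gH$ of $\widehat D$, forcing the $D$-leaf through $P(g)$ to be contained in $P(gH)$. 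Hence the $D$-leaf through $P(g)$ equals $P(gH)$, and letting $g$ vary exhausts all leaves. (Equivalently, $P(gH)=(gHg^{-1})\cdot P(g)$ is a single orbit of the subgroup with Lie algebra $\operatorname{Ad}_g\mathfrak h$.)

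For part~2), differentiating $t\mapsto \exp(t\xi)\cdot P(g)=P\big(g\exp(t\operatorname{Ad}_{g^{-1}}\xi)\big)$ at $t=0$ yields
\begin{gather*}
X_\xi(P(g))=P_{*,g}L_{g,*}\big(\operatorname{Ad}_{g^{-1}}\xi\big).
\end{gather*}
Since $P_{*,g}L_{g,*}$ is onto with kernel $\mathfrak k\subset\mathfrak h$, we obtain $X_\xi(P(g))\in D_{P(g)}=P_{*,g}L_{g,*}(\mathfrak h)$ if and only if $\operatorname{Ad}_{g^{-1}}\xi\in\mathfrak h$, i.e.\ $\xi\in\operatorname{Ad}_g\mathfrak h$. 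Because $\mathfrak h$ is a subalgebra and $H$ is connected, $\mathfrak h$ is $\operatorname{Ad}(H)$-invariant, so $\operatorname{Ad}_{gh}\mathfrak h=\operatorname{Ad}_g\mathfrak h$ for every $h\in H$ and the condition is the same at every point $P(gh)$ of the leaf $P(gH)$; thus $X_\xi$ is tangent to $D$ at one point of the leaf precisely when it is tangent along the whole leaf, which is the statement. This lemma is essentially bookkeeping once Lemma~\ref{L3} is available; the one step deserving care — which I regard as the only genuine obstacle — is the path-lifting remark in part~1), namely that every $P$-lift of a $D$-tangent curve is $\widehat D$-tangent, since it is exactly this that prevents $P(gH)$ from being merely an open piece of a larger leaf. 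All arguments go through verbatim in the complex analytic case.
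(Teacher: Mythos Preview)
Your proof is correct and follows essentially the same route as the paper: lift $D$ to the left-invariant distribution $\widehat D$ on $G$, identify its leaves as the cosets $gH$, and push them down via $P$; for part~2) the paper uses the right-invariant vector field $\xi_R$ with $X_\xi=P_*\xi_R$ and checks $\xi_R(gh)\in L_{gh,*}\mathfrak h\iff\xi\in\operatorname{Ad}_{gh}\mathfrak h=\operatorname{Ad}_g\mathfrak h$, which is exactly your computation $X_\xi(P(g))=P_{*,g}L_{g,*}(\operatorname{Ad}_{g^{-1}}\xi)$ rewritten. Your path-lifting remark in part~1) is a welcome elaboration of what the paper dismisses in one sentence (``the leaves of the corresponding foliations are projected on each other''), and your observation that $H$ must be taken connected for the leaf description to hold is the right caveat.
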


\begin{proof} Consider the integrable distribution $\widehat D$ built in the proof of Lemma~\ref{L3}. Then it is easy to see that the foliation tangent to~$\widehat D$ coincides with the foliation of the left cosets $gH$, $g\in G$. Since $P_*\big(\widehat D\big)=D$, the leaves of the corresponding foliations are projected on each other by means of~$P$, which proves item~1.

The right invariant vector field $\xi_R $ on $G$, $\xi_R|_e=\xi$, is
 tangent to $gH$ at the point $gh\in gH$ if and only if
$\xi_R ( gh ) \in T_{gh} ( gH) \Leftrightarrow R_{gh,*}(\xi) \in T_{gh} (gH) = L_{gh,*} \mathfrak{h} \Leftrightarrow \xi \in R_{(gh)^{-1},*} L_{gh,*} \mathfrak{h} = \operatorname{Ad}_{gh} \mathfrak {h}=\operatorname{Ad}_g\mathfrak{h} $ (here $\xi\in\mathfrak{g}\cong T_eG$). Hence $X_\xi=P_*\xi_R$ is tangent to $P(gH)$ if and only if $\xi \in \operatorname{Ad}_g \mathfrak{h}$.
\end{proof}

\begin{Lemma}\label{L4} Let $N\colon TM\to TM$ be a semisimple $(1,1)$-tensor with constant distinct eigenvalues $\lambda_1,\dots,\lambda_s \in\mathbb{C}$ $($or $\lambda_1,\dots,\lambda_s \in\mathbb{R})$ and let $D_i \subset T^\mathbb{C} M$ $($or, respectively $D_i \subset T M)$ be the eigendistribution corresponding to~$\lambda_i$. Then $T_N=0$ if and only if the distributions $D_i$ and $D_i+D_j$ are involutive for any~$i$,~$j$.
\end{Lemma}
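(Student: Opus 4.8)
The plan is to characterize the vanishing of $T_N$ in terms of how $N$ interacts with commutators of vector fields lying in its eigendistributions. Since $N$ is semisimple with constant eigenvalues, we have a direct sum decomposition $T^\mathbb{C}M = \bigoplus_{i=1}^s D_i$ into $N$-eigenbundles, each $D_i$ a genuine vector subbundle (constancy of eigenvalues guarantees the ranks do not jump), and every complex vector field decomposes accordingly. So it suffices to evaluate $T_N(X,Y)$ on pairs $X \in \Gamma(D_i)$, $Y \in \Gamma(D_j)$ and use bilinearity. Throughout I will identify $N$ with the scalar $\lambda_i$ on $D_i$.

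First I would compute $T_N(X,Y)$ for $X \in \Gamma(D_i)$, $Y \in \Gamma(D_j)$ directly from the definition. Writing $NX = \lambda_i X$ and $NY = \lambda_j Y$ one gets
\begin{gather*}
T_N(X,Y) = \lambda_i\lambda_j[X,Y] - N\big(\lambda_j[X,Y] + \lambda_i[X,Y] - N[X,Y]\big) = (\lambda_i\lambda_j\,\mathrm{id} - (\lambda_i+\lambda_j)N + N^2)[X,Y].
\end{gather*}
Thus if we decompose $[X,Y] = \sum_k Z_k$ with $Z_k \in \Gamma(D_k)$, then $T_N(X,Y) = \sum_k (\lambda_i - \lambda_k)(\lambda_j - \lambda_k) Z_k$. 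This is the key computation, and everything else is bookkeeping. The case $i = j$: $T_N(X,Y) = \sum_{k\neq i}(\lambda_i-\lambda_k)^2 Z_k$, which vanishes for all $X,Y \in \Gamma(D_i)$ if and only if $[X,Y] \in \Gamma(D_i)$ always, i.e., $D_i$ is involutive. The case $i \neq j$: here the coefficient $(\lambda_i-\lambda_k)(\lambda_j-\lambda_k)$ vanishes precisely when $k = i$ or $k = j$, so $T_N(X,Y) = \sum_{k \neq i,j}(\lambda_i-\lambda_k)(\lambda_j-\lambda_k)Z_k$, and since the eigenvalues are distinct, this vanishes for all such $X,Y$ iff $[X,Y] \in \Gamma(D_i + D_j)$ always.

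For the "only if" direction I would run these computations in reverse: assuming $T_N = 0$ identically, restricting to $X,Y \in \Gamma(D_i)$ forces $(\lambda_i-\lambda_k)^2 Z_k = 0$ for all $k$, hence $Z_k = 0$ for $k \neq i$, so $D_i$ is involutive; restricting to $X \in \Gamma(D_i)$, $Y \in \Gamma(D_j)$ with $i \neq j$ forces $Z_k = 0$ for $k \neq i,j$, so $D_i + D_j$ is involutive (note $D_i + D_j = D_i \oplus D_j$ is the relevant eigenbundle sum, and one also uses the already-established involutivity of $D_i$ and $D_j$ to handle the $[X,Y]$ with both arguments in $D_i$ or both in $D_j$, though actually $T_N$ bilinearity handles all mixed pairs automatically). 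For the "if" direction, assume all $D_i$ and $D_i + D_j$ involutive; then for any $X,Y$, decompose into eigencomponents, apply bilinearity, and observe each term $T_N(X_i, Y_j)$ vanishes by the coefficient computation together with the involutivity hypothesis. In the real case with real eigenvalues everything goes through verbatim with $T^\mathbb{C}M$ replaced by $TM$.

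The only genuine subtlety — and the main "obstacle," though it is mild — is making sure the pointwise eigendecomposition $T^\mathbb{C}M = \bigoplus D_i$ globalizes to a decomposition into smooth (analytic) subbundles and that every section decomposes into smooth sections; this is where constancy and distinctness of the eigenvalues is essential, since then the spectral projectors $\pi_i = \prod_{k\neq i}\frac{N - \lambda_k\,\mathrm{id}}{\lambda_i - \lambda_k}$ are globally defined $(1,1)$-tensors, hence smooth, and $Z_k = \pi_k[X,Y]$ is a smooth section of $D_k$. With that in hand the argument is purely algebraic at each point. I would state this reduction explicitly at the start of the proof and then carry out the two coefficient computations above.
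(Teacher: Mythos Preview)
Your proof is correct. The ``if'' direction (involutivity $\Rightarrow T_N=0$) is essentially the same as the paper's: bilinearity reduces to eigensections, and a direct computation shows each term vanishes. The difference lies in the ``only if'' direction. You reuse the same identity $T_N(X,Y)=(N-\lambda_i\operatorname{Id})(N-\lambda_j\operatorname{Id})[X,Y]$ and read off the vanishing of the $Z_k$-components from the nonvanishing of the coefficients $(\lambda_i-\lambda_k)(\lambda_j-\lambda_k)$. The paper instead observes that $T_{N-\lambda I}=T_N$ for any scalar $\lambda$, so each image $\operatorname{im}(N-\lambda_k I)=\bigoplus_{j\neq k}D_j$ is involutive, and then realizes $D_i$ and $D_i+D_j$ as intersections of these images (intersections of involutive distributions being trivially involutive). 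Your approach is more symmetric---one computation handles both directions---while the paper's argument for the forward implication is a bit more conceptual and highlights the shift-invariance of the Nijenhuis torsion; both are short and neither is clearly superior.
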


\begin{proof} Assume $N$ is Nijenhuis. It is easy to see that $T_{N-\lambda I}=T_N=0$ for any $\lambda \in \mathbb{C} $. In particular $\big[\big(N^\mathbb{C}-\lambda_i I\big)X,\big(N^\mathbb{C}-\lambda_i I\big)Y\big]=\big(N^\mathbb{C}-\lambda_i I\big)[X,Y]_{N^\mathbb{C}-\lambda_i I}$ for any vector fields~$X$,~$Y$ and the image of $N^\mathbb{C}-\lambda_i I\colon T^\mathbb{C} M\to T^\mathbb{C} M$ is an integrable distribution. As a consequence, $D_i=\bigcap_{k\not=i}\operatorname{im}\big(N^\mathbb{C}-\lambda_k I\big)$ and $D_i+D_j=\bigcap_{k\not=i,j}\operatorname{im}\big(N^\mathbb{C}-\lambda_k I\big)$ are integrable.

Now, let the decomposition $T^\mathbb{C} M= D_1\oplus \cdots \oplus D_s$ be such that $D_{i}+D_{j}$ are integrable for any~$i$,~$j$. By the bilinearity of Nijenhuis torsion tensor it is enough to prove that $T_N (x,y)=0$ for $x\in \Gamma(D_i)$, $y\in \Gamma(D_j)$, $1\le i,j\le n$:
\begin{gather*}
T_N(x,y)= [Nx,Ny]-N([Nx,y]+[x,Ny])+N^2[x,y] \\
\hphantom{T_N(x,y)}{} =\lambda_i\lambda_j([x,y]_i+[x,y]_j)-N(\lambda_i ([x,y]_i+[x,y]_j)\\
\hphantom{T_N(x,y)=}{} + \lambda_j([x,y]_i+[x,y]_j))+N(\lambda_i[x,y]_i+\lambda_j[x,y]_j) \\
\hphantom{T_N(x,y)}{} = \lambda_i\lambda_j([x,y]_i+[x,y]_j)-\big(\lambda_i^2[x,y]_i
+\lambda_i\lambda_j[x,y]_j+\lambda_i\lambda_j[x,y]_i+\lambda_j^2[x,y]_j\big)\\
\hphantom{T_N(x,y)=}{} +(\lambda_i^2[x,y]_i+\lambda_j^2[x,y]_j) = 0
\end{gather*}
(here we denote by $[x,y]_i$ the $i$-th component of the element $[x,y]$ with respect to the decomposition above). The proof in the case of real eigenvalues is analogous.
\end{proof}

Let $G$ be any Lie group and $\mathfrak{g}=\operatorname{Lie}(G)$ its Lie algebra, $K$ a closed Lie subgroup of $G$ and $\mathfrak{k}=\operatorname{Lie}(K)$.

\begin{Theorem}\label{T1}There is a one-to-one correspondence between
\begin{itemize}\itemsep=0pt
\item[$(i)$] $ G-$invariant semisimple Nijenhuis $(1,1)$-tensors $N\colon T(G/K) \to T(G/K) $
with the spectrum
$\{ \lambda_1, \dots, \lambda_s \} $, where $\lambda_i$ are distinct, $\lambda_1,\dots,\lambda_{2p} \in \mathbb{C}$, $\lambda_i=\overline{\lambda_{i+p}}$ for $i=1,\dots,p$ and $\lambda_{2p+1},\dots,\lambda_s\in\mathbb{R}$,
\end{itemize}
and
\begin{itemize}\itemsep=0pt
\item[$(ii)$] decompositions $\mathfrak{g}^\mathbb{C} = \mathfrak{g}_1 + \dots+ \mathfrak{g}_s $ of $\mathfrak{g}^\mathbb{C}$ to the sum of subspaces such that:
\begin{enumerate}\itemsep=0pt
\item[$1)$] $ \forall_{i,j\in \{1,\dots,s\}, i\not=j }$ $\mathfrak{g}_i\cap\mathfrak{g}_j = \mathfrak{k}^\mathbb{C}$;
\item[$2)$] the induced decomposition of the factor space $\mathfrak{g}^\mathbb{C}/\mathfrak{k}^\mathbb{C}$ is direct: $\mathfrak{g}^\mathbb{C}/\mathfrak{k}^\mathbb{C}=\big(\mathfrak{g}_1/\mathfrak{k}^\mathbb{C}\big) \oplus \cdots \oplus\big(\mathfrak{g}_s/\mathfrak{k}^\mathbb{C}\big)$;
\item[$3)$] $ \forall_{i,j\in \{1,\dots,s\} }$ $\mathfrak{g}_i+\mathfrak{g}_j $ are Lie subalgebras in $\mathfrak{g}^\mathbb{C}$;
\item[$4)$] $\mathfrak{g}_i=\overline{\mathfrak{g}_{i+p}}$ for $i=1,\dots,p$ and $\mathfrak{g}_j=\overline{\mathfrak{g}_j}$ for $j=2p+1,\dots,s$.
\end{enumerate}
\end{itemize}
The decomposition $(ii)$ induces the decomposition $T^\mathbb{C}(G/K)=D_1\oplus\cdots\oplus D_s$ to involutive subbundles, the corresponding $(1,1)$-tensor $N$ is then given by $N|_{D_i}=\lambda_i\operatorname{Id}_{D_i}$ and, vice versa, given~$N$ as in~$(i)$ one constructs the decomposition $(ii)$ by the decomposition $T^\mathbb{C}(G/K)=D_1\oplus\cdots\oplus D_s$ of $T^\mathbb{C}(G/K)$ to the eigendistributions of~$N$.
\end{Theorem}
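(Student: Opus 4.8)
The plan is to establish the correspondence by passing through the eigendistributions of $N$ and invoking the three preceding lemmas, which between them already carry most of the work. Given an object of type $(i)$, that is a $G$-invariant semisimple Nijenhuis $(1,1)$-tensor $N$ with the prescribed spectrum, I would complexify to get $N^\mathbb{C}\colon T^\mathbb{C}(G/K)\to T^\mathbb{C}(G/K)$, which by Lemma~\ref{L1} has constant eigenvalues, and form the eigendistribution decomposition $T^\mathbb{C}(G/K)=D_1\oplus\cdots\oplus D_s$. Each $D_i$ is $G$-invariant because $N$ is, and by Lemma~\ref{L4} the vanishing of $T_N$ is equivalent to involutivity of all $D_i$ and all $D_i+D_j$. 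Then Lemma~\ref{L3} translates each $G$-invariant involutive distribution into a subalgebra of $\mathfrak{g}^\mathbb{C}$ containing $\mathfrak{k}^\mathbb{C}$: set $\mathfrak{g}_i$ to be the subalgebra corresponding to $D_i+D_j$ summed appropriately — more precisely, note $D_i=\widehat D_i{}_e$ lifted, but $D_i$ alone corresponds to an $\operatorname{ad}\mathfrak{k}$-invariant subspace $d_i$ with $\mathfrak{k}^\mathbb{C}\subset d_i$, and one defines $\mathfrak{g}_i$ via the involutive distribution $D_i+$(nothing)... here I must be careful: $\mathfrak{g}_i$ should be chosen so that $\mathfrak{g}_i/\mathfrak{k}^\mathbb{C}$ corresponds to $D_i$ and $\mathfrak{g}_i+\mathfrak{g}_j$ corresponds to $D_i+D_j$, and this forces $\mathfrak{g}_i:=\mathfrak{k}^\mathbb{C}\oplus d_i$ where $d_i\subset\mathfrak{m}^\mathbb{C}$ is the subspace with $P_*d_i=D_i$ at the base point; involutivity of $D_i$ gives that $\mathfrak{g}_i$ is a subalgebra and involutivity of $D_i+D_j$ gives that $\mathfrak{g}_i+\mathfrak{g}_j$ is a subalgebra, which is property~3).

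Conditions 1) and 2) of $(ii)$ then follow from the directness of the eigenspace decomposition: since $T^\mathbb{C}(G/K)=\bigoplus_i D_i$ and each $D_i$ corresponds to $\mathfrak{g}_i/\mathfrak{k}^\mathbb{C}$, the sum of the $\mathfrak{g}_i/\mathfrak{k}^\mathbb{C}$ is direct and equals $\mathfrak{g}^\mathbb{C}/\mathfrak{k}^\mathbb{C}$, which is exactly 2); and directness together with $\mathfrak{k}^\mathbb{C}\subset\mathfrak{g}_i$ for all $i$ forces $\mathfrak{g}_i\cap\mathfrak{g}_j=\mathfrak{k}^\mathbb{C}$ for $i\neq j$, which is 1). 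Condition 4), concerning the reality pattern $\mathfrak{g}_i=\overline{\mathfrak{g}_{i+p}}$ and $\mathfrak{g}_j=\overline{\mathfrak{g}_j}$, follows from the last assertion of Lemma~\ref{L3}: complex conjugation on $T^\mathbb{C}(G/K)$ sends the $\lambda_i$-eigendistribution to the $\overline{\lambda_i}$-eigendistribution of $N^\mathbb{C}$ (because $N$ is real, $\overline{N^\mathbb{C}X}=N^\mathbb{C}\overline X$), so $\overline{D_i}=D_{i+p}$ for $i\le p$ and $\overline{D_j}=D_j$ for $j>2p$, and this reality is inherited by the corresponding subspaces of $\mathfrak{g}^\mathbb{C}$.

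For the converse direction, starting from a decomposition $(ii)$, I would use Lemma~\ref{L3} in reverse: each subalgebra $\mathfrak{g}_i\supset\mathfrak{k}^\mathbb{C}$ with $[\mathfrak{k}^\mathbb{C},\mathfrak{g}_i]\subset\mathfrak{g}_i$ produces a $G$-invariant involutive distribution $\widetilde D_i\subset T^\mathbb{C}(G/K)$, and I would set $D_i$ to be the subspace of $\widetilde D_i$ complementary to $\mathfrak{k}^\mathbb{C}$'s image — concretely $D_i$ is the invariant distribution attached to the $\operatorname{ad}\mathfrak{k}$-invariant subspace $d_i$ with $\mathfrak{g}_i=\mathfrak{k}^\mathbb{C}\oplus d_i$ (this decomposition existing because... hmm, one needs an $\operatorname{ad}\mathfrak{k}$-invariant complement, which exists in the compact/reductive setting but in general one works with $\mathfrak{g}_i/\mathfrak{k}^\mathbb{C}$ as an $\operatorname{ad}\mathfrak{k}$-module and its image under $P_*$). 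Property 2) guarantees $T^\mathbb{C}(G/K)=D_1\oplus\cdots\oplus D_s$ is a direct sum, so one may define $N$ by declaring $N|_{D_i}=\lambda_i\operatorname{Id}_{D_i}$; it is $G$-invariant since each $D_i$ is, it is semisimple with the prescribed spectrum by construction, property 4) ensures $N$ is the complexification of a real tensor (the conjugation-symmetry of the eigendata makes $N^\mathbb{C}$ commute with conjugation), properties 3) and 2) give involutivity of every $D_i$ and every $D_i+D_j$, and then Lemma~\ref{L4} yields $T_N=0$. Finally I would remark that the two constructions are mutually inverse, since recovering the eigendistributions of the $N$ built from $(ii)$ returns the $D_i$ one started from, and vice versa. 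The main obstacle I anticipate is the bookkeeping around the passage between a distribution $D_i$ (which does not contain the "vertical" directions) and its associated subalgebra $\mathfrak{g}_i$ (which does contain $\mathfrak{k}^\mathbb{C}$): one must consistently either work with the enlarged distributions $\widehat D_i=P_*^{-1}(D_i)$ on $G$ as in Lemma~\ref{L3}, or equivalently with the quotient modules $\mathfrak{g}_i/\mathfrak{k}^\mathbb{C}$, and verify that sums, intersections, and involutivity all behave correctly under this correspondence — in particular that $\mathfrak{g}_i+\mathfrak{g}_j$ being a subalgebra is genuinely equivalent to $D_i+D_j$ being involutive, not merely implied in one direction.
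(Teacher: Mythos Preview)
Your approach is essentially the same as the paper's: eigendistributions of $N^\mathbb{C}$ via Lemma~\ref{L4}, translation to subalgebras via Lemma~\ref{L3}, reality via the last clause of Lemma~\ref{L3}, and then the reverse direction by running the equivalences backwards.

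The bookkeeping worry you raise at the end is real but dissolves more cleanly than you expect, and the paper simply sidesteps it. In Lemma~\ref{L3} the subspace $d\subset\mathfrak{g}^\mathbb{C}$ attached to a $G$-invariant distribution $D\subset T^\mathbb{C}(G/K)$ already \emph{contains} $\mathfrak{k}^\mathbb{C}$ (it is $\widehat D_e=P_*^{-1}(D)|_e$), so one sets $\mathfrak{g}_i:=d_i$ directly; there is no need to split $\mathfrak{g}_i=\mathfrak{k}^\mathbb{C}\oplus(\text{complement})$ or to invoke reductivity. Because the assignment $D\mapsto P_*^{-1}(D)|_e$ is additive and $\ker P_*$ corresponds to $\mathfrak{k}^\mathbb{C}$, one gets immediately that $D_i+D_j$ corresponds to $\mathfrak{g}_i+\mathfrak{g}_j$ and $D_i\cap D_j=0$ corresponds to $\mathfrak{g}_i\cap\mathfrak{g}_j=\mathfrak{k}^\mathbb{C}$; this handles conditions 1)--3) without the detour through an $\operatorname{ad}\mathfrak{k}$-invariant complement in $\mathfrak{m}^\mathbb{C}$ that you were reaching for. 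In the converse direction the same observation applies: the distribution $D_i$ is just $P_*$ of the left-invariant distribution generated by $\mathfrak{g}_i$, and directness of $\bigoplus D_i$ is exactly condition~2) on $\mathfrak{g}^\mathbb{C}/\mathfrak{k}^\mathbb{C}$.
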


\begin{proof}Let $N$ be an $ G$-invariant semisimple Nijenhuis $(1,1)$-tensor on $G/K$ with the spectrum $\{ \lambda_1, \dots, \lambda_s ; \, \lambda_i \in \mathbb{C} , \, \lambda_i \neq \lambda_j ,\text{ for } i\neq j \} $. From Lemma~\ref{L4} it follows that there is a decomposition $ T^\mathbb{C} (G/K) = D_1 \oplus\cdots \oplus D_s $ into integrable distributions, which, as the eigenspaces of an $G$-invariant tensor, are also $G$-invariant. By Lemma~\ref{L3} there is a one-to-one correspondence between $G$-invariant distributions~$D_i$ and subalgebras $\mathfrak{g}_i$ containing $\mathfrak{k}^\mathbb{C}$, hence there is a decomposition of $\mathfrak{g}^\mathbb{C}=\mathfrak{g}_1 + \cdots +\mathfrak{g}_s $, such that $\mathfrak{g}_i\cap\mathfrak{g}_j = \mathfrak{k}^\mathbb{C} $ for any $i\not=j$. Applying Lemma~\ref{L3} to the sum of distributions $D_i + D_j$ we see that it is involutive if and only if $\mathfrak{g}_i+\mathfrak{g}_j$ is a subalgebra.

Item 3 follows from the last assertion of Lemma \ref{L3} and from the obvious fact that $D_i=\overline{D_{i+p}}$ for $i=1,\dots,p$ and $D_j=\overline{D_j}$ for $j=2p+1,\dots,s$.

The proof in reverse direction follows the same argumentation with the use of the equivalences in lemmas cited.
\end{proof}

Below we present some examples for which decompositions of Lie algebras mentioned in Theorem~\ref{T1} are given explicitly.

First series of examples come from semisimple algebraic Nijenhuis operators $n\colon \mathfrak{g} \to \mathfrak{g} $, which are $\operatorname{ad} \mathfrak{k} $-invariant for some Lie subalgebra $\mathfrak{k}\subset \mathfrak{g}$, i.e., $ n \circ \operatorname{ad} k = \operatorname{ad} k \circ n $ for all $ {k\in \mathfrak{k}}$. Then by $\operatorname{ad} \mathfrak{k}$-invariance we can extend it to an invariant Nijenhuis $(1,1)$-tensor $N$ on~$G/K$.

In the literature the following two classes of algebraic Nijenhuis operators are widely known~\cite{golsok2000,mks,pBi-Lie}:\footnote{The is one more class defined on the full matrix algebra by $nX=AXB+BAX$, where $A^2=B^2=I$ \cite{odsokIntMatrEq}. For some particular cases of the matrices $A$ and $B$ the corresponding operator is semisimple. However these cases are beyond the scope of this paper.} first is related to a direct decomposition of the algebra~$\mathfrak{g}$ to two subalgebras, second is related to the operator of left multiplication on the full matrix algebra. Below we consider particular cases of these two classes.

\begin{Example}\label{ex1} Let $\mathfrak{g}$ be a semisimple Lie algebra with the root system $R$ with respect to a~Cartan subalgebra $\mathfrak{h} \subset\mathfrak{g}$. Let $\mathfrak{g}=\mathfrak{h}+ \sum\limits_{\alpha\in R}\mathfrak{g}_\alpha$ be the corresponding root decomposition. Choose $R^+$ and $R^-$ to be sets of positive and negative roots and let $S\subset \Pi$ be any subset of the set of positive simple roots. We denote by~$[S]$ the set of positive roots generated by~$S$. Consider the decomposition $\mathfrak{g} = \mathfrak{p} \oplus \mathfrak{p}^{\perp} $, where $\mathfrak{p}:=\mathfrak{h}+\sum\limits_{\alpha \in R^-}\mathfrak{g}_\alpha+\sum\limits_{\alpha \in [S]}\mathfrak{g}_\alpha$ is the corresponding parabolic subalgebra and $\mathfrak{p} ^{\perp}=\sum\limits_{\alpha \in R^+\setminus [S]}\mathfrak{g}_\alpha$ (the orthogonal complement with respect to Killing form). Then $\mathfrak{p} ^{\perp}$ is obviously a subalgebra too. The operator $n\colon \mathfrak{g}\to\mathfrak{g}$ defined by $ n|_{\mathfrak{g}_1 } = \lambda_1 \operatorname{Id}_{\mathfrak{g}_1}$, $n|_{\mathfrak{g}_2 } = \lambda_2 \operatorname{Id}|_{\mathfrak{g}_2}$ with $\mathfrak{g}_1=\mathfrak{p}$ and $\mathfrak{g}_2=\mathfrak{p}^\bot$ and with arbitrary $\lambda_1$, $\lambda_2 $ is algebraic Nijenhuis (cf.~\cite{golsok2000,p5}).

One may take $\mathfrak{k} = \mathfrak{p} \cap \mathfrak{p}^{\text{opposite}} $, where $ \mathfrak{p}^{\text{opposite}}= \mathfrak{h}+ \sum\limits_{\alpha \in -[S]\subset R^-}\mathfrak{g}_\alpha+\sum\limits_{\alpha \in R^+}\mathfrak{g}_\alpha$. Then the operator $n$ will be $\operatorname{ad}\mathfrak{k}$-invariant and will generate an $G$-invariant Nijenhuis $(1,1)$-tensor on~$G/K$, where $G$, $K \subset G$ are the corresponding Lie groups. The decomposition of Theorem \ref{T1} looks as follows: $\mathfrak{g}_1:=\mathfrak{p}, \mathfrak{g}_2:=\mathfrak{p}^{\text{opposite}}$.
An instance of such a situation for $\mathfrak{g}=\mathfrak{sl}(3,\mathbb{R})$ can be schematically presented as
\begin{gather*}
\mathfrak{p}=\begin{bmatrix}*&0&0\\ *&*&* \\ *&*&* \end{bmatrix}, \qquad
\mathfrak{p}^\perp = \begin{bmatrix}0&*&*\\ 0&0&0 \\ 0&0&0\end{bmatrix}, \qquad
\mathfrak{p}^{\text{opposite}} = \begin{bmatrix}*&*&*\\ 0&*&*\\ 0&*&*\end{bmatrix}, \qquad
\mathfrak{k} = \begin{bmatrix}*&0&0\\ 0&*&*\\ 0&*&*\end{bmatrix},
\end{gather*}
where the corresponding set $S$ consists of the sole root $e_2-e_3$, $e_i(H)$ being the $i$-th diagonal element of $H\in\mathfrak{h}$.
\end{Example}

\begin{Example}\label{1b}Let $\mathfrak{g}=\mathfrak{gl}(n,\mathbb{K})$, $\mathbb{K}=\mathbb{R},\mathbb{C}$, and consider $n=L_A$, the operator of left multiplication by a matrix $A\in\mathfrak{g}$. Then it is easy to see that $n$ is an algebraic Nijenhuis operator. Taking $A=\operatorname{diag}(\lambda_1, \dots , \lambda_n)$, $\lambda_i\not=\lambda_j$, $i\not=j$, we get a semisimple operator, whose eigenspaces $\ker(n-\lambda_i\operatorname{Id})$ consist of matrices with the only nonzero $i$-th row. Obviously, $n$ is $\operatorname{ad} \mathfrak{k}$-invariant for $\mathfrak{k}= Z(A)$, the centralizer of $A$, which coincides with the
 subalgebra of diagonal matrices. The decomposition of Theorem~\ref{T1} is $\mathfrak{g}=\sum\limits_{i=1}^n\mathfrak{g}_i$, where $\mathfrak{g}_i=\ker(n-\lambda_i\operatorname{Id})+\mathfrak{k}$ consists of the matrices having non zero elements at most on the diagonal and $i$-th row.

The generalization to the case when multiplicities in the spectrum of $A$ are admitted is straightforward. This example has also an obvious generalization to the case $\mathfrak{g}=\mathfrak{sl}(n,\mathbb{K})$.
\end{Example}

Our next example is quite classical, as this is the complex structure operator on the adjoint orbits of the compact Lie groups which was intensively studied in the literature. We adapt the description of this operator to our notations. An alternative description can be found in \cite[Chapter~8.B]{besse}.

\begin{Example}\label{exa2}Let $\mathfrak{g}$ be a complex semisimple Lie algebra, $\mathfrak{h} \subset\mathfrak{g}$ a Cartan subalgebra, $\mathfrak{g}=\mathfrak{h}+\sum\limits_{\alpha\in R}\mathfrak{g}_\alpha$ the corresponding root grading. For any $\alpha\in R$ choose $E_\alpha\in\mathfrak{g}_\alpha$ such that $\langle E_\alpha,E_{-\alpha}\rangle=1$ and put $H_\alpha:=[E_\alpha,E_{-\alpha}]$. Then $\mathfrak{u}=\sum\limits_{\alpha\in R^+}\mathbb{R}({\rm i} H_\alpha)+\sum\limits_{\alpha\in R^+}\mathbb{R}(E_\alpha-E_{-\alpha})+\sum\limits_{\alpha\in R^+}\mathbb{R}({\rm i}(E_\alpha+E_{-\alpha})) \subset\mathfrak{g}$, where $R^+ \subset R$ is a subset of positive roots, is the compact real form of $\mathfrak{g}$ \cite[Theorem~6.3, Chapter~III]{helgason}. By \cite[Theorem~1.3, Chapter~6]{vinbergOnishIII} the centralizer $Z_\mathfrak{u}(a)$ of any element $a\in\mathfrak{u}$ (which is necessarily semisimple) is of the form $Z_\mathfrak{u}(a)=\sum\limits_{\alpha\in R^+}\mathbb{R}({\rm i} H_\alpha)+\sum\limits_{\alpha\in [S]}\mathbb{R}(E_\alpha-E_{-\alpha})+\sum\limits_{\alpha\in [S]}\mathbb{R}({\rm i}(E_\alpha+E_{-\alpha}))$, where $S \subset R^+$ is a subset of the set of simple positive roots (cf.\ Example~\ref{ex1}). Consider the operator $j\colon \mathfrak{u}^\perp\to \mathfrak{u}^\perp$, where $\mathfrak{u}^\perp:=\sum\limits_{\alpha\in R^+\setminus[S]}\mathbb{R}(E_\alpha-E_{-\alpha})+\sum\limits_{\alpha\in R^+\setminus[S]}\mathbb{R}({\rm i}(E_\alpha+E_{-\alpha}))$, given by $ j(E_\alpha - E_{-\alpha} ) = {\rm i} ( E_\alpha + E_{-\alpha})$, $j( {\rm i} (E_\alpha + E_{-\alpha} ))=- ( E_\alpha - E_{-\alpha})$. Note that $j^\mathbb{C}(E_\alpha)={\rm i}E_\alpha$, $j^\mathbb{C}(E_{-\alpha})=-{\rm i}E_{-\alpha}$. The eigenspaces $\mathfrak{g}_1':=\sum\limits_{\alpha\in R^+\setminus[S]}\mathbb{C}(E_\alpha)$ and $\mathfrak{g}_2':=\sum\limits_{\alpha\in R^+\setminus[S]}\mathbb{C}(E_{-\alpha})$ are subalgebras as well as the subspaces $\mathfrak{g}_i:=\mathfrak{g}_i'\oplus\mathfrak{k}^\mathbb{C}$, $\mathfrak{k}=Z_\mathfrak{u}(a)$. Hence by Theorem~\ref{T1} the operator $j$ induces an invariant integrable almost complex structure on~$U/K$, where $U,K \subset U$ are the Lie groups corresponding to the Lie algebras $\mathfrak{u}$, $\mathfrak{k}$. We conclude that, although this operator is not arising from an algebraic Nijenhuis operator, the corresponding decomposition in fact coincides with that from Example~\ref{ex1}).
\end{Example}

Now we come to a series of examples of different nature.\footnote{By this we mean that they are not necessarily related with an $\operatorname{ad}\mathfrak{k}$-invariant algebraic Nijenhuis operator on the Lie algebra $\mathfrak{g}$. For instance, in Example~\ref{exa4} there are two ways to build a compatible with the decomposition $\mathfrak{g}=\mathfrak{g}_1+\mathfrak{g}_2$ algebraic Nijenhuis operator $n\colon \mathfrak{g}\to\mathfrak{g}$: $n|_{\mathfrak{g}_1}=\lambda_1\operatorname{Id}_{\mathfrak{g}_1}$, $n|_{\mathfrak{g}_2'}=\lambda_2\operatorname{Id}_{\mathfrak{g}_2'}$ with $\mathfrak{g}_2'=\mathfrak{g}_1^\bot$, or $n|_{\mathfrak{g}_2}=\beta_2\operatorname{Id}_{\mathfrak{g}_2}$, $n|_{\mathfrak{g}_1'}=\beta_1\operatorname{Id}_{\mathfrak{g}_1'}$ with $\mathfrak{g}_1'=\mathfrak{g}_2^\bot$. However, in both the cases the operator in general will not be $\operatorname{ad}\mathfrak{k}$-invariant. Concerning the decompositions from the Onishchik list, see Example \ref{onish}, it seems that it is even impossible to build a compatible Nijenhuis operator for some of them.} The decomposition of Theorem~\ref{T1} will still consist of two components which now need not be symmetric with respect to the involution interchanging $\mathfrak{g}_\alpha$ and $\mathfrak{g}_{-\alpha}$. In other words, any decomposition $\mathfrak{g}=\mathfrak{g}_1+\mathfrak{g}_2$ of a Lie algebra $\mathfrak{g}$ to two subalgebras can be taken into consideration (with $\mathfrak{k}=\mathfrak{g}_1\cap\mathfrak{g}_2$). One of possible natural generalizations of Example \ref{ex1} is considering two ``nonsymmetric'' parabolic subalgebras. Their intersection is the so-called \emph{seaweed subalgebra}.

\begin{Example}\label{exa4}Let $\mathfrak{g}$ be a semisimple Lie algebra with the root system $R$ with respect to a~Cartan subalgebra $\mathfrak{h} \subset\mathfrak{g}$. Let $\mathfrak{g}=\mathfrak{h}+ \sum\limits_{\alpha\in R}\mathfrak{g}_\alpha$ be the corresponding root decomposition. Choose~$R^+$ and~$R^-$ to be sets of positive and negative roots and let $S,S'\subset \Pi$ be any subsets of the set of positive simple roots. Consider the parabolic subalgebras $\mathfrak{g}_1 = \mathfrak{h}+\sum\limits_{\alpha \in R^-}\mathfrak{g}_\alpha+\sum\limits_{\alpha \in [S]}\mathfrak{g}_\alpha$ and $\mathfrak{g}_2 = \mathfrak{h}+\sum\limits_{\alpha \in R^+}\mathfrak{g}_\alpha+\sum\limits_{\alpha \in -[S']}\mathfrak{g}_\alpha$. An instance of such a situation for $\mathfrak{g}=\mathfrak{sl}(3,\mathbb{R})$ can be schematically presented as
\begin{gather*}
\mathfrak{g}_1=\begin{bmatrix}*&0&0\\ *&*&* \\ *&*&* \end{bmatrix}, \qquad
\mathfrak{g}_2 = \begin{bmatrix}*&*&*\\ *&*&*\\ 0&0&*\end{bmatrix}, \qquad
\mathfrak{k} = \begin{bmatrix}*&0&0\\ *&*&*\\ 0&0&*\end{bmatrix},
\end{gather*}
where the corresponding sets $S$ and $S'$ consist of the roots $e_2-e_3$ and $e_1-e_2$ respectively, cf.\ Example~\ref{ex1}.
\end{Example}

In \cite{oni62} A.L.~Onishchik classified all decompositions $\mathfrak{g}=\mathfrak{g}_1 +\mathfrak{g}_2$ for compact simple Lie algebras~$\mathfrak{g}$ and we list them below. (In \cite{oni69} he also gave a classification of decompositions of reductive Lie algebras $\mathfrak{g}$ to two subalgebras reductive in $\mathfrak{g}$, but we omit this case here.)

\begin{Example}\label{onish} Let $\mathfrak{g}$ be a compact simple Lie algebra. The following table presents all pairs of subalgebras $(\mathfrak{g}_1,\mathfrak{g}_2)$ such that $\mathfrak{g}=\mathfrak{g}_1+\mathfrak{g}_2$ together with possible embeddings $i'\colon \mathfrak{g}_1\rightarrow\mathfrak{g}$, $i''\colon \mathfrak{g}_2\rightarrow\mathfrak{g}$ up to conjugations. Below~$N$ stands for the trivial representation, $\varphi_i$ for the specific representation mentioned in~\cite{oni62} and $T$ for the 1-dimensional Lie algebra.

\begin{table}[t]\centering
\begin{tabular}{|c|c|c|c|c|c|c|}
\hline
$\mathfrak{g}$ & $\mathfrak{g}_1$ & $i'$ & $\mathfrak{g}_2$ & $i''$ & $\mathfrak{k}=\mathfrak{g}_1\cap \mathfrak{g}_2$ & restrictions \\
\hline \hline
$A_{2n-1}$ & $C_n$ & $\varphi _1$ & $A_{2n-2}$ & $\varphi _1+N$ & $C_{n-1}$ & $n>1$\\
\hline
$A_{2n-1}$ & $C_n$ & $\varphi _1$ & $A_{2n-2}\oplus T$ & $\varphi _1+N$ & $C_{n-1}\oplus T$ & $n>1$\\
\hline
$B_3$ & $G_2$ & $\varphi _2$ & $B_2$ & $\varphi _1+2N$ & $A_{1}$ & \null \\
\hline
$B_3$ & $G_2$ & $\varphi _2$ & $B_2\oplus T$ & $\varphi _1+2N$ & $A_{1}\oplus T$ & \null \\
\hline
$B_3$ & $G_2$ & $\varphi _2$ & $D_3$ & $\varphi _1+N$ & $A_{2}$ & \null \\
\hline
$D_{n+1}$ & $B_n$ & $\varphi _1+N$ & $A_n$ & $\varphi _1+\varphi _n$ & $A_{n-1}$ & $n>2$\\
\hline
$D_{n+1}$ & $B_n$ & $\varphi _1+N$ & $A_n\oplus T$ & $\varphi _1+\varphi _n$ & $A_{n-1}\oplus T$ & $n>2$ \\
\hline
$D_{2n}$ & $B_{2n-1}$ & $\varphi _1+N$ & $C_n$ & $\varphi _1+\varphi _1$ & $C_{n-1}$ & $n>1$\\
\hline
$D_{2n}$ & $B_{2n-1}$ & $\varphi _1+N$ & $C_n\oplus T$ & $\varphi _1+\varphi _1$ & $C_{n-1}\oplus T$ & $n>1$ \\
\hline
$D_{2n}$ & $B_{2n-1}$ & $\varphi _1+N$ & $C_n\oplus A_1$ & $\varphi _1+\varphi _1$ & $C_{n-1}\oplus A_{1}$ & $n>1$ \\
\hline
$D_8$ & $B_7$ & $\varphi _1+N$ & $B_4$ & $\varphi _4$ & $B_{3}$ & \null \\
\hline
$D_{4}$ & $B_3$ & $\varphi _3$ & $B_2$ & $\varphi _1+3N$ & $A_{1}$ & \null \\
\hline
$D_{4}$ & $B_3$ & $\varphi _3$ & $B_2\oplus T$ & $\varphi _1+3N$ & $A_{1}\oplus T$ & \null\\
\hline
$D_{4}$ & $B_3$ & $\varphi _3$ & $B_2\oplus A_1$ & $\varphi _1+3N$ & $A_{1}\oplus A_{1}$ & \null \\
\hline
$D_{4}$ & $B_3$ & $\varphi _3$ & $D_3$ & $\varphi _1+2N$ & $A_{2}$ & \null \\
\hline
$D_{4}$ & $B_3$ & $\varphi _3$ & $D_3\oplus T$ & $\varphi _1+2N$ & $A_{2}\oplus T$ & \null \\
\hline
$D_{4}$ & $B_3$ & $\varphi _3$ & $B_3$ & $\varphi _1+N$ & $G_{2}$ & \null \\
\hline
\end{tabular}
\end{table}
\end{Example}

\section[Bi-Poisson structures, kroneckerity, $G$-invariance, and complete families of functions in involution]{Bi-Poisson structures, kroneckerity, $\boldsymbol{G}$-invariance,\\ and complete families of functions in involution}\label{sec:Bi-Poisson0}

If $M$ is a real or complex analytic manifold, ${\mathcal E}(M)$ will stand for the space of analytic functions on~$M$ in the corresponding category. We will write $\mathbb{K}$ for the corresponding ground field. We recall basic definitions and concepts related to bi-Poisson structures, their kroneckerity and invariance (cf.~\cite{p4}).

We will say that some functions from the set ${\mathcal E}(M)$ are independent at a point $x\in M$ if their differentials are independent at~$x$. For any subset ${\mathcal F}\subset{\mathcal E}(M)$ denote by $\operatorname{ddim}_x {\mathcal F}$ the maximal number of independent functions from the set ${\mathcal F}$ at a~point $x\in M$. Put $\operatorname{ddim} {\mathcal F}:= \max\limits_{x\in M}\operatorname{ddim}_x{\mathcal F}$.

\begin{Definition}\label{10.70}A bivector field (bivector for short) is a skew-symmetric morphism $\Pi\colon T^*M\to TM$. It is called \emph{Poisson} if the operation $\{f,g\}^\Pi:=\Pi(f)g$ is a Lie algebra on $\mathcal{E}(M)$ (here $\Pi(f):=\Pi(df)$ stands for the \emph{Hamiltonian vector field} corresponding to the function $f$). Define $\operatorname{rank} \Pi:=\max\limits_{x\in M}\dim\Pi(T_x^*M)$ and $R^\Pi:=\{x\in M\,|\, \dim\Pi(T_x^*M)=\operatorname{rank} \Pi\}$.
A function $f\in \mathcal{E}(U)$ over an open set $U\subset M$ is called a \emph{Casimir function} for a Poisson bivector $\Pi$ if $\Pi(f)\equiv 0$. The set of all Casimir functions for $\Pi$ over $U$ will be denoted by $Z^\Pi(U)$ (note that $Z^\Pi(U)$ is the centre of the Lie algebra $\big(\mathcal{E}(U),\{\,,\,\}^\Pi\big)$).
\end{Definition}

Given a poisson bivector $\Pi$, the generalized distribution $\Pi(T^*M)\subset TM$ (called the \emph{characteristic distribution} of~$\Pi$) is integrable, the restrictions of $\Pi$ to its leaves are correctly defined nondegenerate Poisson bivectors and the leaves are called the \emph{symplectic leaves} of~$\Pi$. In particular the set $R^\Pi$ is the union of all the symplectic leaves of maximal dimension.

\begin{Definition}\label{10.80}A set $I\subset \mathcal{E}(U)$ of functions over $U\subset M$ is called involutive with respect to a Poisson bivector $\Pi$ if $\{f,g\}^\Pi=0$ for any $f,g\in I$ (we also say that such functions are \emph{in involution}). An involutive set is \emph{complete} with respect to $\Pi$ if there exist $f_1,\dots,f_s\in I$, where $s=\dim M-\frac{1}{2}\operatorname{rank} \Pi$, independent at any point from some open dense set $U_0 \subset U$.
\end{Definition}

If $I$ is a complete involutive set over $U$, then among $f_i$ there are $\dim M-\operatorname{rank} \Pi$ Casimir functions of~$\Pi$. Any such set $I$ is a set of functions constant along a lagrangian foliation of dimension $\frac12\operatorname{rank} \Pi$ defined on an open dense set in any symplectic leaf of maximal dimension.

\begin{Definition}\label{de.1.1}Two Poisson structures $\Pi_1$ and $\Pi_2$ on a manifold $M$ is called \emph{compatible} if $\Pi_t := t_1\Pi_1+t_2\Pi_2$ is a Poisson bivector for any $t=(t_1,t_2)\in \mathbb{K}^2$; the whole 2-dimensional family of Poisson bivectors (in case $\Pi_{1,2}$ are linearly independent) $\{\Pi_t\}_{t\in\mathbb{R}^2}$ is called a {\it bi-Poisson} or {\it a~bi-Hamiltonian structure}.
\end{Definition}

\begin{Definition}\label{de.1.2}A bi-Poisson structure $\{\Pi_t\}$ on $M$ is {\it Kronecker at a point} $x\in M$ if $\operatorname{rank}_\mathbb{C} (t_1\Pi_1+t_2\Pi_2)|_x$ is constant with respect to $(t_1,t_2)\in\mathbb{C}^2\setminus\{0\}$ (in the real analytic case we consider $(\Pi_j)_x$ as a skew-symmetric bilinear form on the complexified cotangent space $(T^*_xM)^\mathbb{C}$). We say that~$\{\Pi_t\}$ is \emph{Kronecker} if it is Kronecker at any point of some open dense subset in $M$.
\end{Definition}

Importance of this notion is explained by the following

\begin{Theorem}\label{prop}Let $\{\Pi_t\}$ be a Kronecker bi-Poisson structure on $M$. Then for any open set $U \subset M$ such that $\operatorname{ddim} Z^{\Pi_t}(U)=\dim M-\operatorname{rank} \Pi_t$ for any $t$ the set
\begin{gather*}
Z^{\{\Pi_t\}}(U):=\operatorname{Span}\bigg(\bigcup_{t\not=0}Z^{\Pi_t}(U)\bigg)
\end{gather*}
is a complete involutive set of functions with respect to any $\Pi_t\not =0$ $($see Definition~{\rm \ref{10.80})}.
\end{Theorem}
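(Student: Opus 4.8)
The plan is to exploit the well-known local structure theory of Poisson pencils of constant (Kronecker) type, in the spirit of the Gelfand--Zakharevich splitting theorem, together with the standard argument that relates Casimirs of a Kronecker pencil to a Lagrangian foliation. First I would pass to an open dense subset $M_0\subset M$ on which $\{\Pi_t\}$ is Kronecker, the rank of $\Pi_t$ is independent of $t$ (for $t\ne 0$ in $\mathbb{C}^2$), and the rank is locally constant; shrinking further, I fix a point $x_0$ and a connected open neighbourhood $U\subset M_0$ satisfying the hypothesis $\operatorname{ddim} Z^{\Pi_t}(U)=\dim M-\operatorname{rank}\Pi_t$ for every $t$. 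The number $s:=\dim M-\tfrac12\operatorname{rank}\Pi_t$ is then independent of $t$ (note $\operatorname{rank}\Pi_t$ is the same even value for all $t\ne0$), and we must produce $s$ elements of $Z^{\{\Pi_t\}}(U)$ whose differentials are independent at the generic point of $U$.

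The key algebraic fact is pointwise: at each $x\in U$ the family of skew forms $(\Pi_t)_x$ on $(T^*_xM)^\mathbb{C}$ has constant rank in $t\in\mathbb{C}^2\setminus\{0\}$, so by the classification of pairs of skew-symmetric forms (Kronecker canonical form — only Kronecker blocks occur, no Jordan blocks) the intersection $\bigcap_{t\ne0}\ker(\Pi_t)_x\subset (T^*_xM)^\mathbb{C}$ has dimension $\dim M-\operatorname{rank}\Pi_t$, and more importantly the span $\sum_{t\ne0}\bigl((\Pi_t)_x\bigr)(T^*_xM)\subset T^\mathbb{C}_xM$ is an \emph{isotropic} subspace for \emph{every} $(\Pi_t)_x$ of dimension exactly $s$. (This is precisely the statement that a Kronecker pencil of forms, restricted to any common symplectic leaf, decomposes into odd Kronecker blocks and the "large" eigenspaces assemble into a Lagrangian subspace.) I would then observe that the differentials of functions in $Z^{\{\Pi_t\}}(U)$ sweep out exactly this span: $df_x\in\ker(\Pi_t)_x$ for $f\in Z^{\Pi_t}(U)$, so $df_x$ annihilates $\bigl((\Pi_{t'})_x\bigr)(T^*_xM)$ for the relevant $t'$ via skew-symmetry, placing $\{df_x : f\in Z^{\{\Pi_t\}}(U)\}$ inside the annihilator of that isotropic span, which has codimension $s$; conversely the completeness hypothesis on each individual $Z^{\Pi_t}(U)$ forces these differentials to fill out a space of dimension $s$ at the generic point.

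Concretely, the main step is: pick $t^{(1)},\dots,t^{(N)}$ distinct points of $\mathbb{CP}^1$ with $N$ large enough (it suffices to take $N$ equal to the number of Kronecker blocks plus one, but for safety one takes all $t$ in a generic finite configuration), choose for each $t^{(a)}$ a maximal independent system of Casimirs from $Z^{\Pi_{t^{(a)}}}(U)$, throw them all into one pile, and argue that the resulting finite collection already has $\operatorname{ddim}=s$ on an open dense subset of $U$. That the pile is \emph{involutive} with respect to each $\Pi_t$ is the easy half: if $f\in Z^{\Pi_{t^{(a)}}}$ and $g\in Z^{\Pi_{t^{(b)}}}$ then, writing $\Pi_{t^{(a)}}$ and $\Pi_{t^{(b)}}$ as two independent combinations of $\Pi_1,\Pi_2$, one gets $\{f,g\}^{\Pi_1}=\{f,g\}^{\Pi_2}=0$ by a $2\times2$ linear-algebra manipulation (this is the classical Lenard--Magri involutivity argument), hence $\{f,g\}^{\Pi_t}=0$ for all $t$. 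That the pile has full differential dimension $s$ is the half requiring the Kronecker hypothesis, via the pointwise isotropy statement above.

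The main obstacle I expect is the passage from the pointwise linear-algebra picture (constant rank of the pencil of forms $\Rightarrow$ isotropic span of dimension $s$, assembled from Kronecker blocks) to the global statement that finitely many analytic Casimirs already achieve $\operatorname{ddim}=s$ on an open dense set; this requires a semicontinuity/analyticity argument to rule out the differentials dropping rank everywhere, and a counting argument showing that the Casimirs associated to sufficiently many values of $t$ cannot all be "wasted" in the overlap of kernels. In the analytic category this is handled by noting that $\operatorname{ddim}$ is achieved on an open dense set and is lower semicontinuous, so it suffices to exhibit one point of $U$ where the differentials of the finite pile span the $s$-dimensional annihilator — which follows from the Kronecker normal form at that point together with the existence (guaranteed by the completeness hypothesis on each $Z^{\Pi_{t^{(a)}}}(U)$) of enough local Casimirs with prescribed differentials.
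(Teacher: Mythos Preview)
The paper does not give its own proof of this theorem; it simply cites Bolsinov~\cite{bols'}. Your overall plan --- Lenard--Magri for involutivity, Kronecker normal form of a pencil of skew forms at a point for the dimension count, semicontinuity to globalize --- is the standard route and is essentially what one finds in Bolsinov's work. However, the pointwise linear-algebra claims you write down are garbled in a way that would break the argument if taken literally, so let me flag the two specific errors.

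First, the claim that $\bigcap_{t\ne0}\ker(\Pi_t)_x$ has dimension $\dim M-\operatorname{rank}\Pi_t$ is false: that number is the dimension of each \emph{individual} kernel $\ker(\Pi_t)_x$, whereas the intersection over all $t$ is, for a Kronecker pencil with at least one nontrivial block, strictly smaller (it equals the number of trivial $1\times1$ blocks). Second, and more importantly, the subspace you want is not the sum of images $\sum_{t\ne0}(\Pi_t)_x(T^*_xM)\subset T^\mathbb{C}_xM$ but the sum of kernels $L:=\sum_{t\ne0}\ker(\Pi_t)_x\subset (T^*_xM)^\mathbb{C}$. It is $L$ that has dimension exactly $s=\dim M-\tfrac12\operatorname{rank}\Pi_t$ and is isotropic for every $\Pi_t$; the sum of images, by contrast, generically fills all of $T_xM$ and is certainly not of dimension $s$. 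Your next sentence then compounds the problem: placing the differentials $df_x$ inside the annihilator of the sum of images would force them into $\bigcap_t\ker(\Pi_t)_x$, which is far too small. Once you replace ``sum of images'' by ``sum of kernels'' throughout, the argument repairs itself: for $f\in Z^{\Pi_t}(U)$ one has $df_x\in\ker(\Pi_t)_x\subset L$, the $\operatorname{ddim}$ hypothesis forces these differentials to fill each kernel at a generic point and hence to fill $L$, and $\dim L=s$ gives completeness.
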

The reader is referred to~\cite{bols'} for the proof. The condition that $\operatorname{ddim} Z^{\Pi_t}(U)=\dim M-\operatorname{rank} \Pi_t$ for any $t$ is always satisfied for any sufficiently small open set~$U$ and, in many cases also for an open and dense set in~$M$.

\begin{Remark}\label{rem}Recall that a real analytic submanifold~$M$, $\dim_\mathbb{R} M=n$, in a complex manifold~${M}^c$, $\dim_\mathbb{C}{M}^c=n$, is called \emph{maximal totally real} if in a neighbourhood of any point in $M$ there exists a holomorphic coordinate system $z=(z_1,\dots,z_n)$, $z_j=x_j+{\rm i}y_j$, such that~$M$ locally is given by the equations $y_j=0$. We say that ${M}^c$ is a \emph{complexification} of~$M$ and~$M$ is \emph{real form} of~${M}^c$. The holomorphic coordinates as above will be called \emph{adapted} to $M$. A complexification exists for any real analytic~$M$~\cite{bw}.

Let $M$ be a real analytic manifold and ${M}^c$ its complexification. Any real analytic tensor~$T$ defined on~$M$ can be uniquely extended to a~holomorphic tensor ${T}^c$ defined in a vicinity of~$M$ in~${M}^c$ by extending its coefficients to holomorphic functions and substituting $\frac{\partial }{\partial x_j}$ and ${\rm d}x_j$ by~$\frac{\partial }{\partial z_j}$ and~${\rm d}z_j$ respectively in the adapted systems of coordinates. Vice versa, if a holomorphic tensor~${T}^c$ is given on~${M}^c$ such that in the adapted coordinates its coefficients restricted to~$M$ are real, then it is the holomorphic extension of some real analytic tensor~$T$ on~$M$. Obviously, if $\{\Pi_t\}_{t\in\mathbb{R}^2}$, $\Pi_t=t_1\Pi_1+t_2\Pi_2$, is a real analytic bi-Poisson structure on~$M$, then it is Kronecker at a point $m\in M$ if and only if so is its holomorphic extension $\{{\Pi}^c\}_{t\in\mathbb{C}^2}$, ${\Pi}^c_t=t_1{\Pi}^c_1+t_2 {\Pi}^c_2$.

\end{Remark}Let $G$ be a Lie group acting on a manifold $M$. Denote by $\mathcal{E}^G(M)$ the space of all $G$-invariant functions from the set ${\mathcal E}(M)$. We say that a bi-Poisson structure $\{\Pi_t\}$ is $G$-\emph{invariant} if so is each bi-vector $\Pi_t$, $t\in\mathbb{R}^2$.

Now we assume that the action of $G$ on $M$ is proper, as for instance is in the case of any smooth action of a compact Lie group. Fix some isotropy subgroup $H\subset G$ determining the {\em principal orbit type}. In this case the subset
\begin{gather*}
M_{H}=\big\{x\in M\colon G_x=gHg^{-1}\ \mathrm{for\ some}\ g\in G\big\}
\end{gather*}
of $M$, consisting of all orbits $G\cdot x$ in~$M$ isomorphic to~$G/H$, is an open and dense subset of $M$ (see~\cite[Section~2.8 and Theorem~2.8.5]{DK}). It is well known that the orbit space $M'_H:=M_{H}/G$ is a smooth manifold. There is a natural identification of spaces $\mathcal{E}^G(M_{H})$ and $p^*\mathcal{E}(M'_{H})$, where $p\colon M_{H}\to M'_H=M_{H}/G$ is the canonical projection, in particular $\operatorname{ddim}_x \mathcal{E}^G(M)=\operatorname{ddim} \mathcal{E}^G(M)$ for $x\in M_{H}$. Moreover, if an $G$-invariant bi-Hamiltonian structure $\{\Pi_t\}$ is given on $M$, all the Poisson bivectors $\Pi_t|_{M_{H}}$ are projectable with respect to~$p$, i.e., there exist a correctly defined bi-Poisson structure $\{\Pi'_t\}$ on $M_{H}/G$ such that $\Pi'_t=p_*\Pi_t$, and the identification mentioned is a~Poisson map:
\begin{gather*}
p^*\{f,g\}^{\Pi'_t}=\{p^*f,p^*g\}^{\Pi_t}, \qquad f,g\in\mathcal{E}(M'_H).
\end{gather*}
Assuming that the reduced bi-Poisson structure $\{\Pi'_t\}$ is Kronecker, by Theorem \ref{prop} for a sufficiently small $U \subset M'_H$ we get an involutive family of functions
\begin{gather*}
Z^{\{\Pi'_t\}}(U):=\operatorname{Span}\bigg(\bigcup_{t\not=0}Z^{\Pi'_t}(U)\bigg),
\end{gather*}
which is complete with respect to any Poisson structure $\Pi'_t$. In some cases the corresponding set of functions $p^*Z^{\{\Pi'_t\}}(U)$ on $p^{-1}(U) \subset M$ which by the considerations above is involutive with respect to any Poisson bivector $\Pi_t$ can be extended to a complete involutive set of functions. One such situation is touched in Theorem~\ref{below} below. This theorem also describes a method of proving the kroneckerity of the bi-Poisson structure $\{\Pi'_t\}$ reducing the problem to the calculation of rank of a finite number of the reduced Poisson structures, which was used in \cite{p4,p3}.

\begin{Theorem}\label{below} Retaining the assumptions above assume moreover that
\begin{enumerate}\itemsep=0pt
\item[$(a)$] the associated action $\rho\colon \mathfrak{g}\to\Gamma(TM)$ of the Lie algebra $\mathfrak{g}$ of $G$ on $M$ can be extended to a~holomorphic action $\rho^c\colon \mathfrak{g}^\mathbb{C}\to\Gamma(T {M}^c)$ of the complexification $\mathfrak{g}^\mathbb{C}$ of the Lie algebra $\mathfrak{g}$ on some complexification ${M}^c$ of $M$ on which a holomorphic extension $\{{\Pi}^c_t\}$ of $\{\Pi_t\}$ is defined $($see Remark~{\rm \ref{rem})} and $\{{\Pi}^c_t\}$ is $\mathfrak{g}^\mathbb{C}$-invariant, i.e., the Lie derivative $\mathcal{L}_{\rho^c(\xi)} {\Pi}^c_t$ is equal to zero for any $t\in\mathbb{C}^2$ and any $\xi\in\mathfrak{g}^\mathbb{C}$; here $\Gamma(TM)$ stands for the space of real analytic vector fields on $M$ and $\Gamma(T {M}^c)$ for the space of holomorphic vector fields on ${M}^c$;
\item[$(b)$] the action of $G$ on $M$ is generically {\it locally free}, i.e., the stabilizer $H$ corresponding to the
principal orbit type is finite; in particular, a generic stabilizer algebra of the actions~$\rho$ and~$\rho^c$ is trivial;
\item[$(c)$] $\operatorname{codim}\operatorname{Sing}\mathfrak{g}^*\ge 2$, where $\operatorname{Sing}\mathfrak{g}^* \subset \mathfrak{g}^*$ is the union of the coadjoint orbits of nonmaximal dimension, i.e., $\operatorname{Sing}\mathfrak{g}^*=\mathfrak{g}^*\setminus R^{\Pi_{\mathfrak{g}^*}}$ for the Lie--Poisson structure $\Pi_{\mathfrak{g}^*}$ on $\mathfrak{g}^*$;
\item[$(d)$] for almost all $t$ the bivector ${\Pi}^c_t$ is nondegenerate and the action $\rho^c$ is Hamiltonian with respect to ${\Pi}^c_t$, i.e., there exists a set~$E \subset\mathbb{C}^2$ being the union of a finite number of $1$-dimensional linear subspaces $\langle t_1\rangle, \dots,\langle t_s\rangle$, a map $\mu^c_t\colon {M}^c\to \big(\mathfrak{g}^\mathbb{C}\big)^*$, $t\in\mathbb{C}^2\setminus E$ $($the so-called momentum map$)$, such that $\operatorname{rank} {\Pi}^c_t=\dim M$, $t\in \mathbb{C}^2\setminus E$, and any fundamental vector field $\rho^c(\xi)$, $\xi\in\mathfrak{g}^\mathbb{C}$, of this action is a Hamiltonian vector field ${\Pi}^c_t\big(H_t^\xi\big)$ with the Hamiltonian function $H_t^\xi(x)=\langle \mu^c_t(x),\xi\rangle$ and $\mu^c_t$ is a Poisson map from the Poisson manifold $({M}^c,{\Pi}^c_t)$ to the Lie--Poisson manifold $\big(\big(\mathfrak{g}^\mathbb{C}\big)^*,\Pi_{(\mathfrak{g}^\mathbb{C})^*}\big)$;
 \item[$(e)$] the restriction $\mu_t=\mu^c_t|_M$, $t\in\mathbb{R}^2$, takes values in $\mathfrak{g}^* \subset \big(\mathfrak{g}^\mathbb{C}\big)^*$; in particular the action $\rho$ itself is Hamiltonian with respect to any $\Pi_t$, $t\not\in\mathbb{R}^2\cap E$: $\rho(\xi)=\Pi_t\big(H_t^\xi|_M\big)$, $\xi\in\mathfrak{g}$.
\end{enumerate}
Then
\begin{enumerate}\itemsep=0pt
\item[$1)$] the set $U:=M_H\setminus\big(\bigcup_{t\in\mathbb{R}^2}\mu_t^{-1}(\operatorname{Sing}\mathfrak{g}^*)\big)$ is an $G$-invariant open dense set in~$M_H$;
\item[$2)$] the reduced bi-Poisson structure $\{\Pi'_t\}$ on $M'_H=M_H/G$ is Kronecker at a~point $x'\in p(U)$ if and only if
\begin{gather*}
\operatorname{corank} \Pi'_{t_i}|_{x'}=\operatorname{ind} \mathfrak{g}, \qquad i=1,\dots,s;
\end{gather*}
\item[$3)$] if $\{\Pi'_t\}$ is Kronecker and $\mathcal{F}$ stands for any complete involutive set of polynomial functions on~$(\mathfrak{g}^*,\Pi_{\mathfrak{g}^*})$ $($which exists by the Sadetov theorem~{\rm \cite{sadetov})}, the set of functions
 \begin{gather}\label{setoffu}
 \mathcal{I}:= p^*\big(Z^{\{\Pi'_t\}}(M_H')\big)\bigcup \mu_{t_0}^*\mathcal{F}
 \end{gather}
is complete on $M_H$ with respect to any $\Pi_{t_0}$, $t_0\not\in E\cap\mathbb{R}^2$;
\item[$4)$] moreover, $p^*\big(Z^{\{\Pi'_t\}}(p(U))\big)=\operatorname{Span}\big(\bigcup_{t\not=0}\mu_t^*\big(Z^{\Pi_{\mathfrak{g}^*}}(\mu_t(U))\big)\big)$.
 \end{enumerate}
 Here $\operatorname{ind} \mathfrak{g}$, the index of the Lie algebra $\mathfrak{g}$, is the codimension of a coadjoint orbit of maximal dimension, i.e., $\operatorname{ind} \mathfrak{g}=\dim\mathfrak{g}-\operatorname{rank} \Pi_{\mathfrak{g}^*}$.
\end{Theorem}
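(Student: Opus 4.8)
The theorem is a structured list of four assertions, and the natural strategy is to prove them in order, since each feeds the next. First I would establish item 1, the openness and density of $U$. The set $M_H$ is already open and dense by the principal-orbit-type theorem (cited from \cite{DK}); $G$-invariance of $U$ is clear because $\operatorname{Sing}\mathfrak{g}^*$ is $\operatorname{Ad}^*$-invariant and $\mu_t$ is equivariant (being a momentum map). For density one removes, for each of the finitely many exceptional lines $\langle t_i\rangle$ and the generic stratum, a preimage $\mu_t^{-1}(\operatorname{Sing}\mathfrak{g}^*)$; here I would use assumption~$(c)$, $\operatorname{codim}\operatorname{Sing}\mathfrak{g}^*\ge 2$, together with the fact that $\mu_t$ is a submersion onto its image on the locally free locus (assumption~$(b)$), so that the preimage has codimension $\ge 2$ and the countable/finite union of such sets still has dense complement; working in the complexification ${M}^c$ via Remark~\ref{rem} makes the dimension count clean, then restrict to $M$.

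The heart of the matter is item~2, the kroneckerity criterion. The direction ``Kronecker $\Rightarrow$ corank equals $\operatorname{ind}\mathfrak{g}$ on the exceptional lines'' should follow by the general linear-algebra description of Kronecker pencils: a pencil is Kronecker at a point exactly when its rank is constant in $(t_1,t_2)\in\mathbb{C}^2\setminus\{0\}$, and since the generic ${\Pi}^c_t$ is nondegenerate downstairs... more precisely, one computes $\operatorname{corank}\Pi'_t$ at a point of $p(U)$ by the bifurcation-lemma argument: the reduced bracket's corank is measured by the dimension of the annihilator $\{\xi\in\mathfrak{g}\colon \mu_t(x)\in\mathfrak{g}^*_\xi\}$, i.e.\ by the codimension of the coadjoint orbit through $\mu_t(x)$. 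On $U$ the point $\mu_t(x)$ is $\operatorname{Ad}^*$-regular, so $\operatorname{corank}\Pi'_t|_{x'}=\operatorname{ind}\mathfrak{g}$ for \emph{generic} $t$ automatically; the content is that this persists on the exceptional lines iff the pencil is Kronecker, which is precisely the constancy-of-rank condition. Conversely, if $\operatorname{corank}\Pi'_{t_i}=\operatorname{ind}\mathfrak{g}$ for all exceptional $i$, then the reduced rank is constant across \emph{all} $t$, giving kroneckerity. The technical engine here is the bifurcation lemma applied to the Hamiltonian action of $G$ with respect to ${\Pi}^c_t$ (assumption~$(d)$), combined with local freeness; I expect this to be the main obstacle, because one must correctly relate the corank of the reduced Poisson tensor to Lie-theoretic data uniformly in $t$, handling the passage through the exceptional lines where ${\Pi}^c_t$ degenerates.

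Item~3 then follows by combining Theorem~\ref{prop} with the reduction machinery already set up before the theorem statement: if $\{\Pi'_t\}$ is Kronecker, $Z^{\{\Pi'_t\}}(M'_H)$ is complete with respect to each $\Pi'_{t_0}$ on $M'_H$, hence $p^*Z^{\{\Pi'_t\}}$ is involutive on $M_H$ with respect to each $\Pi_{t_0}$; it has $\dim M'_H - \tfrac12\operatorname{rank}\Pi'_{t_0}$ independent functions, and one must supplement by the pulled-back Sadetov family $\mu_{t_0}^*\mathcal{F}$ to reach the full number $\dim M - \tfrac12\operatorname{rank}\Pi_{t_0}$. The count works because the fibres of $p$ are $G$-orbits and on them $\mu_{t_0}$ restricts to (an open piece of) a coadjoint orbit, so $\mu_{t_0}^*\mathcal{F}$ contributes exactly the ``vertical'' independent directions; involutivity of the combined family with respect to $\Pi_{t_0}$ uses that $\mu_{t_0}$ is a Poisson map (assumption~$(d)$, restricted via~$(e)$) and that $G$-invariant functions Poisson-commute with $\mu_{t_0}^*(\text{anything})$ up to the momentum relation — this is the standard Mishchenko–Fomenko-type argument.

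Finally item~4 is an identity of subspaces of functions which I would prove by double inclusion. The inclusion $\supseteq$ is immediate: each $\mu_t^*(Z^{\Pi_{\mathfrak{g}^*}}(\mu_t(U)))$ consists of $G$-invariant functions (Casimirs of the Lie–Poisson structure are $\operatorname{Ad}^*$-invariant), Poisson-central for $\Pi_t$ by the Poisson-map property, hence descends into $Z^{\Pi'_t}(p(U))$. For $\supseteq$ the other way, I would use item~4's companion fact from the pre-theorem discussion that $p^*\mathcal{E}(M'_H)=\mathcal{E}^G(M_H)$ together with a local computation: a Casimir of $\Pi'_t$ pulls back to a $G$-invariant Casimir of $\Pi_t$ on $U$, and on the locally free Hamiltonian locus such a function must be of the form $\mu_t^*(\text{Casimir of }\Pi_{\mathfrak{g}^*})$ — again a consequence of the bifurcation lemma, since the $\Pi_t$-symplectic leaves through $U$ are the $\mu_t$-fibres over coadjoint orbits. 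Assembling these over all $t\ne 0$ and taking spans yields the claimed equality.
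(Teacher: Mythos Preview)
Your plan matches the paper's proof closely. The paper is in fact terser than you on items~3 and~4: it simply invokes \cite[Proposition~2.22]{p3} (two Poisson submersions $p$, $\mu_{t_0}$ with skew-orthogonal fibres pull back complete families to a complete family) and \cite[Corollary~2.19]{p3} (the Casimirs of the two reductions agree) rather than spelling out the dual-pair argument as you do; for item~2 it isolates the linear-algebra core as a separate lemma (Lemma~\ref{lemM}: the restrictions of a nondegenerate bivector to the annihilators of two mutually skew-orthogonal subspaces have equal kernels), which is precisely your ``corank of the reduced bracket equals codimension of the coadjoint orbit through $\mu_t(x)$'' step.

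One point in item~1 needs tightening. The union $\bigcup_{t}\mu_t^{-1}(\operatorname{Sing}\mathfrak{g}^*)$ is \emph{not} finite or countable: it runs over a genuine one-parameter family (projectively, since $\mu_{\lambda t}^{-1}(\operatorname{Sing}\mathfrak{g}^*)=\mu_t^{-1}(\operatorname{Sing}\mathfrak{g}^*)$). This is exactly why assumption~$(c)$ demands $\operatorname{codim}\operatorname{Sing}\mathfrak{g}^*\ge 2$ rather than $\ge 1$: by the bifurcation lemma and local freeness each $\mu_t$ is a submersion on $M_H$, so each $\mu_t^{-1}(\operatorname{Sing}\mathfrak{g}^*)$ has codimension $\ge 2$, and after sweeping out the one-dimensional parameter the union still has codimension $\ge 1$ and hence dense complement. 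If the union were really countable, codimension $\ge 1$ and Baire would already suffice, so your phrasing undersells the role of~$(c)$.
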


\begin{proof} The $G$-invariance of the set $U$ follows from the well-known fact that the Poisson property of the moment map $\mu_t$ is equivalent to its $G$-equivariance (with respect to the coadjoint action of $G$ on $\mathfrak{g}^*$), which implies the $G$-invariance of $\mu_t^*(\operatorname{Sing}\mathfrak{g}^*)$.

The so-called ``bifurcation lemma'' says that for any $x\in M_H$ the image \mbox{$(\mu_t)_*(T_xM_H)\subset\mathfrak{g}^*$} coincides with the annihilator in $\mathfrak{g}^*$ of the Lie algebra of the isotropy group $G^x$ of $x$~\cite[Proposition~4.5.12]{ortegar}. Since this algebra vanishes by Assumption~(b), $\operatorname{rank} \mu_t(x)=\dim\mathfrak{g}^*$ and the image~$\mu_t(M_H)$ contains an open subset of $\mathfrak{g}^*$. The set $\operatorname{Sing}\mathfrak{g}^*$ is algebraic and its complement in~$\mathfrak{g}^*$ is open and dense, hence Assumption (c) guarantees that the set~$U$ is also open and dense.

To prove item 2 observe that for any $t\not=t_i$, $i=1,\dots,s$ and any $x\in M_H$, by the holomorphic version of the bifurcation lemma and by a simple algebraic fact (Lemma \ref{lemM} below) $\operatorname{corank}(({\Pi}^c_t)')_{x}=\operatorname{corank} (\Pi_{(\mathfrak{g}^\mathbb{C})^*})_{\mu^c_t(x)}$. Here $(({\Pi}^c_t)')_{x}$ is the restriction of the bivector $({\Pi}^c_t)_{x}$ treated as a bilinear skew-symmetric form on $T^*_x {M}^c$ to the annihilator $(T_x\mathcal{O})^\circ \subset T^*_x {M}^c$ of the tangent space $T_x\mathcal{O}$ to the $\mathfrak{g}^\mathbb{C}$-orbit $\mathcal{O}$ passing through $x$ and it is known that the space~$T_x\mathcal{O}$ is the skew-orthogonal complement to the tangent space through~$x$ of the fiber of the moment map~$\mu^c_t$.

Hence, if moreover $x\in U$, then $\operatorname{corank}_\mathbb{R}(\Pi_t')_{p(x)}=\operatorname{corank}_\mathbb{C}(({\Pi}^c_t)')_{p(x)}=\operatorname{ind} \mathfrak{g}^\mathbb{C}=\operatorname{ind} \mathfrak{g}$.
Therefore the reduced Poisson pencil $\{\Pi_t\}$ is Kronecker at $p(x)$ if and only if the corank at $p(x)$ of the reductions $(\Pi_{t_i})'$ of the exceptional Poisson structures $\Pi_{t_i}$, $i=1,\dots,s$, is equal to $\operatorname{ind} \mathfrak{g}$.

Item 3 follows from the well known fact that once we have a pair of Poisson submersions $p_1\colon (M,\Pi)\to(M_1,\Pi_1)$ and $p_2\colon (M,\Pi)\to(M_2,\Pi_2)$ with skew-orthogonal fibers with respect to $\Pi$ and complete families of functions $\mathcal{F}_1$, $\mathcal{F}_2$ on $(M_1,\Pi_1)$, $(M_2,\Pi_2)$ respectively, the family $p_1^*(\mathcal{F}_1)\cup p_2^*(\mathcal{F}_2)$ is complete on $(M,\Pi)$ \cite[Proposition~2.22]{p3}.

The last item is a consequence of another well known fact that $p_1^*\big(Z^{\Pi_1}\big)=p_2^*\big(Z^{\Pi_2}\big)$ \cite[Corollary~2.19]{p3}.
\end{proof}

\begin{Lemma}\label{lemM}Let $V$ be a vector space over $\mathbb{K}$ and $\omega\colon V\times V\to \mathbb{K}$ a nondegenerate skew-symmetric bilinear form. Denote by $\Pi\colon V^*\times V^*\to\mathbb{K}$ its inverse bivector. Let $V_1,V_2 \subset V$ be two vector subspaces being orthogonal complements of each other with respect to~$\omega$. Then the restrictions of $\Pi$ to the subspaces $W_1:=V_1^\circ \subset V^*$ and $W_2:=V_2^\circ \subset V^*$ have the same coranks.
\end{Lemma}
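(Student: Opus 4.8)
The plan is to set up dual bases adapted to the symplectic splitting and reduce everything to a linear-algebra bookkeeping of coranks. First I would observe that since $\omega$ is nondegenerate and $V_1,V_2$ are mutual $\omega$-orthogonal complements, we have $V=V_1\oplus V_2$ if and only if $\omega|_{V_1}$ (equivalently $\omega|_{V_2}$) is nondegenerate; in general $V_1\cap V_2=V_1^{\perp_\omega}\cap V_1$ is the radical of $\omega|_{V_1}$, and likewise for $V_2$. The key structural fact I would establish is that $\dim V_1+\dim V_2=\dim V$ always holds (because $V_2=V_1^{\perp_\omega}$ and $\omega$ is nondegenerate), so that $\dim W_1=\dim V^*-\dim V_1=\dim V_2$ and $\dim W_2=\dim V_1$.

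Next I would identify the radicals. For a subspace $W\subset V^*$, the corank of $\Pi|_W$ is $\dim\{w\in W : \Pi(w,w')=0\ \forall w'\in W\}$. Writing $\flat\colon V\to V^*$ for the isomorphism $v\mapsto\omega(v,\cdot)$ induced by $\omega$, one has $\Pi=\flat^{-1}$ transported to the dual in the appropriate sense; concretely $\Pi(\flat v,\flat v')=\omega(v,v')$. Under $\flat$, the annihilator $W_1=V_1^\circ$ corresponds to $\flat^{-1}(V_1^\circ)=V_1^{\perp_\omega}=V_2$, and similarly $\flat^{-1}(W_2)=V_1$. Therefore $\operatorname{corank}(\Pi|_{W_1})$ equals the dimension of the radical of $\omega|_{V_2}$, and $\operatorname{corank}(\Pi|_{W_2})$ equals the dimension of the radical of $\omega|_{V_1}$. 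So the lemma reduces to the claim: the radical of $\omega|_{V_1}$ and the radical of $\omega|_{V_2}$ have the same dimension.

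Finally I would prove that equality. Both radicals equal $V_1\cap V_2$: indeed $\operatorname{rad}(\omega|_{V_1})=V_1\cap V_1^{\perp_\omega}=V_1\cap V_2$, and symmetrically $\operatorname{rad}(\omega|_{V_2})=V_2\cap V_2^{\perp_\omega}=V_2\cap V_1$. Hence the two coranks are literally the same number, namely $\dim(V_1\cap V_2)$, and the lemma follows. The only point requiring any care — and the one I would flag as the main (minor) obstacle — is the clean verification that $\flat^{-1}(V_i^\circ)=V_i^{\perp_\omega}=V_j$ for $\{i,j\}=\{1,2\}$; this is where nondegeneracy of $\omega$ and the mutual-complement hypothesis are both used, and it is what makes the annihilators $W_1,W_2$ translate into the orthogonal complements under the identification $\Pi=\omega^{-1}$. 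Everything else is routine dimension counting.
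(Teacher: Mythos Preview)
Your argument is correct and, at its core, identical to the paper's: both show that the radical of $\Pi|_{W_i}$ is $W_1\cap W_2$ (equivalently, under your isomorphism $\flat$, that the radical of $\omega|_{V_i}$ is $V_1\cap V_2$), so the two coranks coincide. The only difference is cosmetic: the paper works directly in $V^*$, noting that $W_1$ and $W_2$ are mutual $\Pi$-orthogonal complements and hence $\ker(\Pi|_{W_i\times W_i})=W_1\cap W_2$, whereas you transport everything back to $V$ via $\flat$ and compute the same intersection there; your opening remarks about dual bases and the $V=V_1\oplus V_2$ criterion are not actually needed for the argument you give.
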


\begin{proof} Indeed, since $W_1$ and $W_2$ are mutual orthogonal complements with respect to $\Pi$, we have $\ker(\Pi|_{W_1\times W_1})=W_1\cap W_2=\ker(\Pi|_{W_2\times W_2})$.
\end{proof}

\section[Bi-Poisson structures on cotangent bundles related to Nijenhuis $(1,1)$-tensors]{Bi-Poisson structures on cotangent bundles\\ related to Nijenhuis $\boldsymbol{(1,1)}$-tensors}\label{sec:Bi-Poisson}

\begin{Definition}\label{defi2}Let $Q$ be a manifold and $X\in \Gamma(TQ)$ be a vector field on $Q$. Then the formula $\widetilde X:=\Pi\big(\overline X\big)$, where $\Pi=\omega^{-1}=\partial_q\wedge\partial_p$ is the canonical nondegenerate Poisson structure on $T^*Q$ inverse to the canonical symplectic form $\omega={\rm d}p\wedge {\rm d}q$ and $\overline X$ stands for the linear function on $T^*Q$ corresponding to $X$, gives a vector field $\widetilde X\in\Gamma(TT^*Q)$ which will be called the {\em cotangent lift} of~$X$. The local characterization in the canonical $(q,p)$-coordinates is as follows: if $X=X^i(q)\frac{\partial }{\partial q^i}$, then $\overline{X}=X^i(q)p_i$ and $\widetilde X=X^i(q)\frac{\partial }{\partial q^i}-p_j\frac{\partial X^j(q)}{\partial q^i}\frac{\partial }{\partial p_i}$.
\end{Definition}

\begin{Remark}\label{reem} One can also describe the Hamiltonian function $\overline{X}$ as the evaluation $\theta\big(\widetilde{X}\big)$ of the canonical Liouville 1-form $\theta=p_i\,{\rm d}q^i$ on $\widetilde{X}$.
\end{Remark}

\begin{Remark}\label{refund}In particular, if a Lie group $G$ with $\operatorname{Lie}(G)=\mathfrak{g}$ acts on a manifold $Q$ and $X_\xi$ is the fundamental vector field of this action corresponding to an element $\xi\in\mathfrak{g}$, then $\widetilde X_\xi$ is the corresponding fundamental vector field of the extended cotangent action of~$G$ on~$T^*Q$.
\end{Remark}

Let $(M,\omega)$ be a symplectic manifold, $\omega_1$ another symplectic form on $M$. Then $\omega$, $\omega_1$ are Poisson compatible (i.e., the Poisson structures $\Pi:=\omega^{-1}$, $\Pi_1:=\omega_1^{-1}$ are compatible) if and only if the $(1,1)$-tensor $\widetilde{N}:=\Pi_1\circ\omega\colon TM\to TM$ is Nijenhuis (cf.\ \cite[Proposition~7.1]{Magri97}). Assume this is the case. Let $\Pi^\lambda= \Pi_1- \lambda \Pi $. We have $\Pi^\lambda =\big(\widetilde{N} - \lambda I \big) \Pi $,
 therefore $ \operatorname{im} \Pi^\lambda = \operatorname{im} \big(\widetilde{N} - \lambda I \big) \Pi = \operatorname{im} \big(\widetilde{N} - \lambda I \big) $ since $\Pi$ is nondegenerate. This gives us relation between characteristic distributions of Poisson structures and eigendistributions of a~Nijenhuis tensor. In particular, we have proved the following lemma.

\begin{Lemma}\label{symplf}Retaining the assumptions above assume additionally that $\widetilde{N}$ is semisimple and has constant eigenvalues $\lambda_1,\dots,\lambda_s$, $\lambda_i\not=\lambda_j$, $i\not=j$ $($assumed to be real in the real category$)$. Let $\widetilde D_i$, $i=1,\dots,s$, be the eigendistribution corresponding to the eigenvalue~$\lambda_i$. Then the foliation~$\mathcal{F}_i$ tangent to the distribution $\sum\limits_{j\not=i}\widetilde D_j$ $($which is integrable by Lemma~{\rm \ref{L4})} coincides with the symplectic foliation of the degenerate Poisson bivector~$\Pi^{\lambda_i}$.
\end{Lemma}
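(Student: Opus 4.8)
The plan is to combine the identity $\operatorname{im}\Pi^{\lambda_i} = \operatorname{im}(\widetilde N - \lambda_i I)$ established just above the statement with the eigenspace decomposition of $\widetilde N$ coming from Lemma~\ref{L4}. Since $\widetilde N$ is semisimple with distinct constant eigenvalues $\lambda_1,\dots,\lambda_s$, we have the pointwise (and, by constancy of eigenvalues, smooth) direct sum $TM = \widetilde D_1 \oplus \cdots \oplus \widetilde D_s$, and the operator $\widetilde N - \lambda_i I$ acts as $(\lambda_j - \lambda_i)I$ on each $\widetilde D_j$. Hence $\widetilde N - \lambda_i I$ annihilates $\widetilde D_i$ and is an isomorphism on $\sum_{j\neq i}\widetilde D_j$, so that $\operatorname{im}(\widetilde N - \lambda_i I) = \sum_{j\neq i}\widetilde D_j$. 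Combined with the displayed identity this gives $\operatorname{im}\Pi^{\lambda_i} = \sum_{j\neq i}\widetilde D_j$ as generalized distributions on $M$.

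The next step is to recall that for any Poisson bivector the image (characteristic) distribution $\operatorname{im}\Pi^{\lambda_i}$ is integrable and its integral leaves are exactly the symplectic leaves of $\Pi^{\lambda_i}$; this is the standard fact quoted in the paragraph following Definition~\ref{10.80}. On the other hand, $\sum_{j\neq i}\widetilde D_j$ is integrable by Lemma~\ref{L4} (applied to $\widetilde N$: the pairwise sums $\widetilde D_j + \widetilde D_k$ are involutive, hence so is any sum of the $\widetilde D_j$'s, or one argues directly that $\sum_{j\neq i}\widetilde D_j = \bigcap_{j\neq i}\operatorname{im}(\widetilde N - \lambda_j I)$ as in the proof of that lemma). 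So both foliations $\mathcal F_i$ and the symplectic foliation of $\Pi^{\lambda_i}$ are the foliations integrating one and the same integrable distribution, and since an integrable distribution determines its foliation uniquely (leaves are maximal connected integral submanifolds), the two foliations coincide. This is really the whole argument.

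There is essentially no hard obstacle here; the statement is a direct bookkeeping consequence of the formula $\Pi^\lambda = (\widetilde N - \lambda I)\Pi$ derived right before it together with the semisimplicity of $\widetilde N$. The one point that deserves a sentence of care is the passage from pointwise image computations to an equality of (smooth, possibly singular) distributions and then to an equality of foliations: because the eigenvalues are constant the eigendistributions $\widetilde D_i$ are smooth subbundles of constant rank, so $\operatorname{im}(\widetilde N - \lambda_i I)$ is a constant-rank subbundle equal to $\bigoplus_{j\neq i}\widetilde D_j$ everywhere, and the matching with $\operatorname{im}\Pi^{\lambda_i}$ is an equality of honest subbundles of $TM$; their common integral foliation is then unambiguously defined. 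Thus I would write the proof in one short paragraph: record $\operatorname{im}(\widetilde N - \lambda_i I) = \sum_{j\neq i}\widetilde D_j$, invoke $\operatorname{im}\Pi^{\lambda_i} = \operatorname{im}(\widetilde N - \lambda_i I)$, and conclude that the symplectic foliation of $\Pi^{\lambda_i}$, being tangent to this distribution, is exactly $\mathcal F_i$.
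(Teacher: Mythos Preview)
Your argument is correct and is exactly the approach the paper takes: the paper derives $\operatorname{im}\Pi^{\lambda}=\operatorname{im}(\widetilde N-\lambda I)$ in the paragraph immediately preceding the lemma and then declares the lemma proved, the remaining identification $\operatorname{im}(\widetilde N-\lambda_i I)=\sum_{j\neq i}\widetilde D_j$ being the obvious consequence of semisimplicity that you spell out. Your extra care about constant rank and uniqueness of the integral foliation is appropriate but does not deviate from the paper's reasoning.
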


The following definition is due to F.-J.~Turiel~\cite{t12}.

\begin{Definition}\label{ctgl}Let $Q$ be a manifold and $K\colon TQ\to TQ$ be a $(1,1)$-tensor. Define its {\em cotangent lift} $\widetilde{K}\colon TM\to TM$, $M:=T^*Q$, as follows. Let $K^t\colon T^*Q\to T^*Q$ be the map transposed to~$K$ understood as a smooth map $M\to M$, let $\omega$ be the canonical symplectic form on~$T^*Q$, and let $\omega_1:=\big(K^t\big)^*\omega$. Put $\widetilde{K}:=\omega^{-1}\circ\omega_1$.
\end{Definition}

If $\{q^i\}$ is a system of local coordinates on $Q$ and $K=K^i_j (q) \frac{\partial}{\partial q^i } \otimes {\rm d}q^j $, then in the corresponding coordinates $\big(q^i, p_i\big)$ on $T^*Q$ we have
\begin{gather*}
\widetilde{K}= K^i_j (q) \left( \frac{\partial}{\partial q^i } \otimes {\rm d}q^j + \frac{\partial}{\partial p_j } \otimes {\rm d}p_i \right)
+ p_k \left( \frac{\partial K^k_j }{\partial q^i} - \frac{\partial K^k_i }{\partial q^j} \right) \frac{\partial}{\partial q^j}\otimes {\rm d}q^i
.\end{gather*}
Obviously, if $K$ is a fiberwise invertible $(1,1)$-tensor, then $\widetilde{K^{-1}}=\widetilde{K}^{-1}$.

\begin{Lemma}[\cite{t12}]\label{tors} $T_K=0\Longleftrightarrow T_{\widetilde{K}}=0$.
\end{Lemma}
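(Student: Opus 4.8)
The plan is to exploit the bi-Hamiltonian characterization already set up just before the statement. Recall that for the canonical symplectic form $\omega$ on $M=T^*Q$ and the form $\omega_1:=(K^t)^*\omega$, the cotangent lift is $\widetilde K=\omega^{-1}\circ\omega_1$, and that $\widetilde K$ is a Nijenhuis tensor on $M$ if and only if $\omega$ and $\omega_1$ are Poisson compatible (the cited Proposition~7.1 of \cite{Magri97}). So the implication $T_K=0\Rightarrow T_{\widetilde K}=0$ reduces to showing that $\omega$ and $\omega_1$ are compatible whenever $T_K=0$, i.e.\ that $t_1\omega^{-1}+t_2\omega_1^{-1}$ is Poisson for all $(t_1,t_2)$, equivalently (since $\omega$ is nondegenerate) that each form $\omega+\lambda\omega_1$ is closed with vanishing associated Nijenhuis-type obstruction; but $\omega$ and $\omega_1$ are both closed automatically ($\omega_1=(K^t)^*\omega$ is a pullback of a closed form), so the only content is that the pencil satisfies the Magri compatibility condition. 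I would verify this by a direct computation in Darboux coordinates $(q^i,p_i)$ using the explicit formula for $\widetilde K$ displayed above: substituting $K^i_j(q)$ into the coordinate expression for the Nijenhuis torsion $T_{\widetilde K}$, one finds that every term is built from $K^i_j$ and its first derivatives, arranged exactly so that $T_{\widetilde K}$ vanishes identically on the momentum-linear locus iff the bracket combinations $[K\partial_i,K\partial_j]-K[\cdot]_K$ among the coordinate fields vanish, which is precisely $T_K=0$.

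For the converse, $T_{\widetilde K}=0\Rightarrow T_K=0$, the key observation is that $\widetilde K$ is ``fibered over $K$'': its leading block acts on the horizontal coordinate vectors $\partial/\partial q^i$ by $K$ (modulo vertical corrections), so the Nijenhuis torsion of $\widetilde K$, when evaluated on two horizontal lifts of coordinate vector fields and then projected to $TQ$ via $\pi_*$, returns exactly $T_K$ evaluated on the corresponding vectors on $Q$. Concretely: $\pi\colon M\to Q$ is a submersion, $\pi_*\circ\widetilde K=K\circ\pi_*$ on the relevant fields, and $\pi_*$ intertwines brackets of projectable fields; hence $\pi_*\, T_{\widetilde K}(X,Y)=T_K(\pi_*X,\pi_*Y)$ for projectable $X,Y$. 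Since every pair of tangent vectors on $Q$ arises as $\pi_*$ of such a pair, $T_{\widetilde K}=0$ forces $T_K=0$.

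The main obstacle is organizing the coordinate computation in the forward direction cleanly: the displayed formula for $\widetilde K$ has the off-diagonal block $p_k\bigl(\partial_i K^k_j-\partial_j K^k_i\bigr)$, and when one forms $[\widetilde KX,\widetilde KY]$ and $\widetilde K[X,Y]_{\widetilde K}$ for $X=\partial/\partial q^a$, $Y=\partial/\partial q^b$ (and also the mixed cases involving $\partial/\partial p_a$), the momentum-dependent terms proliferate. The point to keep in mind is that $T_{\widetilde K}$ is tensorial and its coefficients are polynomial of degree $\le 1$ in the $p_k$; the degree-zero part reproduces $T_K$ (pulled back), and the degree-one part is a combination of second derivatives of $K^i_j$ that vanishes identically by symmetry of mixed partials together with the structure of the ``curl'' term $\partial_iK^k_j-\partial_jK^k_i$. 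Once this bookkeeping is done, both implications fall out; alternatively, one may avoid coordinates entirely by citing the bi-Hamiltonian equivalence in both directions — $T_{\widetilde K}=0\iff\{\omega,\omega_1\}$ compatible $\iff$ the pencil $\{q^i,p_i\}$-bracket obstruction vanishes $\iff T_K=0$ — which is the route I would actually take to keep the argument short, referring to \cite{t12} for the detailed verification.
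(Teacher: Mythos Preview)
The paper does not prove this lemma at all: it is stated with a citation to Turiel \cite{t12} and no argument is given. So there is no ``paper's own proof'' to compare against; the paper treats it as a black box, exactly as you do in your last sentence.

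Your sketch contains the right ingredients but has a couple of soft spots. For the converse direction, the claim $\pi_*\circ\widetilde K=K\circ\pi_*$ is not literally true: from the displayed coordinate formula, $\widetilde K(\partial/\partial q^a)$ picks up the extra term $p_k(\partial_a K^k_j-\partial_j K^k_a)\,\partial/\partial q^j$, which is \emph{horizontal} (not vertical) and depends on $p$, so the field is not $\pi$-projectable. The clean fix is to evaluate the tensor $T_{\widetilde K}$ along the zero section $p=0$: there the extra term disappears, $\widetilde K$ restricts to $K$ on $\partial/\partial q$-vectors, and the $\partial/\partial q$-components of $T_{\widetilde K}(\partial_{q^a},\partial_{q^b})$ reproduce $T_K(\partial_{q^a},\partial_{q^b})$ exactly. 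For the forward direction, your assertion that the degree-one (in $p$) part of $T_{\widetilde K}$ ``vanishes identically by symmetry of mixed partials'' is too quick: that part genuinely uses $T_K=0$, not just $\partial_i\partial_j=\partial_j\partial_i$. The honest route is the full coordinate computation of $T_{\widetilde K}$ using the displayed formula; both the $p^0$- and $p^1$-coefficients turn out to be linear combinations of the components of $T_K$ and their $q$-derivatives. The ``bi-Hamiltonian shortcut'' you propose at the end is circular as written: the Magri criterion tells you $T_{\widetilde K}=0\iff(\Pi,\Pi_1)$ compatible, but you still have to relate compatibility of $(\Pi,\Pi_1)$ to $T_K=0$, which is the same computation.

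In short: your outline is viable and matches what one expects Turiel's argument to be, but as it stands it is a sketch with a misstated projectability claim and an unfinished forward computation; the paper itself simply cites \cite{t12}.
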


In particular, we have the following statement.

\begin{Lemma}\label{constt}Let $N\colon TQ\to TQ$ be a fiberwise invertible Nijenhuis $(1,1)$-tensor. Then the pair of bivectors $(\Pi,\Pi_1)$, where $\Pi:=\omega^{-1}$ is the canonical Poisson bivector on $T^*Q$, $\Pi_1:=\widetilde{N}\circ\Pi$, and~$\omega $ is the canonical symplectic form on~$T^*Q$, is a pair of compatible Poisson bivectors on~$T^*Q$.
\end{Lemma}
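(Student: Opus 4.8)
The plan is to reduce the statement to the equivalence recalled just before Lemma~\ref{symplf}: for two symplectic forms $\omega,\omega'$ on a manifold $M$, the Poisson bivectors $\omega^{-1}$ and $(\omega')^{-1}$ are compatible if and only if the $(1,1)$-tensor $(\omega')^{-1}\circ\omega$ is Nijenhuis. The two ingredients I would assemble for this are: (i) since $N$ is fiberwise invertible, so is its cotangent lift $\widetilde N$, and $\widetilde N^{-1}=\widetilde{N^{-1}}$ by the observation following Definition~\ref{ctgl}; and (ii) $\widetilde N$ is Nijenhuis, because $T_N=0$ and $T_N=0\Longleftrightarrow T_{\widetilde N}=0$ by Lemma~\ref{tors}.

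First I would identify $\Pi_1$ as the inverse of an honest symplectic form. Because $N$ is fiberwise invertible, the fiberwise transpose $N^t\colon T^*Q\to T^*Q$ is a diffeomorphism, hence so is $(N^t)^{-1}=(N^{-1})^t$; therefore $\omega_1':=\big((N^{-1})^t\big)^{*}\omega$, being the pullback of the canonical symplectic form by a diffeomorphism, is a symplectic form on $M=T^*Q$. By Definition~\ref{ctgl} applied to $N^{-1}$ this $\omega_1'$ is precisely the form occurring in the cotangent lift of $N^{-1}$, i.e.\ $\widetilde{N^{-1}}=\omega^{-1}\circ\omega_1'$. Since $\Pi=\omega^{-1}$ is nondegenerate, one then computes
\[
\Pi_1^{-1}=(\widetilde N\circ\Pi)^{-1}=\Pi^{-1}\circ\widetilde N^{-1}=\omega\circ\widetilde{N^{-1}}=\omega\circ\omega^{-1}\circ\omega_1'=\omega_1',
\]
so $\Pi_1=(\omega_1')^{-1}$ is in particular a well-defined (nondegenerate) Poisson bivector.

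It then remains to establish compatibility with $\Pi=\omega^{-1}$. Applying the equivalence quoted above to the pair of symplectic forms $(\omega,\omega_1')$, it suffices to check that $(\omega_1')^{-1}\circ\omega$ is a Nijenhuis tensor; but $(\omega_1')^{-1}\circ\omega=\Pi_1\circ\omega=\widetilde N\circ\Pi\circ\omega=\widetilde N$, which is Nijenhuis by ingredient (ii). Hence $\Pi$ and $\Pi_1$ are compatible Poisson bivectors in the sense of Definition~\ref{de.1.1}, which is the assertion. No deep difficulty arises; the only point requiring attention is the bookkeeping in the first step --- one must notice that it is $(N^t)^{-1}$, not $N^t$, whose pullback yields the symplectic form inverse to $\Pi_1$ --- together with observing that everything is globally defined, which is automatic since Definition~\ref{ctgl} and Lemma~\ref{tors} are stated globally. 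Alternatively one could deduce the claim from the general Poisson--Nijenhuis formalism, but the route through the cotangent lift and Lemma~\ref{tors} is entirely self-contained here.
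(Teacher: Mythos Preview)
Your proof is correct and essentially identical to the paper's: both compute $\Pi_1=(\omega\circ\widetilde{N^{-1}})^{-1}=\big(((N^{-1})^t)^*\omega\big)^{-1}$ to conclude that $\Pi_1$ is Poisson (the paper writes this as $N^t_*\Pi$, which is the same thing), and both then invoke Lemma~\ref{tors} to get that $\widetilde N$ is Nijenhuis and hence $\Pi$, $\Pi_1$ are compatible via the equivalence preceding Lemma~\ref{symplf}. The bookkeeping you flag about $(N^t)^{-1}$ versus $N^t$ is exactly the point, and you have it right.
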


\begin{proof} Obviously, $\widetilde{N}\circ\Pi=\widetilde{N}\circ \omega^{-1}=\big(\omega\circ \widetilde{N}^{-1}\big)^{-1}=\big(\omega\circ \widetilde{N^{-1}}\big)^{-1}=\big(\big((N^{-1})^t\big)^*\omega\big)^{-1}=N^t_*\omega^{-1}=N^t_*\Pi$, hence~$\Pi_1$ is a Poisson bivector. Since $\widetilde{N}$ is a Nijenhuis tensor, $\Pi$ and $\Pi_1$ are compatible.
\end{proof}

From now on we assume that $N$ is an invertible semisimple Nijenhuis $(1,1)$-tensor with constant eigenvalues $\lambda_1,\dots,\lambda_s$, $\lambda_i\not=\lambda_j$, $i\not=j$ (which are real in the real category) of multiplicities $k_1,\dots,k_s$ respectively and let $D_i \subset TQ$ to be the eigendistribution corresponding to the eigenvalue~$\lambda_i$. We also denote by $\widetilde{ D}_i \subset TM$ the eigendistribution of the $(1,1)$-tensor $\widetilde{N}$ corresponding to the eigenvalue $\lambda_i$ (of multiplicity~$2k_i$).

Let $L(N)$ stand for the Lie algebra of vector fields on $Q$ preserving $N$ (i.e., $V\in L(N)$ if and only if $\mathcal{L}_VN=0$) and let $\theta=\theta_1+\cdots+\theta_s$ be the decomposition of the canonical Liouville one-form~$\theta$ on $M:=T^*Q$ related to the decomposition $TM=\widetilde{ D}_1\oplus\cdots\oplus \widetilde{ D}_s$, i.e., $\theta_i|_{\widetilde{ D}_i}=\theta|_{\widetilde{ D}_i}$ and $\theta_i\big(\sum\limits_{j\not=i}\widetilde{ D}_j\big)=0$.

\begin{Lemma}\label{L56} Retain the assumptions above. Then the following statements hold.
\begin{enumerate}\itemsep=0pt
\item[$1.$] For any $i\in\{1,\dots,s\}$ the leaves of the symplectic foliation $\mathcal{F}_i$ of the Poisson bivector $\Pi^{\lambda_i}:=\Pi_1-\lambda_i\Pi$ are all of the same dimension and have codimension $2k_i$. For any such leaf $F$ its image $\pi(F)$ under the canonical projection $\pi\colon T^*Q\to Q$ is a leaf of the foliation tangent to the distribution $\check D_i:=\sum\limits_{j\not=i}D_j$ $($which is integrable by Lemma~{\rm \ref{L4})}. The leaf $\pi(F)$ is of codimension~$k_i$ in~$Q$. For any leaf $F_0$ of the foliation tangent to $\check D_i$ the set $\pi^{-1}(F_0)$ is a Poisson submanifold of the Poisson manifold $\big(T^*Q,\Pi^{\lambda_i}\big)$.
\item[$2.$] The vector fields from $L(N)$ tangent to the distribution $D_i$ form an ideal $L_i$ of the Lie algebra $L(N)$ and there is a direct decomposition $L(N)=L_1\oplus\cdots\oplus L_s$.
\item[$3.$] For any $V\in L(N)$ and $i\in\{1,\dots,s\}$ the function $f^i_V:=\theta_i\big(\widetilde{ V}\big)\in\mathcal{E}(T^*Q)$, where $\widetilde{ V}$ is the cotangent lift of $V$ $($see Definition~{\rm \ref{defi2})}, is a Casimir function of the bivector $\Pi^{\lambda_i}$. In particular, since $f^i_V$ linearly depends on $V\in L(N)$, for any leaf $F$ of the foliation $\mathcal{F}_i$ the formula $V\mapsto\Phi^i_F(V):=f^i_V|_F$ defines a linear functional on~$L(N)$.
 \item[$4.$] If $V\in L(N)$ is tangent to the leaf $\pi(F)$, where $F$ is a symplectic leaf of the Poisson bivector $\Pi^{\lambda_i}$, then $\Phi^i_F(V)=0$.
 \item[$5.$] Given any leaf $F_0 \subset Q$ of the foliation tangent to the distribution~$\check D_i$, a vector field \smash{$V\in L(N)$} is tangent to $F_0$ if and only if $\widetilde{V}$ is tangent to $\pi^{-1}(F_0)$.
\end{enumerate}
 \end{Lemma}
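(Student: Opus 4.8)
The plan is to settle the five items in turn, working throughout in local coordinates $(q^a,p_a)$ on $T^*Q$ adapted to the splitting $TQ=D_1\oplus\cdots\oplus D_s$. Such coordinates exist because, $N$ being semisimple Nijenhuis with constant distinct eigenvalues, every partial sum of the $D_i$ is integrable (as in the proof of Lemma~\ref{L4}, $\bigoplus_{i\in A}D_i=\bigcap_{k\notin A}\operatorname{im}\big(N-\lambda_kI\big)$; in particular $\check D_i=\operatorname{im}\big(N-\lambda_iI\big)$), so the decomposition is locally a product. Let $I_i$ index the $q^a$ tangent to $D_i$, so that $D_i=\operatorname{span}\{\partial/\partial q^a:a\in I_i\}$, the lifted tensor $\widetilde N$ is block‑diagonal with $\widetilde D_i=\operatorname{span}\{\partial/\partial q^a,\partial/\partial p_a:a\in I_i\}$ (by the formula following Definition~\ref{ctgl}), and $\theta_i=\sum_{a\in I_i}p_a\,{\rm d}q^a$. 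For item~1, by Lemma~\ref{symplf} the symplectic foliation $\mathcal{F}_i$ of $\Pi^{\lambda_i}$ is the one tangent to $\check{\widetilde D}_i:=\sum_{j\neq i}\widetilde D_j=\operatorname{im}\Pi^{\lambda_i}$, a distribution of constant corank $2k_i$, so its leaves have codimension $2k_i$; in the adapted chart a leaf $F$ is a common level set of the $q^a,p_a$ with $a\in I_i$, hence $\pi(F)$ is a common level set of the $q^a$, $a\in I_i$, i.e.\ a leaf tangent to $\check D_i$ of codimension $k_i$, and $\pi^{-1}(F_0)=\{q^a={\rm const}:a\in I_i\}$ is a union of such leaves $F$ — equivalently $\operatorname{im}\Pi^{\lambda_i}$ is tangent to it — hence a Poisson submanifold of $\big(T^*Q,\Pi^{\lambda_i}\big)$.

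For item~2, if $\mathcal{L}_VN=0$ the flow of $V$ commutes with $N$, hence with each spectral projector $P_i$, so $\mathcal{L}_VP_i=0$, and $V$ decomposes pointwise as $V=V_1+\cdots+V_s$ with $V_i:=P_iV\in\Gamma(D_i)$. A direct computation of $(\mathcal{L}_{V_i}P_j)|_{D_k}$ — using only the integrability of $D_k$ and of $D_i+D_k$ (Lemma~\ref{L4}) — shows that $\mathcal{L}_{V_i}P_j$ maps $D_j$ into $D_i$ and vanishes on $D_k$ for $k\neq j$; since $\mathcal{L}_VP_j=\sum_i\mathcal{L}_{V_i}P_j=0$ and the decomposition is direct, this forces $\mathcal{L}_{V_i}P_j=0$ for all $i,j$, so $V_i\in L(N)$ and $L(N)=\bigoplus_iL_i$. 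For $i\neq j$ and $V_i\in L_i$, $V_j\in L_j$, the bracket $[V_i,V_j]$ is tangent to both $D_i$ and $D_j$ (each of $V_i,V_j$ preserves the other's eigendistribution), hence lies in $\Gamma(D_i\cap D_j)=0$; thus $[L_i,L_j]=0$ and $[L(N),L_i]=\big[\bigoplus_jL_j,L_i\big]\subset L_i$, i.e.\ $L_i$ is an ideal.

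For item~3, a cotangent lift $\widetilde W$ has nonzero $\partial/\partial q^a$‑component only for $a$ in the block of $W$, while $\theta_i$ pairs only with the $\partial/\partial q^a$, $a\in I_i$; hence $\theta_i(\widetilde W)=0$ whenever $W\in\Gamma(D_j)$ with $j\neq i$, and (the cotangent lift being linear in the field) $f^i_V=\theta_i\big(\widetilde V\big)=\theta_i\big(\widetilde{V_i}\big)$ depends only on the $L_i$‑component of $V$. By item~2, in the coordinate form $\mathcal{L}_{V_i}P_j=0$, the coefficients $V_i^a$, $a\in I_i$, depend only on the $q^b$ with $b\in I_i$; therefore $f^i_V=\sum_{a\in I_i}p_aV_i^a$ is a function of precisely the coordinates that are constant along the leaves of $\mathcal{F}_i$, so ${\rm d}f^i_V$ annihilates $\operatorname{im}\Pi^{\lambda_i}$ and $f^i_V$ is a Casimir of $\Pi^{\lambda_i}$. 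Linearity of $V\mapsto f^i_V$ is immediate, so $V\mapsto\Phi^i_F(V)=f^i_V|_F\in\mathbb{K}$ is a well‑defined linear functional on $L(N)$.

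For item~4, if $V\in L(N)$ is tangent to $\pi(F)$ then, $\pi(F)$ being a leaf tangent to $\check D_i$ (item~1), the component $V_i$ vanishes along $\pi(F)$; since $f^i_V=\theta_i\big(\widetilde{V_i}\big)$ is a Casimir of $\Pi^{\lambda_i}$ it is constant on $F$ and equals its value at any $\alpha_0\in F$, which in coordinates is $\sum_{a\in I_i}(\alpha_0)_aV_i^a(\pi(\alpha_0))=0$ because $\pi(\alpha_0)\in\pi(F)$. For item~5, $\widetilde V$ is $\pi$‑related to $V$ (Definition~\ref{defi2}), so $\pi\circ\phi^{\widetilde V}_t=\phi^V_t\circ\pi$ for the local flows; as $\pi^{-1}(F_0)$ is $\pi$‑saturated and $\pi$ is a surjective submersion, the flow of $\widetilde V$ preserves $\pi^{-1}(F_0)$ if and only if the flow of $V$ preserves $F_0$, which is the asserted equivalence (equivalently, $\widetilde V(q^a)=V^a$ for $a\in I_i$ vanishes on $\pi^{-1}(F_0)=\{q^a={\rm const}:a\in I_i\}$ exactly when $V$ is tangent to $F_0$). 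The points with genuine content are the passage to adapted coordinates in the preamble (the integrability of all the sums $\bigoplus_{i\in A}D_i$, needed to rectify the whole decomposition at once) and item~2 — that the pointwise components $V_i$ of an $N$‑preserving field are again $N$‑preserving and have block‑local coefficients — on which item~3, and hence items~3–5, rest; once these are in hand, items~1, 4 and~5 are bookkeeping in the adapted chart.
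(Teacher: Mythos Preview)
Your proof is correct and follows the same overall strategy as the paper's: pass to local coordinates adapted to the eigendecomposition (so that $N$, $\widetilde N$ and the $\theta_i$ are block-diagonal) and read off all five items from explicit formulae. Your arguments for items~1, 3, 4 and~5 match the paper's essentially verbatim; the paper argues item~5 directly from the coordinate form of $\widetilde V$ rather than via flows, but your $\pi$-relatedness argument is equally valid.

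There is, however, a small slip in your item~2. Your claim that integrability alone gives ``$\mathcal{L}_{V_i}P_j$ vanishes on $D_k$ for $k\neq j$'' fails in the case $i=j$: for $X\in\Gamma(D_k)$ one has $(\mathcal{L}_{V_j}P_j)(X)=-P_j[V_j,X]$, and integrability of $D_j+D_k$ only yields $[V_j,X]\in D_j\oplus D_k$, so the $D_j$-component need not vanish a~priori. The repair is immediate: restricting the identity $\sum_i\mathcal{L}_{V_i}P_j=0$ to $D_k$ with $k\neq j$, all terms with $i\neq j$ \emph{do} vanish by your computation, so the lone surviving $i=j$ term is forced to vanish too; then on $D_j$ your directness argument goes through as written. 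The paper bypasses this by a shorter, purely coordinate computation: substituting $X=\partial/\partial q^{j^i_n}$ into $[V,NX]=N[V,X]$ shows directly that $[V,\partial/\partial q^{j^i_n}]\in D_i$, i.e.\ the block of coefficients $V^{j^i_1},\dots,V^{j^i_{k_i}}$ depends only on the variables $q^{j^i_1},\dots,q^{j^i_{k_i}}$. This single observation gives at once the direct-sum decomposition $L(N)=\bigoplus_iL_i$, the ideal property (different blocks commute since their coefficients involve disjoint variables), and the block-locality you need for item~3.
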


 \begin{proof} Recall that $k_i$ is the multiplicity of the eigenvalue $\lambda_i$, $i=1,\dots,s$. By Lemma \ref{L4} in a~vicinity of every point on $Q$ there exist a system of local coordinates $\big(q^{j^1_1},\dots,q^{j^1_{k_1}},\dots,q^{j^s_1},\dots,\allowbreak q^{j^s_{k_s}}\big)$, where $\big(j^1_1,\dots,j^1_{k_1},\dots,j^s_1,\dots,j^s_{k_s}\big)=(1,\dots,\dim Q)$, such that the eigendistribution~$D_i$ is spanned by the vector fields $\frac{\partial }{\partial q^{j^i_1}},\dots,\frac{\partial }{\partial q^{j^i_{k_i}}}$. Then $\widetilde{N}=\sum_i\lambda_i\big( \sum\limits_{n=1}^{k_i}\big(\frac{\partial}{\partial q^{j^i_n } }\otimes {\rm d}q^{j^i_n }+ \frac{\partial}{\partial p_{j^i_n } }\otimes {\rm d}p_{j^i_n}\big)\big)$ and by Lemma~\ref{symplf} the tangent space to the symplectic foliation of $\Pi^{\lambda_i}$ is generated by the vector fields $\frac{\partial }{\partial q^l}$, $\frac{\partial }{\partial p_m}$, $l,m\not\in\big\{j^i_1,\dots,j^i_{k_i}\big\}$ (the corresponding Casimir functions are $q^{j^i_n }$, $p_{j^i_n }$, $n=1,\dots,k_i$). On the other hand, the tangent distribution to the leaves of $\sum\limits_{j\not=i}D_j$ is spanned by the vector fields $\frac{\partial }{\partial q^l}$, $l\not\in\big\{j^i_1,\dots,j^i_{k_i}\big\}$. This proves the first assertion of the lemma.

To prove item~2 notice that, given a vector field $V=V^l(q)\frac{\partial }{\partial q^l}$, the equality $\mathcal{L}_VN=0$ holds if and only if $[V,NX]=N[V,X]$ for any vector field $X$. Substituting $X=\frac{\partial }{\partial q^{j^i_n}}$ to the last equality we get $\lambda_i\big[V,\frac{\partial }{\partial q^{j^i_n}}\big]=N\big[V,\frac{\partial }{\partial q^{j^i_n}}\big]$, which means that $\big[V,\frac{\partial }{\partial q^{j^i_n}}\big]$ is an eigenvector of $N$ corresponding to $\lambda_i$. Hence $\big[V,\frac{\partial }{\partial q^{j^i_n}}\big]$ is expressed as a linear combination of $\frac{\partial }{\partial q^{j^i_1}},\dots,\frac{\partial }{\partial q^{j^i_{k_i}}}$. In other words, the coefficients $V^{j^i_1},\dots,V^{j^i_{k_i}}$ depend only on the coordinates $q^i:=\big(q^{j^i_1},\dots,q^{j^i_{k_i}}\big)$ for any $i$.

For the proof of item~3 observe that $\theta_i=\sum\limits_{n=1}^{k_i}p_{j^i_n}\,{\rm d} q^{j^i_n}$ in the coordinates mentioned and the evaluation $f^i_V$ of this form on the cotangent lift
\begin{gather} \label{o}
 \widetilde{V}=V^l(q)\frac{\partial }{\partial q^l}-p_l\frac{\partial V^l(q)}{\partial q^n}\frac{\partial }{\partial p_n}
\end{gather}
of a vector field $V\in L(N)$ is equal to
\begin{gather*}
f^i_V=\sum_{n=1}^{k_i}V^{j^i_n}\big(q^i\big)p_{j^i_n}.
\end{gather*}
Any leaf $F_0$ of the foliation tangent to $\check D_i$ is given in these coordinates by the equations $q^{j^i_n}=c^{j^i_n}$, $n=1,\dots,k_i$, and any symplectic leaf $F \subset\pi^{-1}(F_0)$ of the foliation $\mathcal{F}_i$ by the equations $q^{j^i_n}=c^{j^i_n}$, $p_{j^i_n}=C_{j^i_n}$, $n=1,\dots,k_i$, whose right hand sides are some constants. This proves item~3.

If a vector field $V\in L(N)$ is tangent to $\pi(F)$, then
\begin{gather*}
 V^{j^i_n}\big(c^i\big)=0,\qquad n=1,\dots,k_i,
\end{gather*}
where we put $c^i:=\big(c^{j^i_1},\dots,c^{j^i_{k_i}}\big)$, in particular $\Phi^i_F=f^i_V|_F=0$.

The last item follows easily from formula (\ref{o}).
\end{proof}

\begin{Lemma}\label{lemk}Retaining the assumptions of the preceding lemma assume that a transitive left action $\rho\colon \mathfrak{g}\to\Gamma(TQ)$, $\xi\mapsto V_\xi$, of a Lie algebra $\mathfrak{g}$ on $Q$ is given such that $\rho$ preserves $N$, i.e., $\rho(\mathfrak{g}) \subset L(N)$.
 Denote by $\tilde\rho$ the extended cotangent action, $\tilde\rho(\xi)=\widetilde{\rho(\xi)}$, $\xi\in\mathfrak{g}$. $($Note that $\rho$ is an antihomomorphism, the map $V\mapsto \widetilde{V}$ is a homomorphism, hence $\tilde\rho$ is an antihomomorphism, i.e., a~left action.$)$ Given a leaf $F$ of the symplectic foliation $\mathcal{F}_i$, $i\in\{1,\dots,s\}$, let $\varphi^i_F:=\Phi^i_F\circ\rho\in\mathfrak{g}^*$ be the linear functional induced on~$\mathfrak{g}$ by the functional $\Phi_F^i\in(L(N))^*$ from Lemma~{\rm \ref{L56}(3)} and let~$\mathfrak{g}_F$ stand for the stabilizer algebra of~$F$, i.e., the set of elements $\xi\in\mathfrak{g}$ such that $\tilde{\rho}(\xi)$ is tangent to~$F$. Let $p_i\colon \Gamma(TQ)\to \Gamma(D_i)$ be the projection related to the decomposition $TQ=D_1\oplus\cdots\oplus D_s$. Then
 \begin{enumerate}\itemsep=0pt
 \item[$1)$] for any $i\in\{1,\dots,s\}$ the map $L(N)\to L_r$ induced by the projection $p_i$ is a homomorphism of Lie algebras; in particular $\rho_i\colon \mathfrak{g}\to\Gamma(TQ)$, where $\rho_i:=p_i\circ\rho$, is a left action of the Lie algebra~$\mathfrak{g}$;
 \item[$2)$] the action $\tilde\rho$ is Hamiltonian with respect to the Poisson structure $\Pi^\lambda$ for any $\lambda\not=\lambda_i$, $i=1,\dots,s$ with the momentum map $\mu_\lambda\colon T^*Q\to \mathfrak{g}^*$ given by
 \begin{gather}\label{mom0}
 \langle\mu_\lambda(x),\xi\rangle=(\psi(\lambda)^*\theta)(\tilde\rho(\xi))(x),\qquad \xi\in \mathfrak{g},
 \end{gather}
 where $\theta$ is the canonical Liouville $1$-form on $T^*Q$ and $\psi(\lambda)$ is the diffeomorphism of $T^*Q$ given by $\big((N-\lambda I)^t\big)^{-1}$ $($we used the notation $(\cdot)^t$ for the transposed map$)$; equivalently, $\langle\mu_\lambda(x),\xi\rangle=\sum\limits_{i=1}^s\frac{f^i_{\rho(\xi)}(x)}{\lambda_i-\lambda}$, see Lemma~{\rm \ref{L56}(3)} for the definition of~$f^i_{V}$; moreover,
 \begin{gather}\label{mom}
 \mu_\lambda=\mu_{\rm can}\circ\psi(\lambda),
 \end{gather}
 where $\mu_{\rm can}$ is the moment map corresponding to the canonical Poisson bivector~$\Pi$;
 \item[$3)$] given a leaf $F$ of the symplectic foliation $\mathcal{F}_i$, the restricted action
 of $\mathfrak{g}_F$ on $F$ is Hamiltonian with respect to the restriction of the Poisson structure $\Pi^{\lambda_i}$ to $F$ with the momentum map $\mu_{\lambda_i}^F\colon F\to \mathfrak{g}^*$ given by $\big\langle\mu_{\lambda_i}^F(x),\xi\big\rangle=(\psi_{\lambda_i}^*\theta)(\tilde\rho(\xi))(x)$, $\xi\in \mathfrak{g}_F$, where~$\theta$ is the canonical Liouville $1$-form on $T^*Q$ and $\psi_{\lambda_i}$ is the smooth map of $T^*Q$ given by $\psi_{\lambda_j}|_{D^*_i}=\big((N-\lambda_i I)^t\big)^{-1}|_{D^*_i}$, $j\not=i$, $\psi_{\lambda_i}|_{D^*_i}=0$; here $T^*Q=D^*_1\oplus\cdots\oplus D^*_s$ is the decomposition corresponding to $TQ=D_1\oplus\cdots\oplus D_s$;\footnote{Here an equivalent description of the momentum map similar to that from item~2 is also possible: $\langle\mu_{\lambda_i}^F(x),\xi\rangle=\sum\limits_{j=1}^s\frac{f^j_{\rho(\xi)}(x)}{\lambda_j-\lambda_i}$ (note that the $i$-th term in the sum is correctly defined since $f^i_{\tilde\rho(\xi)}$ vanishes for $\xi\in \mathfrak{g}_F$, cf.\ Lemma~\ref{L56}(4)).}
\item[$4)$] the cotangent extension $\tilde\rho_i$, $\tilde\rho_i(\xi):=\widetilde{\rho_i(\xi)}$, of the action $\rho_i$ defined in item~$1$ is Hamiltonian with respect to the canonical Poisson bivector $\Pi$ with the momentum map $\nu_i\colon T^*Q\to\mathfrak{g}^*$, $\langle\nu_i(x),\xi\rangle={f^i_{\rho(\xi)}(x)}$;
 \item[$5)$] for any leaf $F_0 \subset Q$ of the foliation tangent to the distribution $\check D_i$ its stabilizer algebra $\mathfrak{g}_{F_0}$ with respect to the action $\rho$, i.e., the set of $\xi\in \mathfrak{g}$ such that $\rho(\xi)$ is tangent to $F_0$, coincides with the stabilizer algebra of the submanifold $\pi^{-1}(F_0)$ with respect to the action $\tilde\rho$, i.e., the set of $\xi\in \mathfrak{g}$ such that $\tilde\rho(\xi)$ is tangent to $\pi^{-1}(F_0)$;
 \item[$6)$] the following inclusion holds: $\nu_i\big(\pi^{-1}(F_0)\big)\subset \mathfrak{g}_{F_0}^\bot\cong (\mathfrak{g}/\mathfrak{g}_{F_0})^*$;
\item[$7)$] moreover, the relation $F\mapsto\varphi_F^i=\nu_i|_F$ is an $\mathfrak{g}_{F_0}$-equivariant one-to-one correspondence between the symplectic leaves $F$ of $\Pi^{\lambda_i}$ such that $\pi(F)=F_0$ and linear functionals from a $k_i$-dimensional linear subspace in $(\mathfrak{g}/\mathfrak{g}_{F_0})^*$ $($which in fact coincides with $(\mathfrak{g}/\mathfrak{g}_{F_0})^*$, see Lemma~{\rm \ref{stab}(4))}.
\item[$8)$] the stabilizer algebra $\mathfrak{g}_F \subset \mathfrak{g}$ of a leaf $F \subset \pi^{-1}(F_0)$ with respect to~$\tilde\rho$ is equal to the stabilizer algebra of the functional $\varphi_F^i\in(\mathfrak{g}/\mathfrak{g}_{F_0})^*$ with respect to the action of~$\mathfrak{g}_{F_0}$.
 \end{enumerate}
\end{Lemma}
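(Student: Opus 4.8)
The plan is to carry out the comparison in the adapted local coordinates used in the proof of Lemma~\ref{L56}, matching term by term the condition that $\widetilde{V_\xi}=\tilde\rho(\xi)$ be tangent to $F$ with the condition that $\xi$ annihilate $\varphi_F^i$ under the coaction of $\mathfrak{g}_{F_0}$. Near a point of $F_0$ I fix coordinates $(q^\alpha)$ in which $D_r$ is spanned by $\partial/\partial q^{j^r_1},\dots,\partial/\partial q^{j^r_{k_r}}$ for every $r$, so that $F_0=\{q^{j^i_n}=c^{j^i_n}\}$ and $F=\{q^{j^i_n}=c^{j^i_n},\ p_{j^i_n}=C_{j^i_n}\}$. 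Applying Lemma~\ref{L56}(2) to each eigenvalue separately gives the sharpened fact that for $V\in L(N)$ the component $V^{j^r_n}$ depends only on the block $q^r:=(q^{j^r_1},\dots,q^{j^r_{k_r}})$; in particular, for $V_\xi=\rho(\xi)$, the components outside the $i$-th block are independent of $q^i$, while $V_\xi^{j^i_n}$ depends only on $q^i$. I will use this throughout.

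First, $\mathfrak{g}_F\subset\mathfrak{g}_{F_0}$, because if $\widetilde{V_\xi}$ is tangent to $F$ then $\rho(\xi)=\pi_*\widetilde{V_\xi}$ is tangent to $\pi(F)=F_0$. For $\xi\in\mathfrak{g}_{F_0}$ I then write out the cotangent lift~(\ref{o}) and impose tangency of $\widetilde{V_\xi}$ to $F$. Vanishing of $\widetilde{V_\xi}(q^{j^i_n})$ on $F$ gives back $V_\xi^{j^i_n}(c)=0$, i.e.\ $\xi\in\mathfrak{g}_{F_0}$. For $\widetilde{V_\xi}(p_{j^i_n})=-\sum_l p_l\,\partial_{q^{j^i_n}}V_\xi^{l}$ the terms with $l$ outside the $i$-th block drop out, since $V_\xi^l$ is then independent of $q^i$; so vanishing on $F$ becomes the linear system $\sum_m C_{j^i_m}\,\partial_{q^{j^i_n}}V_\xi^{j^i_m}(c)=0$, $n=1,\dots,k_i$. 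Thus $\mathfrak{g}_F$ is exactly the subspace of $\mathfrak{g}_{F_0}$ cut out by these $k_i$ equations.

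On the other side, Lemma~\ref{lemk}(7),(4) give $\varphi_F^i=\nu_i|_F$ with $\langle\nu_i(x),\eta\rangle=\sum_n V_\eta^{j^i_n}(q^i)\,p_{j^i_n}$, so $\langle\varphi_F^i,\eta\rangle=\sum_n V_\eta^{j^i_n}(c)\,C_{j^i_n}$; since $\mathfrak{g}_{F_0}$ is a subalgebra annihilated by $\varphi_F^i$, the stabilizer in question is $\{\xi\in\mathfrak{g}_{F_0}\colon \langle\varphi_F^i,[\xi,\eta]\rangle=0\ \text{for all}\ \eta\in\mathfrak{g}\}$. I would expand $\langle\varphi_F^i,[\xi,\eta]\rangle$ using $\rho([\xi,\eta])=-[V_\xi,V_\eta]$ (recall $\rho$ is an antihomomorphism): because $V_\xi^{j^i_n}$ and $V_\eta^{j^i_n}$ depend only on $q^i$ and $V_\xi^{j^i_n}(c)=0$, at $q^i=c$ the bracket collapses to $[V_\xi,V_\eta]^{j^i_n}(c)=-\sum_m V_\eta^{j^i_m}(c)\,\partial_{q^{j^i_m}}V_\xi^{j^i_n}(c)$, whence $\langle\varphi_F^i,[\xi,\eta]\rangle=\sum_{n,m}C_{j^i_n}\,V_\eta^{j^i_m}(c)\,\partial_{q^{j^i_m}}V_\xi^{j^i_n}(c)$. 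By transitivity of $\rho$ the vector $(V_\eta^{j^i_m}(c))_m$ exhausts $\mathbb{R}^{k_i}$ as $\eta$ runs over $\mathfrak{g}$, so this expression vanishes for all $\eta$ if and only if $\sum_n C_{j^i_n}\,\partial_{q^{j^i_m}}V_\xi^{j^i_n}(c)=0$ for all $m$, which is the previous system of $k_i$ equations with the roles of $m$ and $n$ interchanged. Hence $\mathfrak{g}_F$ coincides with the stabilizer of $\varphi_F^i$ in $\mathfrak{g}_{F_0}$.

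I expect the only real difficulty to be bookkeeping: tracking which component depends on which coordinate block (the sharpened form of Lemma~\ref{L56}(2)) and verifying that evaluation on $F$ yields precisely the two matching linear systems, with no terms lost. As a coordinate-free cross-check I would also argue as follows: by Lemma~\ref{lemk}(1) split $\widetilde{V_\xi}=\sum_r\widetilde{\rho_r(\xi)}$; for $\xi\in\mathfrak{g}_{F_0}$ each summand with $r\ne i$ involves only the coordinate blocks $\ne i$, hence is automatically tangent to $F$ and kills every component of $\nu_i$, so $(\nu_i)_*\widetilde{V_\xi}=(\nu_i)_*\widetilde{\rho_i(\xi)}=-\operatorname{ad}^*_\xi\nu_i$ by equivariance of the momentum map $\nu_i$ of Lemma~\ref{lemk}(4). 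Since $\widetilde{V_\xi}$ is tangent to $\pi^{-1}(F_0)$ by Lemma~\ref{lemk}(5), and $\nu_i|_{\pi^{-1}(F_0)}$ is a submersion onto $(\mathfrak{g}/\mathfrak{g}_{F_0})^*$ whose fibres are the leaves $F$ (Lemma~\ref{lemk}(7) together with linearity of $\nu_i$ in the momenta), it follows that $\widetilde{V_\xi}$ is tangent to $F=\nu_i^{-1}(\varphi_F^i)$ if and only if $\operatorname{ad}^*_\xi\varphi_F^i=0$, i.e.\ exactly when $\xi$ lies in the claimed stabilizer.
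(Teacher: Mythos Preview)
Your proposal addresses only item~8, taking items~1--7 as given; for item~8 it is correct. The paper's proof of item~8 is a single line: ``the last item follows from item~7.'' Your coordinate-free cross-check at the end is exactly that argument unpacked: by item~7 the assignment $F\mapsto\varphi_F^i=\nu_i|_F$ is a $\mathfrak{g}_{F_0}$-equivariant bijection between the symplectic leaves over $F_0$ and points of $(\mathfrak{g}/\mathfrak{g}_{F_0})^*$, hence stabilizers correspond.

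Your first argument, the explicit coordinate comparison, is a genuinely different route. It bypasses the equivariance of item~7 and checks directly that the tangency conditions $\sum_m C_{j^i_m}\,\partial_{q^{j^i_n}}V_\xi^{j^i_m}(c)=0$ and the coadjoint-stabilizer conditions $\sum_n C_{j^i_n}\,\partial_{q^{j^i_m}}V_\xi^{j^i_n}(c)=0$ are the same linear system after swapping the dummy indices. This is more hands-on and independent of item~7; the paper's one-line deduction, by contrast, explains conceptually why the two stabilizers agree (they are stabilizers of corresponding points under an equivariant bijection), which is the viewpoint actually used downstream in the proof of Theorem~\ref{kro}. One small remark: the paper's own proof of item~2 writes $V_{[\xi,\zeta]}=[V_\xi,V_\zeta]$ rather than the antihomomorphism sign you use; the discrepancy is harmless here since the resulting linear system is homogeneous, but you may want to align conventions.
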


\begin{proof} The claim of item~1 follows from the fact that each $L_i$ is an ideal in $L(N)$ (see Lemma~\ref{L56}(2)).

To prove items~2 and~3 use coordinates from the proof of the previous lemma. We have the following formulas: $\Pi=\sum\limits_i \sum\limits_{n=1}^{k_i}\frac{\partial }{\partial p_{j^i_n}}\wedge \frac{\partial}{\partial q^{j^i_n } }$, $\Pi_1=\sum\limits_i\lambda_i\big( \sum\limits_{n=1}^{k_i}\frac{\partial }{\partial p_{j^i_n }}\wedge \frac{\partial}{\partial q^{j^i_n } }\big)$,
$\Pi^{\lambda}=\sum\limits_i(\lambda_i-\lambda)\big( \sum\limits_{n=1}^{k_i}\frac{\partial }{\partial p_{j^i_n }}\wedge \frac{\partial}{\partial q^{j^i_n } }\big)$,
and $\Pi^{\lambda_j}=\sum\limits_{i\not=j}(\lambda_i-\lambda_j)\big( \sum\limits_{n=1}^{k_i}\frac{\partial }{\partial p_{j^i_n }}\wedge \frac{\partial}{\partial q^{j^i_n } }\big)$.
For the vector field $V=V_\xi=\sum\limits_i \sum\limits_{n=1}^{k_i}V_\xi^{j^i_n }\big(q^i\big)\frac{\partial}{\partial q^{j^i_n } }$ its cotangent lift $\widetilde{V}=\widetilde{V}_\xi$ takes the form
\begin{gather*}
\widetilde{V}=\sum_i \sum_{n=1}^{k_i}V_\xi^{j^i_n }\big(q^i\big)\frac{\partial}{\partial q^{j^i_n } }-\sum_{i} \sum_{n,m=1}^{k_i}p_{j^i_n }\frac{\partial V_\xi^{j^i_n }\big(q^i\big)}{\partial q^{j^i_m }}\frac{\partial }{\partial p_{j^i_m }}
\end{gather*}
and is a Hamiltonian vector field with respect to~$\Pi$: $\widetilde{V}=\Pi(H_\xi)$, where $H_\xi=\theta\big(\widetilde{V}\big)=\sum\limits_i \sum\limits_{n=1}^{k_i}V_\xi^{j^i_n }\big(q^i\big)p_{j^i_n }$ (cf.\ Remark~\ref{reem}). On the other hand, obviously,
$\widetilde{V}=\Pi^\lambda(H^\lambda_\xi)$, where we put
\begin{gather*}
H^\lambda_\xi=\sum_i \sum_{n=1}^{k_i}V_\xi^{j^i_n }\big(q^i\big)p_{j^i_n }/(\lambda_i-\lambda).
\end{gather*}
In fact, the functions $H^\lambda_\xi$ are global and correctly defined (i.e., they do not depend on the choices of local coordinates), which can be seen from the equality $H^\lambda_\xi=(\psi(\lambda)^*\theta)\big(\widetilde{V}_\xi\big)$. Yet another description of the function $H^\lambda_\xi$ is as follows: $H^\lambda_\xi=\sum_i f^i_{V_\xi}/(\lambda_i-\lambda)$ (see Lemma~\ref{L56}(3) for the definition of~$f^i_{V_\xi}$).

Using the equality $V_{[\xi,\zeta]}=[V_\xi,V_\zeta]$ we get
\begin{gather*}
 \Pi^{\lambda} \big(H^\lambda_\xi\big)H^\lambda_\zeta - H^\lambda_{[\xi,\zeta] } =
 \sum_i \sum_{n,m=1}^{k_i}V_\xi^{j^i_n }\big(q^i\big)\frac{\partial V_\zeta^{j^i_m }\big(q^i\big)}{\partial q^{j^i_n } }\frac{p_{j^i_m }}{\lambda_i-\lambda}\\
\qquad {} -\sum_{i} \sum_{n,m=1}^{k_i}p_{j^i_n }\frac{\partial V_\xi^{j^i_n }\big(q^i\big)}{\partial q^{j^i_m }}V_\zeta^{j^i_m }\big(q^i\big)\frac{1}{\lambda_i-\lambda}
 -\sum_i \sum_{n=1}^{k_i}V_{[\xi,\zeta]}^{j^i_n }\big(q^i\big)\frac{p_{j^i_n }}{\lambda_i-\lambda} = 0,
\end{gather*}
which proves the hamiltonicity of $\tilde\rho$ with respect to $\Pi^\lambda$, $\lambda\not=\lambda_i$.

Formula (\ref{mom}) is a consequence of (\ref{mom0}) as $\langle\mu_{\rm can}(x),\xi\rangle=\theta(\tilde\rho(\xi))(x)$.

Now assume that $V=V_\xi$ is tangent to the symplectic leaf $F$ given in the local coordinates by the equations $q^{j^i_n}=\mathrm{const}$, $p_{j^i_n}=\mathrm{const}$, $n=1,\dots,k_i$. Then by~(\ref{o}) we get
\begin{gather*}
\widetilde{V}|_F=\sum_{l\not=i} \sum_{n=1}^{k_l}V_\xi^{j^l_n }\big(q^l\big)\frac{\partial}{\partial q^{j^l_n } }-\sum_{l\not=i} \sum_{n,m=1}^{k_l}p_{j^l_n }\frac{\partial V_\xi^{j^l_n }\big(q^l\big)}{\partial q^{j^l_m }}\frac{\partial }{\partial p_{j^l_m }}=
\Pi^{\lambda_i}(H_\xi)
 \big|_F ,
\end{gather*}
where $H_\xi=\sum\limits_{l\not=i} \sum\limits_{n=1}^{k_l}V_\xi^{j^l_n }\big(q^l\big)p_{j^l_n }/(\lambda_l-\lambda_i)$. The function $H_\xi$ is global and correctly defined for any~$\xi$ as $H_\xi=\sum\limits_{l\not=i} f^l_{V_\xi}/(\lambda_l-\lambda_i)$ and
\begin{gather*}
 \Pi^{\lambda_i} (H_\xi)H_\zeta - H_{[\xi,\zeta] } =
 \sum_{l\not=i} \sum_{n,m=1}^{k_l}V_\xi^{j^l_n }\big(q^l\big)\frac{\partial V_\zeta^{j^l_m }\big(q^l\big)}{\partial q^{j^l_n } }\frac{p_{j^l_m }}{\lambda_l-\lambda_i}\\
 \qquad{} -\sum_{l\not=i} \sum_{n,m=1}^{k_l}p_{j^l_n }\frac{\partial V_\xi^{j^l_n }\big(q^l\big)}{\partial q^{j^l_m }}V_\zeta^{j^l_m }\big(q^l\big)\frac{1}{\lambda_l-\lambda_i}
 -\sum_{l\not=i} \sum_{n=1}^{k_l}V_{[\xi,\zeta]}^{j^l_n }\big(q^l\big)\frac{p_{j^l_n }}{\lambda_l-\lambda_i} = 0.
\end{gather*}
Since $F$ is a Poisson submanifold with respect to $\Pi^{\lambda_i}$, we have
\begin{gather*} \{H_\xi|_F,H_\zeta|_F\}_{\Pi^{\lambda_j}|_F}= \{H_\xi,H_\zeta\}_{\Pi^{\lambda_j}}|_F= H_{[\xi,\zeta] }|_F.\end{gather*}

To prove item~4 notice that, if $V_\xi=\sum\limits_i \sum\limits_{n=1}^{k_i}V_\xi^{j^i_n }\big(q^i\big)\frac{\partial}{\partial q^{j^i_n } }$, $\xi\in\mathfrak{g}$, is the fundamental vector field of the action $\rho$, then $\rho_i(\xi)=\sum\limits_{n=1}^{k_i}V_\xi^{j^i_n }\big(q^i\big)\frac{\partial}{\partial q^{j^i_n } }$ is the fundamental vector field of the action $\rho_i$. Its cotangent lift $\tilde{\rho}_i(\xi)$ is a Hamiltonian vector field with respect to $\Pi$ with the Hamiltonian function $H_\xi^i:=f^i_{\rho_i(\xi)}=f^i_{V_\xi}=\sum\limits_{n=1}^{k_i}V_\xi^{j^i_n }\big(q^i\big)p_{j^i_n}$. Now it remains to use the equality $\rho_i([\xi,\zeta])=[\rho_i(\xi),\rho_i(\zeta)]$, which implies
\begin{gather*}
 \Pi \big(H^i_\xi\big)H^i_\zeta - H^i_{[\xi,\zeta] } =
 \sum_{n,m=1}^{k_i}V_\xi^{j^i_n }\big(q^i\big)\frac{\partial V_\zeta^{j^i_m }\big(q^i\big)}{\partial q^{j^i_n } }{p_{j^i_m }}- \sum_{n,m=1}^{k_i}p_{j^i_n }\frac{\partial V_\xi^{j^i_n }\big(q^i\big)}{\partial q^{j^i_m }}V_\zeta^{j^i_m }\big(q^i\big)\\
 \hphantom{\Pi \big(H^i_\xi\big)H^i_\zeta - H^i_{[\xi,\zeta] } =}{} - \sum_{n=1}^{k_i}V_{[\xi,\zeta]}^{j^i_n }\big(q^i\big){p_{j^i_n }} = 0.
\end{gather*}

Item~5 follows from Lemma \ref{L56}(5) and item~6 follows from Lemma~\ref{L56}(4) in view of the fact that $\pi^{-1}(F_0)$ is foliated by the symplectic leaves of the Poisson bivector $\Pi^{\lambda_i}$ (see Lemma~\ref{L56}(1)) and from the equality $\Phi^i_F(\rho_i(\xi)):=f^i_{\rho_i(\xi)}|_F$, where $F$ is any such leaf.

To prove item~7 first notice that the $\mathfrak{g}_{F_0}$-equivariance follows from $\mathfrak{g}$-equivariance of the moment map $\nu_i$. Now recall (see the proof of Lemma \ref{L56}) that
\begin{gather*}
\varphi_F^i(\xi)=\sum_{n=1}^{k_i}V_\xi^{j^i_n}\big(c^i\big)C_{j^i_n}, \qquad \xi\in\mathfrak{g},
\end{gather*}
where the constants $c^{i}$, $C_{j^i_n}$ specify the particular leaf~$F$ and $V_\xi^{j^i_n}$ are the coefficients of the fundamental vector field $\rho(\xi)=V_\xi^l(q)\frac{\partial }{\partial q^l}$.

Now fix a leaf $F_0 $ of the foliation tangent to a distribution $\check D_i$, i.e., fix constants $\big(c^{i}\big)$. For any $\xi\in\mathfrak{g}$ we have a linear map\footnote{Note that although the range of constants~$c^i$ is bounded by that of the local coordinates~$q^i$, the constants~$C_{j^i_n}$ can take any value.}
\begin{gather*}
\big(C_{j^i_1},\dots,C_{j^i_{k_i}}\big)\mapsto \sum_{n=1}^{k_i}V_\xi^{j^i_n}\big(c^i\big)C_{j^i_n},
\end{gather*}
expressing the correspondence $F\mapsto \varphi_F^i(\xi)$, where $k_i=\operatorname{corank} \check D_i$. Thus the claim of item~7 is equivalent to the nondegeneracy of the following matrix
\begin{gather*}
\left[
 \begin{matrix}
 V_{\xi_1}^{j^i_1}\big(c^i\big) & \cdots & V_{\xi_1}^{j^i_{k_i}}\big(c^i\big) \\
 \vdots & & \vdots \\
 V_{\xi_{k_i}}^{j^i_1}\big(c^i\big) & \cdots & V_{\xi_{k_i}}^{j^i_{k_i}}\big(c^i\big)
 \end{matrix}
\right],
\end{gather*}
where $\xi_1,\dots,\xi_{k_i}\in\mathfrak{g}$ are linearly independent elements not belonging to $\mathfrak{g}_{F_0}$. In turn, the nondegeneracy of this matrix follows from the fact that $\mathfrak{g}$ acts transitively on~$G/K$ and, as a~consequence, on the space of leaves of the foliation tangent to the distribution~$\check D_i$.

Finally the last item follows from item~7.
\end{proof}

Now we apply the preceding results to homogeneous spaces. Let $G/K$ be a homogeneous space and let $N$ be an $G$-invariant semisimple Nijenhuis $(1,1)$-tensor on $G/K$ with the real spectrum $\{\lambda_1,\dots,\lambda_s\}$. Then by Theorem \ref{T1} there exists a decomposition $\mathfrak{g} = \mathfrak{g}_1 + \cdots+ \mathfrak{g}_s $ to the sum of subspaces such that
\begin{enumerate}\itemsep=0pt
\item[1)] $ \forall_{i,j\in \{1,\dots,s\}, i\not=j }$ $\mathfrak{g}_i\cap\mathfrak{g}_j = \mathfrak{k}$;
\item[2)] $ \forall_{i,j\in \{1,\dots,s\} }$ $\mathfrak{g}_i+\mathfrak{g}_j $ are Lie subalgebras in $\mathfrak{g}$;
\item[3)] the decomposition above induces the decomposition $T(G/K)=D_1\oplus\cdots\oplus D_s$ to integrable subbundles and $N|_{D_i}=\lambda_i\operatorname{Id}_{D_i}$.
\end{enumerate}
 Write $P\colon G\to G/K$ and $\pi\colon T^*(G/K)\to G/K$ for the canonical projections.

By the construction from the proof of Lemma \ref{L5} the eigendistribution $D_i$ of $N$ corresponding to the eigenvalue $\lambda_i$ is equal $P_*\widehat D_i$, where $\widehat D_i$ is the left invariant distribution on $G$ obtained from the subspace $\mathfrak{g}_i \subset\mathfrak{g}\cong T_eG$. In particular, since $\ker P_*$ is the left invariant distribution obtained from the subspace $\mathfrak{k} \subset\mathfrak{g}_i \subset\mathfrak{g}\cong T_eG$, the rank of~$D_i$, i.e., the multiplicity $k_i$ of the eigenvalue~$\lambda_i$, is equal to $\dim (\mathfrak{g}_i/\mathfrak{k})$.

Denote $\check{\mathfrak{g}}_i:=\sum\limits_{j\not=i}\mathfrak{g}_j$ (this is a Lie subalgebra in~$\mathfrak{g}$ by condition~2) and let~$\check G_i$ be the corresponding subgroup in $G$. By Lemma~\ref{L5} the leaves of the foliation integrating the distribution
$\check D_i:=\sum\limits_{j\not=i}D_j$ are the projections with respect to $P$ of the left cosets $g \check G_i$, $g\in G$. Let $p_i\colon TQ\to D_i$ be the projection related to the decomposition $TQ=D_1\oplus\cdots\oplus D_s$.

\begin{Lemma}\label{stab}Let $N$ be an invertible Nijenhuis $(1,1)$-tensor on a homogeneous space $Q=G/K$ satisfying the assumptions above. Let $\Pi$ be the canonical poisson bivector on $T^*(G/K)$ and $\Pi_1=\widetilde{N}\circ\Pi$ $($see Lemma~{\rm \ref{constt})}. Then
 \begin{enumerate}\itemsep=0pt
 \item[$1)$] for any symplectic leaf $F$ of the Poisson bivector $\Pi^{\lambda_i}:=\Pi_1-\lambda_i\Pi$ there exists an element $g\in G$ such that $\pi(F)=P\big(g\check G_i\big)$; such element $g$ is unique modulo right multiplication by $h\in\check G_i$;
 \item[$2)$] the stabilizer algebra $\mathfrak{g}_{\pi(F)} \subset\mathfrak{g}$ of the leaf $\pi(F)=P\big(g\check G_i\big)$ of the foliation tangent to the distribution $\check D_i$ with respect to the $G$-action on $G/K$ is equal to $\operatorname{Ad}_g\check{\mathfrak{g}}_i$;
 \item[$3)$] the stabilizer algebra $\mathfrak{g}_{F} \subset\mathfrak{g}$ of the leaf $F$ with respect to the extended $G$-action on $T^*(G/K)$ is equal to the stabilizer algebra $\mathfrak{g}^{\varphi_F^i} \subset\operatorname{Ad}_g\check{\mathfrak{g}}_i$ of the functional $\varphi_F^i\in(\mathfrak{g}/\operatorname{Ad}_g\check{\mathfrak{g}}_i)^*$ constructed in Lemma~{\rm \ref{lemk}} by means of an action $\rho$, where we specify $\rho\colon \mathfrak{g}\to\Gamma(T(G/K))$ to be the natural action of the Lie algebra $\mathfrak{g}$ on $G/K$;
 \item[$4)$] if $F_0 \subset G/K$ is a fixed leaf of the foliation tangent to the distribution $\check D_i$, $F_0=P\big(g\check G_i\big)$ $(g$~fixed$)$, the relation $F\mapsto\varphi_F^i$ is an $\operatorname{Ad}_g\check{\mathfrak{g}}_i$-equivariant one-to-one correspondence between the symplectic leaves $F$ of $\Pi^{\lambda_i}$ such that $\pi(F)=F_0$ and linear functionals from $(\mathfrak{g}/\operatorname{Ad}_g\check{\mathfrak{g}}_i)^*$.
 \end{enumerate}
\end{Lemma}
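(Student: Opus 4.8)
The strategy is to deduce all four assertions by specializing Lemmas~\ref{L56} and~\ref{lemk} to $Q=G/K$ with $\rho$ taken to be the natural infinitesimal action of $\mathfrak{g}$ on $G/K$, and then to identify the abstract objects occurring there with the concrete Lie-algebraic data supplied by Theorem~\ref{T1} and Lemma~\ref{L5}. First one records that the hypotheses of Lemma~\ref{lemk} are met: $\rho$ is transitive, and $G$-invariance of $N$ is precisely the statement $\rho(\mathfrak{g})\subset L(N)$; moreover, as noted before the statement, the eigendistribution $D_i$ equals $P_*\widehat D_i$ with $\widehat D_i$ the left-invariant distribution generated by $\mathfrak{g}_i$, so $\check D_i=\sum_{j\ne i}D_j$ is exactly the distribution considered in Lemma~\ref{L5} relative to the subalgebra $\check{\mathfrak{g}}_i$. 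Item~1 then follows at once: by Lemma~\ref{L56}(1) the image $\pi(F)$ of a symplectic leaf $F$ of $\Pi^{\lambda_i}$ is a leaf of the foliation tangent to $\check D_i$, and by Lemma~\ref{L5}(1) every such leaf has the form $P(g\check G_i)$; since $\mathfrak{k}=\mathfrak{g}_i\cap\mathfrak{g}_j\subset\mathfrak{g}_j\subset\check{\mathfrak{g}}_i$ for $j\ne i$, one has $K\subset\check G_i$, so that $P(g\check G_i)=P(g'\check G_i)$ forces $g^{-1}g'\in\check G_i$, giving the uniqueness claim.

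For item~2 I would apply Lemma~\ref{L5}(2) with $\mathfrak{h}=\check{\mathfrak{g}}_i$, $H=\check G_i$: the fundamental vector field $X_\xi$ is tangent to $P(g\check G_i)$ if and only if $\xi\in\operatorname{Ad}_g\check{\mathfrak{g}}_i$. As the proof of Lemma~\ref{L5} shows, tangency of $X_\xi=P_*\xi_R$ at one point of the leaf is equivalent to tangency along the whole leaf, so the flow of $X_\xi$ preserves $P(g\check G_i)$ precisely when $\xi\in\operatorname{Ad}_g\check{\mathfrak{g}}_i$, which is by definition the stabilizer algebra of this leaf. Item~3 is then Lemma~\ref{lemk}(8) read for this choice of $\rho$: the stabilizer algebra $\mathfrak{g}_F$ of a leaf $F\subset\pi^{-1}(F_0)$ with respect to $\tilde\rho$ equals the stabilizer of $\varphi_F^i\in(\mathfrak{g}/\mathfrak{g}_{F_0})^*$ under $\mathfrak{g}_{F_0}$, and item~2 identifies $\mathfrak{g}_{F_0}$ with $\operatorname{Ad}_g\check{\mathfrak{g}}_i$, yielding $\mathfrak{g}_F=\mathfrak{g}^{\varphi_F^i}\subset\operatorname{Ad}_g\check{\mathfrak{g}}_i$.

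Finally, for item~4, Lemma~\ref{lemk}(7) already gives that $F\mapsto\varphi_F^i$ is an $\operatorname{Ad}_g\check{\mathfrak{g}}_i$-equivariant bijection from the symplectic leaves $F$ of $\Pi^{\lambda_i}$ with $\pi(F)=F_0$ onto a $k_i$-dimensional subspace of $(\mathfrak{g}/\operatorname{Ad}_g\check{\mathfrak{g}}_i)^*$, so it remains only to check $\dim(\mathfrak{g}/\operatorname{Ad}_g\check{\mathfrak{g}}_i)=k_i$. For this I would use the direct-sum property $\mathfrak{g}/\mathfrak{k}=\bigoplus_j(\mathfrak{g}_j/\mathfrak{k})$ guaranteed by Theorem~\ref{T1}: with $k_j=\dim(\mathfrak{g}_j/\mathfrak{k})$ it gives $\dim\mathfrak{g}=\dim\mathfrak{k}+\sum_j k_j$ and $\dim\check{\mathfrak{g}}_i=\dim\mathfrak{k}+\sum_{j\ne i}k_j$, whence $\dim\mathfrak{g}-\dim\operatorname{Ad}_g\check{\mathfrak{g}}_i=k_i$ and the $k_i$-dimensional subspace is all of $(\mathfrak{g}/\operatorname{Ad}_g\check{\mathfrak{g}}_i)^*$. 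There is no serious obstacle here: the whole argument is an exercise in matching the bookkeeping of Lemmas~\ref{L56}--\ref{lemk} with the homogeneous-space picture. The two places needing a little care are verifying that the natural action satisfies the hypotheses of Lemma~\ref{lemk} (transitivity and, crucially, $\rho(\mathfrak{g})\subset L(N)$, i.e., $G$-invariance of $N$) and the dimension count in item~4, which is exactly what promotes the ``$k_i$-dimensional subspace'' of Lemma~\ref{lemk}(7) to the full dual space and thereby closes the forward reference made there.
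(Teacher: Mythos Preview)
Your proof is correct and follows essentially the same route as the paper's own argument: items~1 and~2 via Lemma~\ref{L5} applied to $\mathfrak{h}=\check{\mathfrak{g}}_i$ (together with Lemma~\ref{L56}(1) to identify $\pi(F)$ as a leaf of the $\check D_i$-foliation), item~3 via Lemma~\ref{lemk}(8) combined with item~2, and item~4 via Lemma~\ref{lemk}(7) plus the dimension count $k_i=\dim(\mathfrak{g}/\operatorname{Ad}_g\check{\mathfrak{g}}_i)$. Your write-up is simply more explicit than the paper's terse proof, in particular in verifying the hypotheses of Lemma~\ref{lemk} and spelling out the uniqueness clause in item~1 and the dimension argument in item~4.
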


\begin{proof}First and second items are consequences of Lemma \ref{L5} applied to the subalgebra \mbox{$\mathfrak{h}=\check {\mathfrak{g}}_i$}.
Item 3 follows from item 2 and Lemma~\ref{lemk}(8). Item~4 follows from Lemma~\ref{lemk}(7) since $k_i=\dim\mathfrak{g}_i-\dim\mathfrak{k}=\dim (\mathfrak{g}/\operatorname{Ad}_g\check{\mathfrak{g}}_i)^*$.
\end{proof}

\section[Algebraic criterion of kroneckerity in the case of a locally free action]{Algebraic criterion of kroneckerity in the case\\ of a locally free action}\label{s.kro}

The theorem below is the main result of this paper. Let $G$ be a compact Lie group, $K$ its closed subgroup. Assume that the natural action of $G$ on $M=T^*(G/K)$ is generically {\it locally free}, i.e., the stabilizer corresponding to the principal orbit type is finite. Fix such a stabilizer~$H$. In this case
the subset
\begin{gather*}
M_{H}=\big\{x\in M\colon G_x=gHg^{-1}\ \mathrm{for\ some}\ g\in G\big\}
\end{gather*}
of $M$, consisting of all orbits $G\cdot x$ in $M$ isomorphic to $G/H$, is an open and
dense subset of $M$ and the orbit space $M'_H:=M_{H}/G$ is a smooth manifold (cf.~Section~\ref{sec:Bi-Poisson0}). Write $p\colon M_H\to M_H/G$ for the canonical projection.

\begin{Theorem}\label{kro} Let $N$ be an $G$-invariant invertible\footnote{Invertibility can be always achieved by adding the identity operator, which does not change the corresponding pencil of operators and the related Poisson pencil.} semisimple Nijenhuis $(1,1)$-tensor on $G/K$ with the real spectrum $\{\lambda_1,\dots,\lambda_s\}$, i.e., $($cf.\ Theorem~{\rm \ref{T1})} there exists a decomposition
\begin{gather}\label{dec}
\mathfrak{g} = \mathfrak{g}_1 + \cdots+ \mathfrak{g}_s
\end{gather}
to the sum of subspaces such that
\begin{itemize}\itemsep=0pt
\item $ \forall_{i,j\in \{1,\dots,s\}, i\not=j }$ $\mathfrak{g}_i\cap\mathfrak{g}_j = \mathfrak{k}$;
\item $ \forall_{i,j\in \{1,\dots,s\} }$ $\mathfrak{g}_i+\mathfrak{g}_j $ are Lie subalgebras in $\mathfrak{g}$;
\item the decomposition above induces the decomposition $T(G/K)=D_1\oplus\cdots\oplus D_s$ to integrable subbundles and $N|_{D_i}=\lambda_i\operatorname{Id}_{D_i}$.
\end{itemize}
Let $(\Pi,\Pi_1)$ be the Poisson pair consisting of the canonical Poisson bivector $\Pi$ on $T^*(G/K)$ and of the Poisson bivector $\Pi_1=\widetilde{N}\circ\Pi$, where $\widetilde{N}$ is the cotangent lift of the $(1,1)$-tensor $N$ $($see Definition~{\rm \ref{ctgl}} and Lemma~{\rm \ref{constt})}.

 Then the bi-Poisson structure generated by the reduced Poisson pair $(p_*\Pi,p_*\Pi_1)$ is Kronecker at any point of the set~$p(W)$, where $W \subset M_H$ is the open dense set which will be specified in the proof, if and only if for any $i=1,\dots,s$
 \begin{gather}\label{eq67}
\exists\, a_i\in (\mathfrak{g}/\check{\mathfrak{g}}_i)^*\colon \ \operatorname{ind} \mathfrak{g}^{a_i}+ \operatorname{codim}_{(\mathfrak{g}/\check{\mathfrak{g}}_i)^*}\mathcal{O}_{a_i} = \operatorname{ind} \mathfrak{g},
 \end{gather}
where $\check{\mathfrak{g}}_i=\sum\limits_{j\not=i}\mathfrak{g}_i$ and $\mathfrak{g}^{a_i}$ and $\mathcal{O}_{a_i}$ are respectively the stabilizer algebra and the orbit of the element $a_i$ with respect to the coadjoint action $ \operatorname{ad}^*\colon \check{\mathfrak{g}}_i \to \mathfrak{gl} (( {\mathfrak{g}}/{\check{\mathfrak{g}}_i })^*)$.

Equivalently, condition \eqref{eq67} can be written as
 \begin{gather}\label{eq78}
 \operatorname{ind} (\check{\mathfrak{g}}_i\ltimes (\mathfrak{g}/\check{\mathfrak{g}}_i))=\operatorname{ind} \mathfrak{g},
 \end{gather}
 where the term in the l.h.s.\ is the semidirect product of the Lie algebra $\check{\mathfrak{g}}_i$ and the vector space~$(\mathfrak{g}/\check{\mathfrak{g}}_i)$ with respect to the $\operatorname{ad}$-action.
 \end{Theorem}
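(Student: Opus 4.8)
The plan is to deduce everything from Theorem~\ref{below} applied to $M=T^*(G/K)$ with the $G$-invariant bi-Poisson structure generated by $(\Pi,\Pi_1)$. First I would check hypotheses (a)--(e) of that theorem: (b) is our standing assumption; (a) and (e) hold because all data are real analytic and $G$ is compact, so the action extends to a holomorphic $\mathfrak{g}^\mathbb{C}$-action on a complexification carrying the holomorphic extension of the invariant pencil; (c) holds since $\mathfrak{g}$ is reductive, whence $\operatorname{codim}\operatorname{Sing}\mathfrak{g}^*\ge 3$. For (d) I would reparametrize the pencil as $\Pi^\lambda=\Pi_1-\lambda\Pi$ and combine Lemma~\ref{L56}(1) (only $\Pi^{\lambda_1},\dots,\Pi^{\lambda_s}$ are degenerate, their symplectic leaves having codimension $2k_i$) with Lemma~\ref{lemk}(2) (for $\lambda\neq\lambda_i$ the extended action is Hamiltonian w.r.t.\ $\Pi^\lambda$ with Poisson momentum map $\mu_\lambda=\mu_{\mathrm{can}}\circ\psi(\lambda)$ valued in $\mathfrak{g}^*$); thus the exceptional set is exactly the $s$ lines $\lambda=\lambda_i$. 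By Theorem~\ref{below}(2), the reduced pencil is Kronecker at $p(x)$, $x\in U:=M_H\setminus\bigcup_\lambda\mu_\lambda^{-1}(\operatorname{Sing}\mathfrak{g}^*)$, if and only if $\operatorname{corank}(p_*\Pi^{\lambda_i})|_{p(x)}=\operatorname{ind}\mathfrak{g}$ for every $i$. So the whole problem reduces to computing these coranks Lie-algebraically.

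Next I would perform this computation at a point $x$ lying in a \emph{generic} symplectic leaf $F$ of $\Pi^{\lambda_i}$. Since $\Pi^{\lambda_i}$ is $G$-invariant, $G$ permutes its symplectic leaves, so $G\cdot F$ is a union of such leaves, hence a Poisson submanifold of $(M,\Pi^{\lambda_i})$, and near $p(x)$ the image $(G\cdot F)/G$ is a Poisson submanifold of $(M/G,p_*\Pi^{\lambda_i})$, canonically identified as a reduced manifold with $F/G_F$, where $G_F$ is the stabilizer of the leaf and $\operatorname{Lie}(G_F)=\mathfrak{g}_F$. Here I use Lemma~\ref{stab}: $\pi(F)=P(g\check G_i)$, the stabilizer of $\pi(F)$ for the $G$-action on $G/K$ is $\operatorname{Ad}_g\check{\mathfrak{g}}_i$, and $\mathfrak{g}_F=\mathfrak{g}^{\varphi_F^i}$ is the stabilizer of $\varphi_F^i\in(\mathfrak{g}/\operatorname{Ad}_g\check{\mathfrak{g}}_i)^*\cong(\mathfrak{g}/\check{\mathfrak{g}}_i)^*$ under the coadjoint action of $\check{\mathfrak{g}}_i$; by Lemma~\ref{stab}(4) the functionals $\varphi_F^i$ for leaves over a fixed $F_0=\pi(F)$ exhaust $(\mathfrak{g}/\check{\mathfrak{g}}_i)^*$, so for generic $x$ the functional $\varphi_F^i$ is a generic element $a\in(\mathfrak{g}/\check{\mathfrak{g}}_i)^*$. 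To find $\operatorname{corank}(F/G_F)$ I would use the leaf momentum map $\mu^F_{\lambda_i}$ of Lemma~\ref{lemk}(3): the $\mathfrak{g}_F$-action on $F$ is locally free on $F\cap M_H$, hence $\mu^F_{\lambda_i}$ is submersive there, and the bifurcation lemma together with Lemma~\ref{lemM} give $\operatorname{corank}(F/G_F)|_{p(x)}=\operatorname{corank}(\Pi_{\mathfrak{g}_F^*})$ at a regular point, i.e.\ $\operatorname{ind}\mathfrak{g}_F$.

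Then it is a dimension count. Since $(G\cdot F)/G$ is a Poisson submanifold, $\operatorname{corank}(p_*\Pi^{\lambda_i})|_{p(x)}=\operatorname{codim}_{M/G}\big((G\cdot F)/G\big)+\operatorname{corank}(F/G_F)|_{p(x)}$. Using $\dim(M/G)=\dim M-\dim\mathfrak{g}$, $\dim\big((G\cdot F)/G\big)=\dim F-\dim\mathfrak{g}_F$ and $\dim F=\dim M-2k_i$, the first summand is $2k_i-\dim\mathfrak{g}+\dim\mathfrak{g}_F$, so $\operatorname{corank}(p_*\Pi^{\lambda_i})|_{p(x)}=2k_i-\dim\mathfrak{g}+\dim\mathfrak{g}_F+\operatorname{ind}\mathfrak{g}_F$. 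Now $\mathfrak{k}\subset\check{\mathfrak{g}}_i$ and the direct decomposition $\mathfrak{g}/\mathfrak{k}=\bigoplus_j(\mathfrak{g}_j/\mathfrak{k})$ give $\dim(\mathfrak{g}/\check{\mathfrak{g}}_i)=\dim(\mathfrak{g}_i/\mathfrak{k})=k_i$, and with $\mathfrak{g}_F=\mathfrak{g}^a$, $\dim\mathfrak{g}^a=\dim\check{\mathfrak{g}}_i-\dim\mathcal{O}_a=(\dim\mathfrak{g}-k_i)-\dim\mathcal{O}_a$, the expression collapses to $\operatorname{corank}(p_*\Pi^{\lambda_i})|_{p(x)}=\operatorname{ind}\mathfrak{g}^a+(k_i-\dim\mathcal{O}_a)=\operatorname{ind}\mathfrak{g}^a+\operatorname{codim}_{(\mathfrak{g}/\check{\mathfrak{g}}_i)^*}\mathcal{O}_a$, with $a$ generic. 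Taking $W\subset M_H$ to be the intersection of $U$ with, for each $i$, the open dense $G$-invariant sets where $x$ lies in a generic leaf of $\Pi^{\lambda_i}$ and $\mu^F_{\lambda_i}(x)$ is regular in $\mathfrak{g}_F^*$, I conclude: the reduced pencil is Kronecker at every point of $p(W)$ iff for each $i$ the generic value of $a\mapsto\operatorname{ind}\mathfrak{g}^a+\operatorname{codim}\mathcal{O}_a$ equals $\operatorname{ind}\mathfrak{g}$.

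Finally, to identify this with \eqref{eq67} and \eqref{eq78} I would invoke Ra\"{\i}s's formula for the index of a semidirect product: the generic value of $\operatorname{ind}\mathfrak{g}^a+\operatorname{codim}_{(\mathfrak{g}/\check{\mathfrak{g}}_i)^*}\mathcal{O}_a$ over $a\in(\mathfrak{g}/\check{\mathfrak{g}}_i)^*$ is exactly $\operatorname{ind}\big(\check{\mathfrak{g}}_i\ltimes(\mathfrak{g}/\check{\mathfrak{g}}_i)\big)$, and this is also its minimum; since $\check{\mathfrak{g}}_i\ltimes(\mathfrak{g}/\check{\mathfrak{g}}_i)$ is a contraction of $\mathfrak{g}$ one has $\operatorname{ind}\big(\check{\mathfrak{g}}_i\ltimes(\mathfrak{g}/\check{\mathfrak{g}}_i)\big)\ge\operatorname{ind}\mathfrak{g}$ (the index does not decrease under contraction, by lower semicontinuity of the rank in the structure constants), so ``the generic value equals $\operatorname{ind}\mathfrak{g}$'', ``some $a_i$ realizes the value $\operatorname{ind}\mathfrak{g}$'' (\eqref{eq67}), and $\operatorname{ind}\big(\check{\mathfrak{g}}_i\ltimes(\mathfrak{g}/\check{\mathfrak{g}}_i)\big)=\operatorname{ind}\mathfrak{g}$ (\eqref{eq78}) are all equivalent. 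The step I expect to be the main obstacle is the corank computation for the exceptional reduced structures: correctly organizing the reduction of $(M,\Pi^{\lambda_i})$ near $x\in F$ as the iterated reduction ``restrict to the Poisson submanifold $G\cdot F$, then quotient by $G$, obtaining $F/G_F$ as a Poisson submanifold of $M/G$'', getting the dimension bookkeeping right, and pinning down exactly the genericity (hence the set $W$) needed to make the momentum-map arguments valid both on $M$ and on the leaves $F$.
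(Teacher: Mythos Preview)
Your proposal is correct and follows essentially the same route as the paper: verify the hypotheses of Theorem~\ref{below}, reduce kroneckerity to the corank condition $\operatorname{corank}(p_*\Pi^{\lambda_i})|_{p(x)}=\operatorname{ind}\mathfrak{g}$, compute this corank via the iterated reduction through the leaf $F$ and its stabilizer $\mathfrak{g}_F=\mathfrak{g}^{\varphi_F^i}$ using Lemmas~\ref{lemk}, \ref{stab} and the bifurcation lemma, and finish with the Ra\"{\i}s formula (Lemma~\ref{lrais}). The only cosmetic difference is in the bookkeeping of the codimension term: the paper passes through the leaf space $\mathcal{S}_i$ and directly identifies $\operatorname{codim}_{\mathcal{S}_i}G\cdot F$ with $\operatorname{codim}_{(\mathfrak{g}/\operatorname{Ad}_g\check{\mathfrak{g}}_i)^*}\mathcal{O}_{\varphi^i_F}$ via Lemma~\ref{stab}(4), whereas you compute $\operatorname{codim}_{M/G}\big((G\cdot F)/G\big)=2k_i-\dim\mathfrak{g}+\dim\mathfrak{g}_F$ by a raw dimension count and then substitute $\dim\mathfrak{g}^a=\dim\check{\mathfrak{g}}_i-\dim\mathcal{O}_a$; both yield the same formula~(\ref{equiv}).
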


\begin{proof} We first note that conditions (\ref{eq67}) and~(\ref{eq78}) are equivalent by Lemma~\ref{lrais} below.

Let $U=M_H\setminus\big(\bigcup_\lambda\mu_\lambda^{-1}(\operatorname{Sing}\mathfrak{g}^*)\big)$, where the moment map~$\mu_\lambda$ is specified in Lemma~\ref{lemk}(2).

Observe that all the objects involved admit a natural complexification (cf.\ Remark~\ref{rem}): the compact Lie groups $G$ and $K$ are imbedded in their Chevalley complexifications~$G^c$ and~$K^c$ and the homogeneous space $Q=G/K$ is imbedded into the complex homogeneous space \mbox{$Q^c=G^c/K^c$}. Moreover, the decomposition (\ref{dec}) implies the decomposition $\mathfrak{g}^\mathbb{C}=\mathfrak{g}_1^\mathbb{C}+\cdots+\mathfrak{g}_s^\mathbb{C}$, which in turn induces the decomposition $TQ^c=D_1^c\oplus\cdots\oplus D_s^c$ of the holomorphic tangent bundle to $Q^c$ to complex analytic involutive distributions and a complex analytic $(1,1)$-tensor~$N^c$ given by $N|_{D_i^c}=\lambda_i\operatorname{Id}_{D_i^c}$. By Lemma~\ref{lemk}(2) the assumptions of Theorem~\ref{below} are satisfied (it is well-known that for reductive Lie algebras $\operatorname{codim}\operatorname{Sing}\mathfrak{g}^*\ge 3$) and we conclude that the reduced bi-Poisson structure $\big\{\big(\Pi^\lambda\big)'\big\}$, $\big(\Pi^\lambda\big)'=p_*\Pi_1-\lambda p_*\Pi$, is Kronecker at a point $x'\in p(U)$ if and only if $\operatorname{corank} p_*\Pi^{\lambda_i}|_{x'}=\operatorname{ind} \mathfrak{g}$, $i=1,\dots,s$, where $\Pi^{\lambda_i}=\Pi_1-\lambda_i\Pi$ (see Theorem~\ref{below}(2)). Below we express the number $\operatorname{corank} p_*\Pi^{\lambda_i}|_{x'}$ in equivalent terms, see formula~(\ref{equiv}).

From Lemma \ref{lemk}(3) it follows that the restriction of the action $\tilde\rho\colon \mathfrak{g} \to \Gamma(T(T^*G/K))$ to the stabiliser subalgebra $\mathfrak{g}_{F} $ of any symplectic leaf $F$ of the Poisson bivector $\Pi^{\lambda_i}$ is Hamiltonian with respect to this bivector with the momentum map $\mu_{\lambda_i}^F\colon F\to \mathfrak{g}^*$. Obviously the action of $\mathfrak{g}_{F} $ is also locally free. Therefore by the bifurcation lemma (cf.\ the proof of Theorem~\ref{below}) the corank of the reduction $\big(\Pi^{\lambda_i}|F\big)'$ of the Poisson structure restricted to the symplectic leaf, $\Pi^{\lambda_i}|F$, at the point $x'=p(x)$, where $x\in F$, is equal to the index of the Lie algebra of $\mathfrak{g}_F$, provided $\mu_{\lambda_i}^F(x)\not\in\operatorname{Sing} \mathfrak{g}_F$. The algebraic set $\operatorname{Sing} \mathfrak{g}_F$ is nowhere dense in $\mathfrak{g}_F^*$ and the set $U_F=(F\cap U)\setminus\big(\bigcup_{i=1}^s\big(\mu^F_{\lambda_i}\big)^{-1}\big)(\operatorname{Sing} \mathfrak{g}_F)$ is an open dense set in~$F$ and, moreover, $V=\bigcup_FU_F$ is open and dense in~$M_H$. From now on we will consider only points $x'\in p(V)$.

Obviously, $\operatorname{corank}_{(M_H/G)}\big(\Pi^{\lambda_i}\big)_{x'}'= \operatorname{corank}_{F/G_F}\big(\Pi^{\lambda_i}|F\big)_{x'}'
+\operatorname{codim}_{\mathcal{S}_i}G \cdot F$.
Here $G_F$ is the subgroup in $G$ corresponding to the subalgebra $\mathfrak{g}_F$, $\mathcal{S}_i$ stands for the space of symplectic leaves of the Poisson bivector $\Pi^{\lambda_i}$, on which a natural action of the group $G$ is induced from the action of $G$ on $M$ due to the $G$-invariance of $\Pi^{\lambda_i}$, and $G \cdot F$ denotes the orbit of the point $F\in\mathcal{S}_i$ with respect to this action. Recall (see Lemma~\ref{L56}(1)) that the space $\mathcal{S}_i$ is foliated by the submanifolds of the form $\pi^{-1}(F_0)$, where $F_0 \subset G/K$ is a leaf of the foliation tangent to the distribution $\check D_i=\sum\limits_{j\not=i}D_j$.
Since the group $G$ acts transitively on $G/K$ and as a consequence on the space of leaves of the foliation tangent to the distribution~$\check D_i$, we have $\operatorname{codim}_{\mathcal{S}_i}G \cdot F=\operatorname{codim}_{\mathcal{S}_i|\pi^{-1}(\pi(F))}G_{\pi(F)} \cdot F$, where $G_{\pi(F)}$ is the subgroup corresponding to the subalgebra~$\mathfrak{g}_{\pi(F)}$, i.e., the stabilizer of the submanifold $\pi^{-1}(\pi(F))$ with respect to the cotangent action (see Lemma~\ref{lemk}(5)) and $\mathcal{S}_i|\pi^{-1}(\pi(F))$ stands for the submanifold in~$\mathcal{S}_i$ of leaves contained in the Poisson submanifold $\pi^{-1}(\pi(F))$. In view of Lemma~\ref{lemk}(7), Lemma \ref{stab}(4) and Lemma~\ref{lemk}(8) $\mathcal{S}_i|\pi^{-1}(\pi(F))$ can be identified with $(\mathfrak{g}/\mathfrak{g}_{\pi(F)})^*$, $G_{\pi(F)} \cdot F$ with $\mathcal{O}_{\varphi^i_F}$ and $\mathfrak{g}_F$ with $\mathfrak{g}^{\varphi^i_F}$, where $\varphi^i_F\in (\mathfrak{g}/\mathfrak{g}_{\pi(F)})^*$ is the functional corresponding to $F$ and $\mathfrak{g}^{\varphi^i_F}$ and $\mathcal{O}_{\varphi^i_F}$ are respectively its stabilizer and orbit with respect to the action of $\mathfrak{g}_{\pi(F)}$ on $(\mathfrak{g}/\mathfrak{g}_{\pi(F)})^*$.

Thus we have proven that $\operatorname{corank}_{(M_H/G)}\big(\Pi^{\lambda_i}\big)_{x'}'
=\operatorname{corank}_{F/G_F}\big(\Pi^{\lambda_i}|F\big)_{x'}'
+\operatorname{codim}_{\mathcal{S}_i}G \cdot F=\operatorname{ind} \mathfrak{g}_F
+\operatorname{codim}_{\mathcal{S}_i|\pi^{-1}(\pi(F))}G_{\pi(F)} \cdot F=\operatorname{ind} \mathfrak{g}^{\varphi^i_F}+\operatorname{codim}_{(\mathfrak{g}/\mathfrak{g}_{\pi(F)})^*}\mathcal{O}_{\varphi^i_F}$.
Finally, in view of Lem\-ma~\ref{stab}(2), we have $\mathfrak{g}_{\pi(F)}=\operatorname{Ad}_g\check {\mathfrak{g}}_i$ for some $g\in G$ and \begin{gather}\label{equiv}
 \operatorname{corank}\big(\Pi^{\lambda_i}\big)_{x'}'=
\operatorname{ind} \mathfrak{g}^{\varphi^i_F}+\operatorname{codim}_{(\mathfrak{g}/\operatorname{Ad}_g\check{\mathfrak{g}}_i)^*}\mathcal{O}_{\varphi^i_F}.
\end{gather}

We are ready to finish the proof. Assume that $\big\{\big(\Pi^\lambda\big)'\big\}$ is Kronecker at $x'$. Then by Theo\-rem~\ref{below} $\operatorname{ind} \mathfrak{g}_{\varphi^i_F}+\operatorname{codim}_{(\mathfrak{g}/\operatorname{Ad}_g\check{\mathfrak{g}}_i)^*}\mathcal{O}_{\varphi^i_F}=\operatorname{ind} \mathfrak{g}$, $i\in \{1,\dots,s\}$. Acting by $\operatorname{Ad}_{g^{-1}}$ we will get condition~(\ref{eq67}).

Vice versa, assume that (\ref{eq78}) is satisfied. Then by the Ra\"{i}s formula (see Lemma~\ref{lrais}) $\operatorname{ind} \mathfrak{g}= \operatorname{ind} \mathfrak{g}^{a_i}+ \operatorname{codim}_{(\mathfrak{g}/\check{\mathfrak{g}}_i)^*}\mathcal{O}_{a_i}$ for $a_i\in R((\mathfrak{g}/\check{\mathfrak{g}}_i)^*)$. Obviously also $\operatorname{ind} \mathfrak{g}= \operatorname{ind} \mathfrak{g}^{g\cdot a_i}+ \operatorname{codim}_{(\mathfrak{g}/\operatorname{Ad}_g\check{\mathfrak{g}}_i)^*}\mathcal{O}_{g\cdot a_i}$ and ${g\cdot a_i}\in R((\mathfrak{g}/\operatorname{Ad}_g\check{\mathfrak{g}}_i)^*)$ for any $g\in G$, where $g\cdot a_i:=\operatorname{Ad}^*_{g^{-1}}a_i$. Fix $g\in G$ and
let $F_i$ be the symplectic leaf of the Poisson bivector $\Pi^{\lambda_i}$ corresponding to the element ${g\cdot a_i}$ by Lemma~\ref{stab}(4) (with $\pi(F_i)=P\big(\operatorname{Ad}_g\check G_i\big)$). Note that the leaves $F_i$ are mutually transversal and $\sum_i\operatorname{codim} F_i=\dim M$, thus $\bigcap_i F_i$ is a point, say $x$.

Recall (see Lemma~\ref{lemk}(6),~(7) and Lemma~\ref{stab}(4)) that the map
\begin{gather*} \nu_i^g:=\nu_i|_{\pi^{-1}(\pi(F_i))}\colon \ \pi^{-1}(\pi(F_i))\to \mathfrak{g}_{\pi(F_i)}^\bot\cong (\mathfrak{g}/\operatorname{Ad}_g\check{\mathfrak{g}}_i)^*
\end{gather*} is an epimorphism. The set $\big(\nu_i^g\big)^{-1}(R((\mathfrak{g}/\operatorname{Ad}_g\check{\mathfrak{g}}_i)^*)$ is an open dense set in $\pi^{-1}(\pi(F_i))$ and the set $W=V\cap \big(\bigcup_{g\in G}\bigcap_{i=1}^s\big(\nu_i^g\big)^{-1}\big)(R((\mathfrak{g}/\operatorname{Ad}_g\check{\mathfrak{g}}_i)^*))$ is an open dense set in $M_H$.

Taking $a_i$ such that $g\cdot a_i\in \nu^g_i\big(W\cap \pi^{-1}(\pi(F_i))\big)$ for any $i$, we achieve that $x\in W \subset V$.
Formula~(\ref{equiv}) shows that $\operatorname{ind} \mathfrak{g}^{g\cdot a_i}+ \operatorname{codim}_{(\mathfrak{g}/\operatorname{Ad}_g\check{\mathfrak{g}}_i)^*}\mathcal{O}_{g\cdot a_i}=\operatorname{corank} p_*\Pi^{\lambda_i}|_{x'}$, where $x'=p(x)$. By Theorem~\ref{below} we conclude that $\{(\Pi^\lambda)'\}$ is Kronecker at~$x'$.
\end{proof}

\begin{Lemma}\label{lrais} Let $\mathfrak{g}$ be a Lie algebra and $\mathfrak{h} \subset\mathfrak{g}$ its Lie subalgebra. Then the condition of existing $a\in (\mathfrak{g}/\mathfrak{h})^*$ such that
\begin{gather}\label{rai1}
 \operatorname{ind} \mathfrak{h}^{a}+ \operatorname{codim}_{(\mathfrak{g}/\mathfrak{h})^*}\mathcal{O}_{a} = \operatorname{ind} \mathfrak{g},
 \end{gather}
where $\mathfrak{h}^{a}$ and $\mathcal{O}_{a}$ are respectively the stabilizer algebra and the orbit of the element $a$ with respect to the coadjoint action $ \operatorname{ad}^*\colon \mathfrak{h} \to \mathfrak{gl} (( {\mathfrak{g}}/{\mathfrak{h} })^*)$, is equivalent to the following one:
 \begin{gather}\label{rai2}
 \operatorname{ind} (\mathfrak{h}\ltimes (\mathfrak{g}/\mathfrak{h}))=\operatorname{ind} \mathfrak{g},
 \end{gather}
 where the Lie algebra in the l.h.s.\ is the semidirect product of the Lie algebra $\mathfrak{h}$ and the vector space~$(\mathfrak{g}/\mathfrak{h})$ with respect to the $\operatorname{ad}$-action.
 Moreover, if one of this condition holds, the equality~\eqref{rai1} holds for any $a$ from the open dense set $R((\mathfrak{g}/\mathfrak{h})^*) \subset (\mathfrak{g}/\mathfrak{h})^*$, $R((\mathfrak{g}/\mathfrak{h})^*):=\{\alpha\in (\mathfrak{g}/\mathfrak{h})^*\,|\, \exists\, \beta\in (\mathfrak{h}\ltimes (\mathfrak{g}/\mathfrak{h}))^*\setminus \operatorname{Sing} ((\mathfrak{h}\ltimes (\mathfrak{g}/\mathfrak{h}))^*)\colon \alpha= \beta|_{(\mathfrak{g}/\mathfrak{h})}\}$.
 \end{Lemma}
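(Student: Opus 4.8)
The backbone of the argument is the Ra\"{i}s formula for the index of a semidirect product with an abelian ideal: for a Lie algebra $\mathfrak{h}$ and an $\mathfrak{h}$-module $W$, regarded as abelian ideal of $\mathfrak{q}:=\mathfrak{h}\ltimes W$, there is an open dense subset $U\subset W^*$ with
\[
\operatorname{ind}\mathfrak{q}=\operatorname{ind}\mathfrak{h}^{\lambda}+\operatorname{codim}_{W^*}\mathcal{O}_{\lambda}\qquad\text{for all }\lambda\in U,
\]
and this common value is the \emph{minimum} of $\operatorname{ind}\mathfrak{h}^{\lambda}+\operatorname{codim}_{W^*}\mathcal{O}_{\lambda}$ over all $\lambda\in W^*$ (the inequality ``$\le$'' being the elementary half, obtained by evaluating the Kirillov form of $\mathfrak{q}$ at a covector $(\eta,\lambda)\in\mathfrak{h}^*\oplus W^*$ with $\eta$ regular for $\mathfrak{h}^{\lambda}$). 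The plan is to invoke this with $W=\mathfrak{g}/\mathfrak{h}$ carrying the $\mathfrak{h}$-action induced by $\operatorname{ad}$. Then $W^*=(\mathfrak{g}/\mathfrak{h})^*$ is identified with $\mathfrak{h}^{\perp}\subset\mathfrak{g}^*$, the dual action being the restriction to $\mathfrak{h}^{\perp}$ of the coadjoint action of $\mathfrak{h}$, so that the $\mathfrak{h}^{a}$ and $\mathcal{O}_{a}$ of the lemma coincide with the objects in the formula; one checks directly that the generic locus $U$ is exactly the set $R\big((\mathfrak{g}/\mathfrak{h})^*\big)$ of the statement (the image under restriction to $\mathfrak{g}/\mathfrak{h}$ of the regular locus of $\big(\mathfrak{h}\ltimes(\mathfrak{g}/\mathfrak{h})\big)^*$).

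Granting this, the implication $(\ref{rai2})\Rightarrow(\ref{rai1})$ together with the ``moreover'' clause is immediate: if $\operatorname{ind}\big(\mathfrak{h}\ltimes(\mathfrak{g}/\mathfrak{h})\big)=\operatorname{ind}\mathfrak{g}$, then the displayed formula gives $\operatorname{ind}\mathfrak{h}^{a}+\operatorname{codim}_{(\mathfrak{g}/\mathfrak{h})^*}\mathcal{O}_{a}=\operatorname{ind}\mathfrak{g}$ for every $a\in R\big((\mathfrak{g}/\mathfrak{h})^*\big)$, which is $(\ref{rai1})$ for all such $a$, in particular for some $a$.

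For the converse, I would assume some $a_{0}$ satisfies $\operatorname{ind}\mathfrak{h}^{a_{0}}+\operatorname{codim}_{(\mathfrak{g}/\mathfrak{h})^*}\mathcal{O}_{a_{0}}=\operatorname{ind}\mathfrak{g}$. By the minimum form of the Ra\"{i}s formula this gives $\operatorname{ind}\big(\mathfrak{h}\ltimes(\mathfrak{g}/\mathfrak{h})\big)\le\operatorname{ind}\mathfrak{g}$. The opposite inequality follows from the standard fact that the index never decreases under a Lie algebra contraction: rescaling a vector space complement $\mathfrak{m}$ of $\mathfrak{h}$ in $\mathfrak{g}$ by $\phi_{t}|_{\mathfrak{h}}=\operatorname{Id}$, $\phi_{t}|_{\mathfrak{m}}=t\operatorname{Id}$ produces a family $\mathfrak{g}_{t}$ of Lie algebras with $\mathfrak{g}_{t}\cong\mathfrak{g}$ for $t\neq0$ and $\mathfrak{g}_{0}=\mathfrak{h}\ltimes(\mathfrak{g}/\mathfrak{h})$, and lower semicontinuity of the generic rank of the Lie--Poisson form along $\mathfrak{g}_{t}$ (a maximal nonvanishing minor at a regular covector of $\mathfrak{g}_{0}$ persists for $t$ near $0$) yields $\operatorname{ind}\mathfrak{g}_{0}\ge\operatorname{ind}\mathfrak{g}_{t}=\operatorname{ind}\mathfrak{g}$. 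Hence $\operatorname{ind}\big(\mathfrak{h}\ltimes(\mathfrak{g}/\mathfrak{h})\big)=\operatorname{ind}\mathfrak{g}$, i.e.\ $(\ref{rai2})$.

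I expect the only genuinely nontrivial input to be the Ra\"{i}s formula itself, which I would cite as a black box rather than reprove (its proof is the computation of the rank of the Kirillov form of $\mathfrak{q}$ at $(\eta,\lambda)\in\mathfrak{h}^*\oplus W^*$ in terms of $\dim(\mathfrak{h}\cdot\lambda)$ and the Kirillov form of $\mathfrak{h}^{\lambda}$). Everything else is bookkeeping: the orbit identity $\dim\mathcal{O}_{a}=\dim\mathfrak{h}-\dim\mathfrak{h}^{a}$, the coadjoint realisation of the module $\mathfrak{g}/\mathfrak{h}$, and the semicontinuity argument for the contraction bound. The point to handle with care is the interplay of the two estimates bounding $\operatorname{ind}\big(\mathfrak{h}\ltimes(\mathfrak{g}/\mathfrak{h})\big)$ against $\operatorname{ind}\mathfrak{g}$ from above and below: it is exactly what upgrades the existence of a single good $a$ in $(\ref{rai1})$ to the equality $(\ref{rai2})$, and it relies on knowing that the Ra\"{i}s value is the \emph{minimum} of $\operatorname{ind}\mathfrak{h}^{a}+\operatorname{codim}\mathcal{O}_{a}$, not merely its generic value, together with the contraction inequality; identifying $R\big((\mathfrak{g}/\mathfrak{h})^*\big)$ with the generic locus of the formula also needs a brief verification.
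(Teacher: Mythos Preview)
Your argument is correct and follows essentially the same route as the paper's proof: both invoke the Ra\"{i}s formula to get $(\ref{rai2})\Rightarrow(\ref{rai1})$ on the generic set $R((\mathfrak{g}/\mathfrak{h})^*)$, and for the converse both combine the inequality $\operatorname{ind}\mathfrak{h}^{a}+\operatorname{codim}\mathcal{O}_{a}\ge\operatorname{ind}(\mathfrak{h}\ltimes(\mathfrak{g}/\mathfrak{h}))$ (your ``minimum form'', the paper's corank computation at a covector $\beta$) with the contraction bound $\operatorname{ind}(\mathfrak{h}\ltimes(\mathfrak{g}/\mathfrak{h}))\ge\operatorname{ind}\mathfrak{g}$. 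Your write-up is slightly more explicit about the contraction argument, but the structure is identical.
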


\begin{proof} Recall \cite[Proposition~1.3(i)]{rais} that
\begin{gather*}
\operatorname{ind} (\mathfrak{h}\ltimes (\mathfrak{g}/\mathfrak{h}))= \operatorname{ind} \mathfrak{h}^{a}+ \operatorname{codim}_{(\mathfrak{g}/\mathfrak{h})^*}\mathcal{O}_{a}
\end{gather*}
for $a\in R((\mathfrak{g}/\mathfrak{h})^*)$. Hence (\ref{rai2}) implies (\ref{rai1}). On the other hand, for arbitrary $a$ we have $ \operatorname{ind} \mathfrak{h}^{a}+ \operatorname{codim}_{(\mathfrak{g}/\mathfrak{h})^*}\mathcal{O}_{a}=\operatorname{corank} \Pi_{(\mathfrak{h}\ltimes (\mathfrak{g}/\mathfrak{h}))^*}|_\beta\ge \operatorname{ind} (\mathfrak{h}\ltimes (\mathfrak{g}/\mathfrak{h}))$, where $\beta\in \operatorname{Sing} (\mathfrak{h}\ltimes (\mathfrak{g}/\mathfrak{h}))^*$ is any element with $a= \beta|_{(\mathfrak{g}/\mathfrak{h})}$ (cf.~\cite[Theorem~1.1]{pRais}) and, moreover, the number $\operatorname{ind} (\mathfrak{h}\ltimes (\mathfrak{g}/\mathfrak{h}))$ is bounded below by $\operatorname{ind} \mathfrak{g}$ (since $\mathfrak{h}\ltimes (\mathfrak{g}/\mathfrak{h})$ is a contraction of $\mathfrak{g}$). Thus, if $\operatorname{ind} \mathfrak{h}^{a}+ \operatorname{codim}_{(\mathfrak{g}/\mathfrak{h})^*}\mathcal{O}_{a}=\operatorname{ind} \mathfrak{g}$ for some $a$, then $\operatorname{ind} (\mathfrak{h}\ltimes (\mathfrak{g}/\mathfrak{h}))= \operatorname{ind} \mathfrak{g}$.
\end{proof}

\begin{Remark}\label{remarr} In the case when $K=\{e\}$ is the trivial subgroup of the Lie group $G$, condition~(\ref{eq67}) coincides with the necessary and sufficient condition of kroneckerity of the Lie--Poisson pencil related to an algebraic Nijenhuis operator obtained in \cite[Theorem~2.5]{p5}.
\end{Remark}

\begin{Theorem}\label{thgeod} Retain the assumptions of Theorem~{\rm \ref{kro}} and assume that one of the equivalent conditions \eqref{eq67}, \eqref{eq78} hold. Let $b$ be the $G$-invariant metric on $G/K$, called {\em normal}, induced by some biinvariant metric on~$G$, i.e., by an $\operatorname{Ad} G$-invariant bilinear form $B$ on $\mathfrak{g}$. Then the $G$-invariant metric $b_N$, $b_N(x,y):=b((N+N^*)x,y)$, $x,y\in \Gamma(T(G/K))$, on $G/K$ corresponding to the symmetric $(1,1)$-tensor $N+N^*$, where $N^*$ is the adjoint to $N$ $(1,1)$-tensor, $b(N^*x,y)=b(x,Ny)$, as well as the normal metric itself have completely integrable geodesic flows in the class of analytic integrals polynomial in momenta.
\end{Theorem}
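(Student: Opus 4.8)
I will derive both integrability statements from Theorem~\ref{kro} by exhibiting, for each of the two metrics, its geodesic Hamiltonian inside a complete set of analytic functions in involution on $T^*(G/K)$ with respect to the canonical bivector $\Pi$. Under the standing hypotheses, Theorem~\ref{kro} gives that the reduced bi-Poisson structure $\{(\Pi^\lambda)'\}$, $(\Pi^\lambda)'=p_*\Pi_1-\lambda\, p_*\Pi$, on $M_H'=M_H/G$ is Kronecker on the open dense set $p(W)$; since $\mathfrak{g}$ is reductive the rank hypothesis of Theorem~\ref{prop} holds on an open dense set, so $Z^{\{(\Pi^\lambda)'\}}$ is a complete involutive set with respect to every $p_*\Pi^\lambda$, in particular with respect to $p_*\Pi$. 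First I pull this set back along the Poisson submersion $p$: the set $\mathcal{B}:=p^*\big(Z^{\{(\Pi^\lambda)'\}}\big)$ is then involutive with respect to $\Pi$. Next, taking a complete involutive set $\mathcal{F}$ of polynomials on $(\mathfrak{g}^*,\Pi_{\mathfrak{g}^*})$ (Sadetov) and $\mathcal{A}:=\mu_{\rm can}^*\mathcal{F}$, Theorem~\ref{below}(3), applied with $t_0$ the non-exceptional direction of the canonical bivector, shows that $\mathcal{I}:=\mathcal{A}\cup\mathcal{B}$ is complete with respect to $\Pi$; it is also involutive with respect to $\Pi$, because $\mathcal{B}$ and $\mathcal{A}$ are and $\{\mathcal{A},\mathcal{B}\}^\Pi=0$ — indeed $\mathcal{B}$ consists of $G$-invariant functions while every function in $\mathcal{A}$ has the form $\mu_{\rm can}^*f$, and these two classes always Poisson commute with respect to $\Pi$ (the Hamiltonian field of $\mu_{\rm can}^*f$ is, at each point, a fundamental vector field of the $G$-action). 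Finally, by Theorem~\ref{below}(4), $\mathcal{B}=\operatorname{Span}\big(\bigcup_\lambda\mu_\lambda^*(Z^{\Pi_{\mathfrak{g}^*}})\big)$; as the Casimirs of a reductive $\mathfrak{g}^*$ are polynomial and $\mu_\lambda=\mu_{\rm can}\circ\psi(\lambda)$ is fibrewise linear, every function of $\mathcal{I}$ is analytic and polynomial in momenta, hence extends from $M_H$ to all of $T^*(G/K)$.

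\textbf{The normal metric.} In the left trivialization $T^*(G/K)\cong G\times_K\mathfrak{k}^\perp$, with $\mathfrak{k}^\perp\subset\mathfrak{g}^*$ the annihilator of $\mathfrak{k}$, the normal geodesic Hamiltonian is $q([g,X])=\tfrac12 B^*(X,X)$ and $\mu_{\rm can}([g,X])=\operatorname{Ad}^*_gX$, so $\operatorname{Ad}$-invariance of $B$ yields $q=\mu_{\rm can}^*Q$, where $Q\in\mathcal{E}(\mathfrak{g}^*)$ is the quadratic Casimir dual to $B$. Since $Q$ is $\operatorname{Ad}^*$-invariant, $X^\Pi_{\mu_{\rm can}^*Q}$ is everywhere a fundamental vector field of the $G$-action, hence $q$ descends to a Casimir of $p_*\Pi$; therefore $q\in p^*(Z^{p_*\Pi})\subset\mathcal{B}\subset\mathcal{I}$, so its geodesic flow is Liouville integrable in the required class.

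\textbf{The metric $b_N$.} Its Hamiltonian $q_N$ is $G$-invariant, so $\{q_N,\mathcal{A}\}^\Pi=0$ by the remark above, and it suffices to prove $\{q_N,\mathcal{B}\}^\Pi=0$; then $\mathcal{I}\cup\{q_N\}$ is a complete involutive set with respect to $\Pi$ and we are done. By linearity it is enough to treat the generators $\mu_\lambda^*C$, $C\in Z^{\Pi_{\mathfrak{g}^*}}$. Because $q_N$ is $G$-invariant, $\mu_\lambda$ is the momentum map of $\tilde\rho$ for $\Pi^\lambda$ (Lemma~\ref{lemk}(2)) and $C$ is $\operatorname{Ad}^*$-invariant, one gets at once $\{q_N,\mu_\lambda^*C\}^{\Pi^\lambda}=0$, equivalently $\{q_N,\mu_\lambda^*C\}^{\Pi_1}=\lambda\{q_N,\mu_\lambda^*C\}^{\Pi}$, for every non-exceptional $\lambda$. \emph{The essential, and I expect hardest, step is to upgrade this to $\{q_N,\mu_\lambda^*C\}^\Pi=0$ uniformly in $\lambda$.} The plan here is to expand $\mu_\lambda=\sum_i\nu_i/(\lambda_i-\lambda)$ using Lemma~\ref{lemk}(2),(4) and examine the partial-fraction expansion in $\lambda$ of $\{q_N,\mu_\lambda^*C\}^\Pi=\langle (dC)_{\mu_\lambda},\{q_N,\mu_\lambda\}^\Pi\rangle$: since the differentials of Casimirs at a regular $a\in\mathfrak{g}^*$ span the stabilizer $\mathfrak{g}^a$, the required vanishing for all $C$ becomes the condition that $\{q_N,\mu_\lambda\}^\Pi(x)\in\operatorname{ad}^*_{\mathfrak{g}}\mu_\lambda(x)$ for all non-exceptional $\lambda$ and all $x\in p^{-1}(p(W))$ — a Lie-algebraic condition on the operator $N+N^*$ (equivalently on the component momentum maps $\nu_i$ of Lemma~\ref{lemk}(4)), which is to be extracted from the fact that $b_N$ is built from the very tensor $N$ generating the pencil, together with $\operatorname{Ad}G$-invariance of $B$ and $\rho_i(\mathfrak{g})\subset L(N)$ (Lemma~\ref{L56}(2)). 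Granting this, $\mathcal{I}\cup\{q_N\}$ is a complete involutive set of analytic functions polynomial in momenta, so the $b_N$-geodesic flow is Liouville integrable in the required class.
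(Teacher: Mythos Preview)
Your setup (Kronecker reduction via Theorem~\ref{kro}, completeness via Theorem~\ref{below}(3), identification $\mathcal{B}=\operatorname{Span}\big(\bigcup_\lambda\mu_\lambda^*Z^{\Pi_{\mathfrak{g}^*}}\big)$ via Theorem~\ref{below}(4), and polynomiality in momenta) is correct and matches the paper. So does your treatment of the normal metric: $q=\mu_{\rm can}^*Q$ with $Q$ the quadratic Casimir, hence $q\in\mathcal{B}\subset\mathcal{I}$.

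For $b_N$, however, you leave a genuine gap. You reduce to proving $\{q_N,\mu_\lambda^*C\}^\Pi=0$ for all Casimirs $C$ and non-exceptional $\lambda$, observe correctly that $\{q_N,\mu_\lambda^*C\}^{\Pi^\lambda}=0$, and then propose to ``upgrade'' this via a partial-fraction analysis and an unproved Lie-algebraic condition on $N+N^*$; you yourself flag this as the hardest step and proceed only by ``granting'' it. That is exactly where the argument is incomplete, and the route you sketch is both vague and unnecessary.

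The paper avoids this difficulty entirely by showing directly that $q_N\in\mathcal{I}$, not merely that $q_N$ commutes with $\mathcal{I}$. The key is formula~(\ref{mom}): $\mu_\lambda=\mu_{\rm can}\circ\psi(\lambda)$ with $\psi(\lambda)=\big((N-\lambda I)^t\big)^{-1}$. Applying this to the quadratic Casimir $Q$ gives
\[
\mu_\lambda^*Q(x)=b\big((N-\lambda I)^{-1}\big((N-\lambda I)^{-1}\big)^*x,\,x\big),\qquad x\in T^*(G/K),
\]
and the Laurent expansion at $\lambda=\infty$, coming from $(N-\lambda I)^{-1}=-\lambda^{-1}I-\lambda^{-2}N-\cdots$, reads
\[
\mu_\lambda^*Q=\frac{1}{\lambda^{2}}\,q+\frac{1}{\lambda^{3}}\,b\big((N+N^*)\cdot,\cdot\big)+\cdots=\frac{1}{\lambda^{2}}\,q+\frac{1}{\lambda^{3}}\,q_N+\cdots.
\]
Since $\mathcal{B}$ is the \emph{span} of $\{\mu_\lambda^*C\}_{\lambda,C}$, the Laurent coefficients of $\lambda\mapsto\mu_\lambda^*Q$ lie in $\mathcal{B}$; in particular $q_N\in\mathcal{B}\subset\mathcal{I}$. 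This single observation replaces your entire ``hardest step'' and closes the proof immediately.
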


\begin{proof} It is well-known that a function of the form $\mu_{\rm can}^*f$, where $\mu_{\rm can}$ is the moment map of the $G$-action on $T^*(G/K)$ corresponding to the canonical Poisson bivector $\Pi$ and $f$ is any polynomial on~$\mathfrak{g}^*$, is analytic and polynomial in momenta. Indeed, the analyticity is obvious and the polynomiality can be argued as follows. If $\xi\in\mathfrak{g}$ is treated as a linear function on $\mathfrak{g}^*$, the function $H_\xi=\mu_{\rm can}^*\xi$ is the Hamiltonian function of the corresponding fundamental vector field~$V_\xi$, which in turn can be treated as a fiberwise linear function on~$T^*(G/K)$ (cf.\ Definition~\ref{defi2} and Remark~\ref{refund}). Thus, if~$f$ is a polynomial in~$\xi$, then $\mu_{\rm can}^*f$ is fiberwise polynomial.

By Theorem \ref{below}(3) the involutive set of functions $\mathcal{I}$ (\ref{setoffu}), where $\Pi_{t_0}=\Pi$, is complete on~$T^*(G/K)$. We have to prove that the quadratic forms $q(x):=b(x,x)$ and $q_N(x):=b_N(x,x)$, $x\in \Gamma(T^*(G/K))$, where we identified $T(G/K)$ with $T^*(G/K)$ by means of~$b$, is contained in this set. Let $Q(x)=B(x,x)$ be the quadratic form of $B$ understood as a Casimir function on~$\mathfrak{g}^*$ after the identification of~$\mathfrak{g}$ and~$\mathfrak{g}^*$ by means of~$B$. Then by Theorem~\ref{below}(4) the function~$\mu_{\rm can}^*Q$ belongs to~$\mathcal{I}$. One can show that in fact $\mu_{\rm can}^*Q$ coincides with~$q$. Indeed, $b$ belongs to the class of the so-called submersion metrics obtained from the right-invariant metrics on~$G$ by the canonical submersion $G\to G/K$. The quadratic forms of all the submersion metrics are of the form $\mu_{\rm can}^*f$, where $f$ is the corresponding quadratic polynomial on $\mathfrak{g}\cong\mathfrak{g}^*$ \cite[Section~7]{bjObzor}.

To prove that $q_N\in \mathcal{I}$ recall that by Theorem \ref{below}(4) the set $\mathcal{I}$ besides the functions $\mu_{\rm can}^*\mathcal{F}$ consists of the functions of the form $\mu_\lambda^*f$, $\lambda\not=\lambda_i$, $f\in Z^\Pi$. On the other hand, $\mu_\lambda=\mu_{\rm can}\circ((N-\lambda I)^{-1})^t$ by formula (\ref{mom}), hence the functions of the form $b\big(((N-\lambda I)^{-1})^*x,\big((N-\lambda I)^{-1}\big)^*x\big)=b((N-\lambda I)^{-1}\big((N-\lambda I)^{-1}\big)^*x,x)$, $x\in \Gamma(T^*(G/K))$, belong to $\mathcal{I}$. Moreover, $\mathcal{I}$ will contain also the coefficients of the Laurent expansion $b\big((N-\lambda I)^{-1}\big((N-\lambda I)^{-1}\big)^*x,x\big)=\frac1{\lambda^2}b(x,x)+\frac1{\lambda^3}b((N+N^*)x,x)+\cdots$ corresponding to the expansion $(N-\lambda I)^{-1}=-\big(\frac1{\lambda}I +\frac1{\lambda^2}N+\cdots\big)$.

Finally, the functions $\mu_\lambda^*f$, where $f$ are polynomial Casimir functions of $\Pi$, are polynomial in momenta (since $\big((N-\lambda I)^{-1}\big)^t$ is a fiberwise linear map).
\end{proof}

\section[Applications: two homogeneous spaces with integrable geodesic flows]{Applications: two homogeneous spaces\\ with integrable geodesic flows}\label{s.appl}

In the table from Example \ref{onish} among the triples $(\mathfrak{g},\mathfrak{g}_1,\mathfrak{g}_2)$ of compact Lie algebras such that $\mathfrak{g}=\mathfrak{g}_1+\mathfrak{g}_2$ of one can find two distinguished from our point of view series: $(A_{2n-1},C_n,A_{2n-2}\oplus T)$ and $(D_{n+1},B_n,A_n\oplus T)$. For both of them the pairs $(\mathfrak{g},\mathfrak{g}_i)$, $i=1,2$, are symmetric, i.e., the Lie algebra $\mathfrak{g}_i$ is the fixed point set of an automorphism of $\mathfrak{g}$ of second order (cf.\ \cite[Tables II, III, Section~6, Chapter~X]{helgason}). In this context we have to mentioned the following reformulation of the result of Brailov \cite[Theorem~5, Section~37]{TFbols}.

\begin{Theorem}\label{brailov} Let $\mathfrak{g}$ be a semisimple Lie algebra and $\mathfrak{g}_0 \subset\mathfrak{g}$ its symmetric subalgebra. Then \begin{gather*}
\operatorname{ind} \mathfrak{g}=\operatorname{ind} (\mathfrak{g}_0\ltimes(\mathfrak{g}/\mathfrak{g}_0)),
\end{gather*}
where $\mathfrak{g}_0\ltimes(\mathfrak{g}/\mathfrak{g}_0)$ is the so-called $\mathbb{Z}_2$-contraction of $\mathfrak{g}$, i.e., the semidirect product of the Lie algebra $\mathfrak{g}_0$ and the vector space $\mathfrak{g}/\mathfrak{g}_0$ with respect of the natural $\operatorname{ad}$-representation of~$\mathfrak{g}_0$ in $\mathfrak{g}/\mathfrak{g}_0$.
\end{Theorem}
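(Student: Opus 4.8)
The plan is to reduce the statement to a computation of the index of the $\mathbb{Z}_2$-contraction $\mathfrak{g}_0\ltimes(\mathfrak{g}/\mathfrak{g}_0)$ by means of the Ra\"{i}s formula already recalled in Lemma \ref{lrais}. Since $\mathfrak{g}_0$ is symmetric, there is an involution $\sigma\colon\mathfrak{g}\to\mathfrak{g}$ with $\mathfrak{g}_0=\mathfrak{g}^\sigma$ and a complementary eigenspace $\mathfrak{m}=\{X\in\mathfrak{g}\colon\sigma X=-X\}$, so that $\mathfrak{g}=\mathfrak{g}_0\oplus\mathfrak{m}$ with $[\mathfrak{g}_0,\mathfrak{g}_0]\subset\mathfrak{g}_0$, $[\mathfrak{g}_0,\mathfrak{m}]\subset\mathfrak{m}$, $[\mathfrak{m},\mathfrak{m}]\subset\mathfrak{g}_0$. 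As a $\mathfrak{g}_0$-module $\mathfrak{g}/\mathfrak{g}_0\cong\mathfrak{m}$, so $\mathfrak{g}_0\ltimes(\mathfrak{g}/\mathfrak{g}_0)$ is exactly the contraction in which the $[\mathfrak{m},\mathfrak{m}]$-bracket is switched off. By Lemma \ref{lrais} (the Ra\"{i}s formula), for $a$ in the open dense set $R((\mathfrak{g}/\mathfrak{g}_0)^*)$ one has
\begin{gather*}
\operatorname{ind}(\mathfrak{g}_0\ltimes(\mathfrak{g}/\mathfrak{g}_0))=\operatorname{ind}\mathfrak{g}_0^{a}+\operatorname{codim}_{(\mathfrak{g}/\mathfrak{g}_0)^*}\mathcal{O}_a,
\end{gather*}
so the whole problem becomes: compute, for generic $a\in\mathfrak{m}^*$, the dimension of the coadjoint $\mathfrak{g}_0$-orbit through $a$ and the index of the stabilizer $\mathfrak{g}_0^a$, and show the sum equals $\operatorname{ind}\mathfrak{g}$.

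The second step is to pick a convenient generic $a$. Using an $\operatorname{Ad}G$-invariant form to identify $\mathfrak{g}\cong\mathfrak{g}^*$ and $\mathfrak{m}\cong\mathfrak{m}^*$, I would take $a$ corresponding to a regular element $h\in\mathfrak{m}$ lying in a maximal abelian subspace $\mathfrak{a}\subset\mathfrak{m}$ (a ``Cartan subspace'' of the symmetric pair), i.e.\ an element whose centralizer in $\mathfrak{m}$ is exactly $\mathfrak{a}$. Then, by the standard structure theory of symmetric spaces, the $\mathfrak{g}_0$-centralizer of $h$ is $\mathfrak{z}_{\mathfrak{g}_0}(\mathfrak{a})=\mathfrak{m}_0$, the ``centralizer part'', and one computes $\dim\mathcal{O}_a=\dim\mathfrak{g}_0-\dim\mathfrak{m}_0$, hence $\operatorname{codim}_{\mathfrak{m}^*}\mathcal{O}_a=\dim\mathfrak{m}-\dim\mathfrak{g}_0+\dim\mathfrak{m}_0$. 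On the other hand $\mathfrak{g}_0^a=\mathfrak{m}_0$ is a reductive (in fact compact) Lie algebra whose index is its rank, and $\operatorname{rank}\mathfrak{m}_0=\operatorname{rank}\mathfrak{g}_0-\dim\mathfrak{a}$ (since a Cartan subalgebra of $\mathfrak{g}_0$ decomposes relative to $\mathfrak{a}$ into $\mathfrak{a}$ plus a Cartan subalgebra of $\mathfrak{m}_0$). Combining, $\operatorname{ind}(\mathfrak{g}_0\ltimes(\mathfrak{g}/\mathfrak{g}_0))=\operatorname{rank}\mathfrak{g}_0-\dim\mathfrak{a}+\dim\mathfrak{m}-\dim\mathfrak{g}_0+\dim\mathfrak{m}_0$.

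The third step is bookkeeping: rewrite the right-hand side using $\dim\mathfrak{g}=\dim\mathfrak{g}_0+\dim\mathfrak{m}$, the root-space decomposition of $\mathfrak{g}$ relative to a $\sigma$-stable Cartan subalgebra containing $\mathfrak{a}$, and the fact that for a semisimple Lie algebra $\operatorname{ind}\mathfrak{g}=\operatorname{rank}\mathfrak{g}$. A short count of restricted roots (distinguishing $\mathfrak{a}$, the centralizer $\mathfrak{m}_0$, and the restricted root spaces in $\mathfrak{g}_0$ and in $\mathfrak{m}$, which pair off) shows the expression collapses to $\operatorname{rank}\mathfrak{g}$, giving $\operatorname{ind}(\mathfrak{g}_0\ltimes(\mathfrak{g}/\mathfrak{g}_0))=\operatorname{ind}\mathfrak{g}$. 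I would verify that $h$ chosen as above indeed lies in $R(\mathfrak{m}^*)$, i.e.\ that the corank of $\Pi_{(\mathfrak{g}_0\ltimes(\mathfrak{g}/\mathfrak{g}_0))^*}$ at a covector restricting to $a$ attains the minimum; this follows because the Ra\"{i}s-type lower bound $\operatorname{ind}\mathfrak{g}_0^a+\operatorname{codim}\mathcal{O}_a\ge\operatorname{ind}(\mathfrak{g}_0\ltimes(\mathfrak{g}/\mathfrak{g}_0))$ from Lemma \ref{lrais} is met with equality by the computation just performed.

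The main obstacle is the restricted-root bookkeeping in step three: one must be careful that the ``centralizer'' contributions $\mathfrak{m}_0\cap\mathfrak{g}_0$ and $\mathfrak{m}_0\cap\mathfrak{m}$ are accounted for correctly (for a symmetric pair $\mathfrak{m}_0$ is contained in $\mathfrak{g}_0$, but in the general setting one keeps track of both), and that the pairing of restricted root spaces $\mathfrak{g}_\alpha^{\mathfrak{g}_0}\leftrightarrow\mathfrak{g}_\alpha^{\mathfrak{m}}$ is exact, which is where semisimplicity (nondegeneracy of the Killing form on each restricted root space) is essential. An alternative, which sidesteps the explicit root count, is to invoke the known equality $\operatorname{ind}\mathfrak{g}_0+\operatorname{ind}(\text{associated graded})=\dim\mathfrak{a}+\dots$; but the direct computation above is the cleanest and is exactly the content of Brailov's argument in \cite[Section~37]{TFbols}, so I would present it as sketched.
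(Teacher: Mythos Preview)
The paper does not give its own proof of Theorem~\ref{brailov}: it is stated as a reformulation of a known result of Brailov and cited from \cite[Theorem~5, Section~37]{TFbols}, then used as a black box. So there is nothing to compare against; your proposal is an attempt to supply the argument the paper omits.

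Your overall strategy (Ra\"{\i}s formula plus the structure theory of the symmetric pair, choosing $a$ corresponding to a regular element of a Cartan subspace $\mathfrak{a}\subset\mathfrak{m}$) is the right one and is essentially how the result is proved. There is, however, a concrete error in step two: you write $\operatorname{rank}\mathfrak{m}_0=\operatorname{rank}\mathfrak{g}_0-\dim\mathfrak{a}$, justified by ``a Cartan subalgebra of $\mathfrak{g}_0$ decomposes relative to $\mathfrak{a}$ into $\mathfrak{a}$ plus a Cartan subalgebra of $\mathfrak{m}_0$''. But $\mathfrak{a}\subset\mathfrak{m}$, so no Cartan subalgebra of $\mathfrak{g}_0$ can contain $\mathfrak{a}$. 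The correct statement is that a $\sigma$-stable Cartan subalgebra of $\mathfrak{g}$ containing $\mathfrak{a}$ splits as $\mathfrak{a}\oplus\mathfrak{t}$ with $\mathfrak{t}$ a Cartan subalgebra of $\mathfrak{m}_0$, hence $\operatorname{rank}\mathfrak{m}_0=\operatorname{rank}\mathfrak{g}-\dim\mathfrak{a}$. With this fix step three becomes immediate: the restricted-root pairing you describe gives $\dim\mathfrak{g}_0-\dim\mathfrak{m}_0=\dim\mathfrak{m}-\dim\mathfrak{a}$, so $\operatorname{codim}_{\mathfrak{m}^*}\mathcal{O}_a=\dim\mathfrak{a}$, and therefore
\[
\operatorname{ind}(\mathfrak{g}_0\ltimes\mathfrak{m})=\operatorname{rank}\mathfrak{m}_0+\dim\mathfrak{a}=(\operatorname{rank}\mathfrak{g}-\dim\mathfrak{a})+\dim\mathfrak{a}=\operatorname{rank}\mathfrak{g}=\operatorname{ind}\mathfrak{g},
\]
without any further ``short count''. (A minor side remark: $\mathfrak{m}_0$ is reductive for any semisimple $\mathfrak{g}$, but ``in fact compact'' is only true when $\mathfrak{g}$ itself is compact.)
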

In particular, it follows from this result that both the series of decompositions $\mathfrak{g}=\mathfrak{g}_1+\mathfrak{g}_2$ mentioned satisfy condition~(\ref{eq78}) of Theorem~\ref{kro}. This allows us to formulate the following theorem.

\begin{Theorem}\label{appl} Let $G/K$ be one of the following homogeneous spaces:
\begin{enumerate}\itemsep=0pt
\item[$(a)$] ${\rm SU}(2n)/({\rm S}({\rm U}(2n-1)\times {\rm U}(1))\cap {\rm Sp}(n))$;
\item[$(b)$] ${\rm SO}(2n+2)/({\rm SO}(2n+1)\cap {\rm U}(n+1))$.
\end{enumerate}
Then the geodesic flow of
\begin{enumerate}\itemsep=0pt
\item[$1)$] the normal metric $b$ on $G/K$ and
\item[$2)$] the $G$-invariant metric $b_N$ $($see Theorem~{\rm \ref{thgeod})} corresponding to the $G$-invariant Nijenhuis $(1,1)$-tensor~$N$ on $G/K$ with the real spectrum $\{\lambda_1,\lambda_2\}$, $\lambda_1\not=\lambda_2$, $\lambda_i\not=0$, related to the decomposition $\mathfrak{g}=\mathfrak{g}_1+\mathfrak{g}_2$ with $\mathfrak{k}=\mathfrak{g}_1\cap\mathfrak{g}_2$ by Theorem~{\rm \ref{T1}}
 \end{enumerate}
is completely integrable in the class of analytic integrals polynomial in momenta.

Here $\mathfrak{g}$ and $\mathfrak{k}$ are the Lie algebras of $G$ and $K$ respectively and the triples of subalgebras $(\mathfrak{g},\mathfrak{g}_1,\mathfrak{g}_2)$ are equal to $(A_{2n-1},C_n,A_{2n-2}\oplus T)$ and $(D_{n+1},B_n,A_n\oplus T)$ respectively. The explicit formulae for the embeddings $\mathfrak{g}_i\subset \mathfrak{g}$ as well as the decomposition $\mathfrak{k}^\perp=\mathfrak{k}_1\oplus\mathfrak{k}_2$ of the complementary to~$\mathfrak{k}$ space corresponding to the decomposition $\mathfrak{g}=\mathfrak{g}_1+\mathfrak{g}_2$ and the ``inertia operator'' $n_{\mathfrak{k}^\perp}+n_{\mathfrak{k}^\perp}^*=(N+N^*)|_{T_o(G/K)\cong \mathfrak{k}^\perp}$ $($here $o=P(e))$ are listed in Appendix~{\rm \ref{appe}}.
\end{Theorem}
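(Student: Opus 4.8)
The plan is to verify the hypotheses of Theorem~\ref{kro} for each of the two families and then to apply Theorem~\ref{thgeod}. Since the prescribed tensor $N$ has exactly two distinct eigenvalues, $s=2$, hence $\check{\mathfrak{g}}_1=\mathfrak{g}_2$ and $\check{\mathfrak{g}}_2=\mathfrak{g}_1$, and condition~\eqref{eq78} amounts to the two equalities $\operatorname{ind}(\mathfrak{g}_i\ltimes(\mathfrak{g}/\mathfrak{g}_i))=\operatorname{ind}\mathfrak{g}$, $i=1,2$. The existence of an $G$-invariant invertible semisimple Nijenhuis $(1,1)$-tensor $N$ with real spectrum $\{\lambda_1,\lambda_2\}$ ($\lambda_1\ne\lambda_2$, $\lambda_i\ne 0$) realizing the decomposition $\mathfrak{g}=\mathfrak{g}_1+\mathfrak{g}_2$, $\mathfrak{k}=\mathfrak{g}_1\cap\mathfrak{g}_2$, follows from Theorem~\ref{T1}: indeed $\mathfrak{g}_1\cap\mathfrak{g}_2=\mathfrak{k}$, the sums $\mathfrak{g}_i+\mathfrak{g}_j$ are among $\mathfrak{g}_1$, $\mathfrak{g}_2$, $\mathfrak{g}$, the induced decomposition of $\mathfrak{g}/\mathfrak{k}$ is direct because $\dim\mathfrak{g}_1+\dim\mathfrak{g}_2-\dim\mathfrak{k}=\dim\mathfrak{g}$, and the reality conditions are vacuous since $s=2$ with all data real; invertibility is then arranged by adding $\operatorname{Id}$, which changes neither the operator pencil nor the Poisson pencil.

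First I would identify the symmetric pairs. For $(A_{2n-1},C_n)$ this is $(\mathfrak{su}(2n),\mathfrak{sp}(n))$ (type AII) and for $(A_{2n-1},A_{2n-2}\oplus T)$ it is $(\mathfrak{su}(2n),\mathfrak{s}(\mathfrak{u}(2n-1)\oplus\mathfrak{u}(1)))$ (type AIII); for $(D_{n+1},B_n)$ it is $(\mathfrak{so}(2n+2),\mathfrak{so}(2n+1))$ (type BDI, the sphere $S^{2n+1}$) and for $(D_{n+1},A_n\oplus T)$ it is $(\mathfrak{so}(2n+2),\mathfrak{u}(n+1))$ (type DIII); cf.~\cite[Tables II, III, Section~6, Chapter~X]{helgason}. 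In every case both $\mathfrak{g}_1$ and $\mathfrak{g}_2$ are fixed-point subalgebras of involutive automorphisms of $\mathfrak{g}$, so Brailov's theorem (Theorem~\ref{brailov}) gives $\operatorname{ind}\mathfrak{g}=\operatorname{ind}(\mathfrak{g}_i\ltimes(\mathfrak{g}/\mathfrak{g}_i))$ for $i=1,2$; this is exactly condition~\eqref{eq78}, equivalently~\eqref{eq67} by Lemma~\ref{lrais}.

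It remains to verify that the natural action of $G$ on $T^*(G/K)$ is generically locally free, which is the one genuinely computational point. The stabilizer in $G$ of a covector $\alpha\in T^*_x(G/K)$ is contained in the isotropy group $G_x$ (conjugate to $K$), and under the identification $T^*_o(G/K)\cong(\mathfrak{g}/\mathfrak{k})^*$, $o=P(e)$, it equals the stabilizer of $\alpha$ for the contragredient isotropy representation of $K$; thus it suffices to exhibit a single $\alpha\in(\mathfrak{g}/\mathfrak{k})^*$ with $\operatorname{Stab}_K(\alpha)$ finite. Since $\alpha\mapsto\dim\operatorname{Stab}_K(\alpha)$ is upper semicontinuous, once this dimension vanishes somewhere it vanishes on a dense open set, where the (compact) stabilizer is discrete, hence finite, and by the principal-orbit-type theorem it is the principal isotropy subgroup $H$. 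Using the explicit matrix realizations of $\mathfrak{g}_i\subset\mathfrak{g}$ and of the splitting $\mathfrak{k}^\perp=\mathfrak{k}_1\oplus\mathfrak{k}_2$ recorded in Appendix~\ref{appe}, I would pick a ``regular'' covector (one whose $K$-centralizer is as small as possible) and check by a direct matrix computation that this centralizer is trivial, or at any rate finite. I expect precisely this step --- the local freeness --- to be the main obstacle: unlike the index identity, which is handed to us by Brailov's theorem, it is not a formal consequence of a general statement and genuinely uses the concrete embeddings.

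Granting local freeness, all hypotheses of Theorem~\ref{kro} are met, so the bi-Poisson structure generated by the reduced pair $(p_*\Pi,p_*\Pi_1)$ is Kronecker on an open dense set; Theorem~\ref{thgeod} then yields that the geodesic flows of the normal metric $b$ and of the metric $b_N$ attached to $N+N^*$ are completely integrable in the class of analytic integrals polynomial in momenta, which is the assertion of the theorem. The embeddings $\mathfrak{g}_i\subset\mathfrak{g}$, the decomposition $\mathfrak{k}^\perp=\mathfrak{k}_1\oplus\mathfrak{k}_2$, the inertia operator $n_{\mathfrak{k}^\perp}+n_{\mathfrak{k}^\perp}^*$, and the conditions for positive definiteness of $b_N$ are provided in Appendix~\ref{appe}.
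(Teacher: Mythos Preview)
Your strategy is identical to the paper's: verify condition~\eqref{eq78} via Brailov's theorem (both $(\mathfrak{g},\mathfrak{g}_1)$ and $(\mathfrak{g},\mathfrak{g}_2)$ are symmetric pairs, exactly as you identify them), then check generic local freeness of the $G$-action on $T^*(G/K)$ and invoke Theorem~\ref{thgeod}. Your reduction of local freeness to finding a single element of $\mathfrak{k}^\perp\cong(\mathfrak{g}/\mathfrak{k})^*$ with trivial $\mathfrak{k}$-centralizer, together with the semicontinuity argument, is also exactly what the paper does.

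The gap is that you stop precisely where the paper's proof really begins. You write that you ``would pick a `regular' covector \dots\ and check by a direct matrix computation that this centralizer is trivial,'' but you do not carry this out, and this step is neither short nor routine. The paper devotes essentially its entire proof of Theorem~\ref{appl} to it: for each series it passes to the complexification (observing that it suffices to exhibit $E\in(\mathfrak{k}^\mathbb{C})^\perp$ with $Z_{\mathfrak{k}^\mathbb{C}}(E)=\{0\}$), writes out $\mathfrak{k}^\mathbb{C}$ and $(\mathfrak{k}^\mathbb{C})^\perp$ explicitly, and then produces a carefully chosen element. In case~(a) this is a sum $E=E_1+E_2$ where $E_1$ contains a standard nilpotent Jordan block and $E_2$ a vector of ones, and one first computes $\operatorname{stab}_\rho(E_1)$ and then shows $\operatorname{stab}_{\tilde\rho}(E_2)=\{0\}$ inside it; in case~(b) the element is built from a block-diagonal matrix of $2\times2$ standard symplectic blocks, and triviality of the stabilizer is obtained by an inductive argument on the block structure of $A$. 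None of these choices is the obvious ``generic'' guess, and the verifications occupy a couple of pages. So while your outline is correct, the proposal as it stands asserts local freeness rather than proving it; that is the substantive content you still need to supply.
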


\begin{proof} In view of Theorem \ref{brailov} the result will follow form Theorem~\ref{thgeod} if we ensure that the action of~$G$ on~$T^*(G/K)$ is locally free (which is an essential assumption of Theorem~\ref{thgeod}). Below we prove this fact, which is equivalent to the fact that the stabilizer $\mathfrak{k}^E:=\operatorname{stab}_\rho(E)$ of a generic element~$E$ in $\mathfrak{k}^{\perp}$ under the isotropy action $\rho\colon \mathfrak{k} \rightarrow \mathfrak{gl}\big(\mathfrak{k}^{\perp}\big)$ vanishes; here $\mathfrak{k}^{\perp}$ is the orthogonal complement to $\mathfrak{k}$ with respect to the (nondegenerate) Killing form on $\mathfrak{g}$ and we identify isotropy and coisotropy action by means of this form restricted to $\mathfrak{k}^\perp$. In other words, $\mathfrak{k}^E$ coincides with~$Z_\mathfrak{k}(E)$, the centralizer of the element $E\in\mathfrak{k}^{\perp}$ in $\mathfrak{k}$. In fact, since the function $\dim\mathfrak{k}^E$ is lower semicontinuous, it is enough to show the existence of an element $E$ with $\mathfrak{k}^E=\{0\}$. Note that it is sufficient to show the existence of such an element for the complexified action which we do below. We list explicit realizations of the complexifications $\big(\mathfrak{g}^\mathbb{C},\mathfrak{g}^\mathbb{C}_1,\mathfrak{g}^\mathbb{C}_2\big)$ for the above mentioned triples $(\mathfrak{g},\mathfrak{g}_1,\mathfrak{g}_2)$ as well as the subspace $\big(\mathfrak{k}^\mathbb{C}\big)^\perp$ complementary to the subspace $\mathfrak{k}^\mathbb{C}=\mathfrak{g}_1^\mathbb{C}\cap\mathfrak{g}_2^\mathbb{C}$ with respect to the Killing form. Besides, we indicate the element $E\in\big(\mathfrak{k}^\mathbb{C}\big)^{\perp}$ with~$\operatorname{stab}_\rho(E)=\{0\}$ and outline the proof of the last equality.
We consider separately cases~(a) and~(b).\footnote{We switch to modern notations and denote the classical Lie algebras by small Gothic letters.}

 \emph{Case $(a)$: $(\mathfrak{a}_{2n-1},\mathfrak{c}_n,\mathfrak{a}_{2n-2}\oplus \mathfrak{t})$.} 
\begin{gather*}\mathfrak{g}^\mathbb{C}=\mathfrak{sl}(2n,\mathbb{C}),\qquad
\mathfrak{g}_1^\mathbb{C} = \left\{ \left[ \begin{matrix} Z_{1}&Z_{2}\\ \noalign{\medskip}Z_{3}&-{Z_{1}}^{T}\end{matrix} \right] \,|\, Z_2=Z_2^T, \, Z_3=Z_3^T \right\} \cong \mathfrak{sp}(n,\mathbb{C}),
\\
\mathfrak{g}_2^\mathbb{C}=\left\{ \left[
\begin{matrix}
Z & \mathbf{0}\\
\mathbf{0}^T & -t
\end{matrix} \right] \,|\, Z \in \mathfrak{gl}(2n-1,\mathbb{C}), t=\operatorname{Tr} Z \right\}\cong \mathfrak{gl}(2n-1,\mathbb{C}) ,
\\
\mathfrak{k}^\mathbb{C}=\left\{
\left[ \begin{matrix}
\tilde{A} & \mathbf{0} & \tilde{B} & \mathbf{0} \\
\mathbf{0}^T & t & \mathbf{0}^T & 0 \\
\tilde{C} & \mathbf{0} & -\tilde{A}^T & \mathbf{0} \\
\mathbf{0}^T & 0 & \mathbf{0}^T & -t
\end{matrix}
\right] \,|\, \tilde{A},\tilde{B},\tilde{C} \in \mathfrak{gl}(n-1,\mathbb{C}), \, \tilde{B}=\tilde{B}^T, \, \tilde{C}=\tilde{C}^T,\, t\in\mathbb{C} \right\} \\
\hphantom{\mathfrak{k}^\mathbb{C}}{} \cong \mathfrak{sp}(n-1,\mathbb{C})\oplus \mathfrak{t},
\\
\big(\mathfrak{k}^\mathbb{C}\big)^\perp=
\left\{
\left[
\begin{array}{c|c|c|c}
A & \mathbf{v}_1 & B & \mathbf{v}_2 \\ \hline
\mathbf{u}_1^T & a & \mathbf{u}_2^T & b\tsep{2pt} \\ \hline
C & \mathbf{v}_3 & A^T & \mathbf{v}_4 \tsep{2pt}\\ \hline
\mathbf{u}_3^T & c & \mathbf{u}_4^T & a\tsep{2pt}
\end{array}
\right] \,|\, \mathbf{v}_i,\mathbf{u}_i \in \mathbb{C}^{n-1},\, B=-B^T, \,C=-C^T,\right.\\
\left. \hphantom{\big(\mathfrak{k}^\mathbb{C}\big)^\perp=}{}a=-\operatorname{Tr} A,\, b, c \in \mathbb{C}
\vphantom{\begin{array}{c|c|c|c}
A & \mathbf{v}_1 & B & \mathbf{v}_2 \\ \hline
\mathbf{u}_1^T & a & \mathbf{u}_2^T & b\tsep{2pt} \\ \hline
C & \mathbf{v}_3 & A^T & \mathbf{v}_4 \tsep{2pt}\\ \hline
\mathbf{u}_3^T & c & \mathbf{u}_4^T & a\tsep{2pt}
\end{array}}
\right\}.
\end{gather*}

The isotropy action $\rho\colon \mathfrak{k} \to \mathfrak{gl}\big( \mathfrak{k}^{\perp}\big)$ can be decomposed into direct sum of two invariant subspaces
\begin{gather*}
V_1:=\left\{
\left[
\begin{array}{c|c|c|c}
A & \mathbf{0} & B & \mathbf{0} \\ \hline
\mathbf{0}^T & 0 & \mathbf{0}^T & b\tsep{2pt} \\ \hline
C & \mathbf{0} & A^T & \mathbf{0} \tsep{2pt}\\ \hline
\mathbf{0}^T & c & \mathbf{0}^T & 0\tsep{2pt}
\end{array}
\right] \right\}, \qquad V_2:=\left\{
\left[
\begin{array}{c|c|c|c}
0 & \mathbf{v}_1 & 0 & \mathbf{v}_2 \\ \hline
\mathbf{u}_1 & 0 & \mathbf{u}_2 & 0 \\ \hline
0 & \mathbf{v}_3 & 0 & \mathbf{v}_4 \\ \hline
\mathbf{u}_3 & 0 & \mathbf{u}_4 & 0
\end{array}
\right]\right\},\end{gather*} and a trivial 1-dimensional representation which will be neglected.

Let $\rho\colon \mathfrak{k} \rightarrow \mathfrak{gl}(V_1 \oplus V_2 )$ be the coisotropy representation with the invariant subspaces $V_i$ and let $E_i\in V_i$.
Then obviously $\operatorname{stab}_{\rho}(E_1+E_2)= \operatorname{stab}_{\tilde{\rho}}(E_2)$, where $\tilde{\rho}:=\rho|_{\operatorname{stab}_{\rho}(E_1)}$.

Take the element
\begin{gather*} \mathfrak{k}^\perp \ni E_1 =
\left[
\begin{array}{c|c|c|c}
A & 0 & 0 & 0 \\ \hline
0 & 0 & 0 & 1 \\ \hline
0 & 0 & A^T & 0 \tsep{2pt}\\ \hline
0 & 0 & 0 & 0
\end{array}
\right],\end{gather*} with $A= \left[ \begin{smallmatrix}
0		&	1		&	 			&					\\
&	\ddots	& 	\ddots		&					 \\
&			& 				&			1	 \\
&			&				&			0	 \\
\end{smallmatrix} \right] $, the standard nilpotent matrix.
Then $\operatorname{stab}_{\rho}(E_1)$ consists of the matrices of the form
\begin{gather*}
Y=\left[ \begin{array}{c|c|c|c}
A & 0 & B & 0 \\ \hline
0 & 0 & 0 & 0 \\ \hline
C & 0 & -A^T & 0 \tsep{2pt}\\ \hline
0 & 0 & 0 & 0
\end{array} \right],\end{gather*}
 where
 \begin{gather*} A=\left[\begin{matrix}
a_1 & a_2 & \dots & a_{n-1} \\
& a_1 & \ddots & \vdots \\
& & \ddots & a_2 \\
& & & a_1
\end{matrix}\right],\qquad B=\left[ \begin{matrix}
b_1 & \dots & b_{n-2} & b_{n-1} \\
\vdots & \reflectbox{$\ddots$} & b_{n-1} & \\
b_{n-2} & \reflectbox{$\ddots$} & & \\
b_{n-1} & & &
\end{matrix} \right] ,\\ C=\left[ \begin{matrix}
& & & c_1 \\
& & c_1 & c_2 \\
& \reflectbox{$\ddots$} & \reflectbox{$\ddots$} & \vdots \\
c_1 & c_2 & \dots & c_{n-1}
\end{matrix} \right] .
\end{gather*}
Choose $ E_2\in V_2$
with
$\mathbf{u}_1=\mathbf{v}_1^T=(1,\dots,1)$ and trivial $\mathbf{v}_i$, $\mathbf{u}_i$, $i>1$. Then for $Y \in \operatorname{stab}_{\rho}(E_1)$ we have
\begin{gather*} [E_2,Y]=
\left[
\begin{array}{c|c|c|c}
0 & \mathbf{u} & 0 & 0 \\ \hline
\mathbf{v} & 0 & \mathbf{w}_2 & 0 \\ \hline
0 & \mathbf{w}_1 & 0 & 0 \\ \hline
0 & 0 & 0 & 0
\end{array}
\right],\end{gather*}
where
\begin{gather*} \mathbf{u}=\left[\begin{array}{l}
a_{1}+\dots +a_{n-2} +a_{n-1}\\
a_{1}+\dots +a_{n-2}\\
\vdots \\
a_{1}
\end{array}\right]
,\qquad
\mathbf{v}=-\left[\begin{array}{l}
a_{1}\\
a_{1}+a_{2}\\
\vdots \\
a_{1}+a_{2}+\dots +a_{n-1}
\end{array}\right]^T
,\\
\mathbf{w}_1=-\left[\begin{array}{r}
c_{1}+c_{2}+\dots+c_{n-1}\\
c_{2}+\dots+c_{n-1}\\
\vdots\\
c_{n-1}
\end{array}\right], \qquad
\mathbf{w}_2=\left[\begin{array}{r}
b_{1}+b_2+\dots+b_{n-1}\\
b_{2}+\dots+b_{n-1}\\
 \vdots\\
b_{n-1}
\end{array}\right]^T.
\end{gather*}
If $Y\in\operatorname{stab}_{\tilde\rho}(E_2)$, then $a_i=0$, $b_i=0$, and $c_i=0$, which implies $\operatorname{stab}_{\rho}(E) =\{0\} $, where $E:=E_1+E_2$.

\emph{Case $(b)$: $(\mathfrak{d}_{n+1},\mathfrak{b}_n,\mathfrak{a}_n\oplus \mathfrak{t})$.}
\begin{gather*}
 \mathfrak{g}^\mathbb{C}=\left\{ \left[ \begin{matrix} Z_{1}&Z_{2}\\ Z_{3}&-{Z_{1}}^{T}\end{matrix} \right] \,|\, Z_i\in \mathfrak{gl}(n+1,\mathbb{C}),\, Z_2=-Z_2^T,\, Z_3=-Z_3^T \right\} \cong \mathfrak{so}(2n+2,\mathbb{C}) ,\\
 \mathfrak{g}_1^\mathbb{C}=\left\{ \left[
\begin {array}{c|c|c|c}
0&\mathbf{u}^T&0&\mathbf{v}^T\\ \hline
-\mathbf{v}&W_{1}&-\mathbf{v}&W_{2}\\ \hline
0&\mathbf{u}^T&0&\mathbf{v}^T\tsep{2pt}\\ \hline
-\mathbf{u}&W_{3}&-\mathbf{u}&-W_{1}^{T}\tsep{2pt}
\end {array} \right] \!|\, W_2=-W^T_2,\, W_3=-W_3^T,\, \mathbf{v},\mathbf{u} \in \mathbb{C}^n\! \right\}\! \cong \mathfrak{so}(2n+1,\mathbb{C}),\\
\mathfrak{g}_2^\mathbb{C}=\left\{ \left[ \begin{matrix} \tilde{A}&0\\ 0&-\tilde{A}^{T}
\end{matrix} \right] \in \mathfrak{g} \,|\, \tilde{A}\in \mathfrak{gl}(n+1,\mathbb{C}) \right\} \cong \mathfrak{gl}(n+1,\mathbb{C}),\\
\mathfrak{k}^\mathbb{C}=\left\{ Y:=\left[
\begin {array}{c|c|c|c}
0&\mathbf{0}^T&0&\mathbf{0}^T \\ \hline
\mathbf{0}&A&\mathbf{0}&0 \\ \hline
0&\mathbf{0}^T&0&\mathbf{0}^T \tsep{2pt}\\ \hline
\mathbf{0}&0&\mathbf{0}&-{A}^{T}\tsep{2pt}
\end {array} \right] \,|\, A \in \mathfrak{gl}(n,\mathbb{C}) \right\}\cong \mathfrak{gl}(n,\mathbb{C}),\\
\big(\mathfrak{k}^\mathbb{C}\big)^\perp=\left\{Z:=\left[ \begin {array}{c|c}
\begin {array}{c|c} a&\mathbf{u}^T \\ \hline -\mathbf{v}& 0 \end {array} & Z_2 \\ \hline
Z_3 & \begin {array}{c|c} -a& \mathbf{v}^T\tsep{2pt} \\ \hline-\mathbf{u}&0 \end {array}
\end {array} \right] \,|\, Z_2=-Z_2^T,\, Z_3=-Z_3^T,\, \mathbf{u},\mathbf{v}\in \mathbb{C}^n,\, a\in \mathbb{C} \right\}.
\end{gather*}

For $Y\in\mathfrak{k}^\mathbb{C}$ and $Z\in\big(\mathfrak{k}^\mathbb{C}\big)^\perp$ as above with
\begin{gather*} Z_2=\left[ \begin {array}{c|c} 0&\mathbf{w}^T_2 \\ \hline -{\mathbf{w}}_2& B \end {array} \right] , \qquad Z_3=\left[ \begin {array}{c|c} 0&\mathbf{w}^T_3 \\ \hline -{\mathbf{w}}_3& C \end {array} \right],
\end{gather*}
where $B$ and $C$ are skew-symmetric $n\times n$ matrices, one has
\begin{gather*}
[Y,Z]=\left[
\begin{array}{c|c|c|c}
0&-\mathbf{u}^TA&0& (A\mathbf{w}_2)^T \\ \hline
A\mathbf{v}&0&-A\mathbf{w}_2&AB+BA^T \tsep{2pt}\\ \hline
0&-\mathbf{w}_3^TA&0&-(A\mathbf{v})^T \tsep{2pt}\\ \hline
A^T \mathbf{w}_3&-(A^TC+CA)&A^T \mathbf{u}&0\tsep{2pt}
\end{array} \right].
\end{gather*}
We will prove the triviality of the stabilizer of the element
$E=\left[ \begin{array}{c|c}
0 & J \\ \hline
J & 0
\end{array} \right]\in \mathfrak{k}^\perp $,
where $J=
\left[ \begin {smallmatrix}
0		& 1				& 			& \\
-1 		&				& 			& \\
& 				& \ddots	& 1 \\
& 				& -1		& 0
\end{smallmatrix} \right] $.
Observe that conditions $A\mathbf{w}_2=0,A^T \mathbf{w}_3=0$ for $\mathbf{w}_2=\mathbf{w}_3=(1,0,\dots,0)^T$ imply that $a_{1,i}=0$, $a_{i,1}=0$, where we put $A=||a_{ij}||_{i,j=1}^n$. Thus for any $Y\in \operatorname{stab}(E)$ the
matrix $A$ is of the form $\left[
\begin {array}{c|c} 0&0 \\ \hline 0& A_{n-1} \end {array} \right]$ where $ A_{n-1} \in\mathfrak{gl}(n-1,\mathbb{C})$.
Next, such $Y \in \operatorname{stab}(E)$ if and only if simultaneously
\begin{gather*} AB+BA^T=\left[ \begin{matrix}
0&0&0&0&0&0\\
0&0&a_{22}&a_{32}&\dots&a_{n,2} \\
0&-a_{22}&0&*&*&* \\
0&-a_{32}&*&0&*&*\\
0&\vphantom{\int^0}\smash[t]{\vdots}&*&*& &*\\
0&-a_{n,2}&*&*&*&0
\end{matrix}
\right]=0
\end{gather*} and
\begin{gather*} A^TC+CA=\left[ \begin{matrix}
0&0&0&0&0&0\\
0&0&a_{22}&a_{23}&\dots&a_{2,n} \\
0&-a_{22}&0&*&*&* \\
0&-a_{23}&*&0&*&*\\
0&\vphantom{\int^0}\smash[t]{\vdots}&*&*& &*\\
0&-a_{2,n}&*&*&*&0
\end {matrix}
\right]=0.
\end{gather*} Therefore $A$ has to be of the form $ \left[
\begin {array}{c|c} 0&0 \\ \hline 0& A_{n-2} \end {array} \right]$ with $ A_{n-2} \in\mathfrak{gl}(n-2,\mathbb{C})$. By induction we conclude that $A=0$, i.e., $\operatorname{stab}(E)=\{0\}$.
\end{proof}

\section{Concluding remarks}\label{concl}

We would like to note that in the proof of Theorem \ref{kro} we tried to maximally accurately indicate the open dense set $W$ such that the reduced bi-Poisson structure is Kronecker at any point of~$p(W)$ (and \emph{is not Kronecker} in the complement). This is important from the point of view of study qualitative analysis of the geodesic flow since outside this set the singularities of the corresponding lagrangian fibration appear (cf.~\cite{bolsIzoSing}).

The assumption of compactness of the Lie groups $G$ and $K$ which appeared in Theorem~\ref{kro} (see also Theorem~\ref{below}) was used in order to guarantee (1) the existence of complexification of the homogeneous space $G/K$ and as a consequence of other related objects; (2) the existence of an $G$-invariant open dense set $M_0$ in $M=T^*(G/K)$ (the set $M_H$) such that the orbit space $M_0/G$ is a smooth manifold. In fact, the assumption of compactness can be essentially weakened (since conditions (1) and (2) can be achieved for a wider class of Lie groups) preserving the conclusion of the theorem. We did not discuss these weaker assumptions as the main application (Theorem~\ref{appl}) is aimed in the class of compact homogeneous spaces.

The assumption that the action of $G$ on $T^*(G/K)$ is free, which is essential in Theorems~\ref{below} and~\ref{kro}, can be bypassed by a special reduction to smaller groups instead of~$G$ and~$K$, see~\cite{pMyk} and~\cite{p4}.

We would like to mention that a matter of further research is the study of the cases when the necessary and sufficient conditions~(\ref{eq78}) are not satisfied. In such cases the canonical commuting set of functions $\mathcal{B}$ related to the reduced bi-Hamiltonian structure \emph{is not complete}. However, based on the experience from the study~\cite{pSymmetries} of bi-Hamiltonian structures related with Lie pencils (hence in fact reductions of $(T^*(G/K),\Pi,\Pi_1)$ with trivial~$K$) one could expect additional symmetries in this case and, as a consequence, additional Noether integrals. One can ask for algebraic conditions sufficient for the completeness of the family $\mathcal{B}$ enlarged by these integrals.

Finally, it is worth mentioning that our theory related to the triples $(\mathfrak{g},\mathfrak{g}_1,\mathfrak{g}_2)$ (see Section~\ref{s.appl}) is very close to that appearing within the generalized chain method \cite{bj3, bjObzor}. Note however, that the assumption of maximality of rank of the symmetric space, which is essential in \cite[Theorem~8.6]{bjObzor}, is not satisfied for our symmetric pairs $(\mathfrak{g},\mathfrak{g}_i)$, i.e., the overlap between the theories mentioned is minimal (and requires further study).

\appendix

\section[Compact real forms of the triples $(\mathfrak{g},\mathfrak{g}_1,\mathfrak{g}_2)$ and inertia operators]{Compact real forms of the triples $\boldsymbol{(\mathfrak{g},\mathfrak{g}_1,\mathfrak{g}_2)}$\\ and inertia operators}\label{appe}

Below we list explicit realizations of the compact real forms for the triples $(\mathfrak{g},\mathfrak{g}_1,\mathfrak{g}_2)$ used in Theorem \ref{appl} as well as the decompositions of the subspace $\mathfrak{k}^\perp=\mathfrak{k}_1\oplus\mathfrak{k}_2$ complementary to the subspace $\mathfrak{k}$ with respect to the Killing form induced by the decompositions $\mathfrak{g}=\mathfrak{g}_1+\mathfrak{g}_2$, where $\mathfrak{k}_i=\mathfrak{g}_i\cap\mathfrak{k}^\perp$, and formulae for the ``inertia operators'' $n_{\mathfrak{k}^\perp}+n^*_{\mathfrak{k}^\perp}\colon \mathfrak{k}^\perp\to\mathfrak{k}^\perp$ induced by the operator $n_{\mathfrak{k}^\perp}\colon \mathfrak{k}^\perp\to\mathfrak{k}^\perp$, $n_{\mathfrak{k}^\perp}|_{\mathfrak{k}_i}=\lambda_i\operatorname{Id}_{\mathfrak{k}_i}$. We also note that in both cases below the inertia operators are positive definite under the restrictions
\begin{gather*}
0<\lambda_1,\qquad 0<\lambda_2,\qquad
{\frac {\lambda_{1} }{\big(\sqrt {2}+1\big)^2}}< \lambda_{2}<{\frac {\lambda_{1} }{\big(\sqrt {2}-1\big)^2}}.
\end{gather*}

 \emph{Case $(\mathfrak{a}_{2n-1},\mathfrak{c}_n,\mathfrak{a}_{2n-2}\oplus \mathfrak{t})$.}
\begin{gather*}
\mathfrak{g}=\mathfrak{su}(2n)=\big\{A \in \mathfrak{sl}(2n,\mathbb{C})\,|\, A=-\overline{A}^T\big\},\\
\mathfrak{g}_1'=\left\{ \left[
\begin{matrix} Z_{1}&Z_{2}\\ -\overline{Z}_{2}& \overline{Z}_1\end{matrix} \right] \,|\, Z_1=-\overline{Z}_1^T,\, Z_2=Z_2^T \right\}\cong\mathfrak{sp}(n),\\
\mathfrak{g}_2=\left\{ \left[ \begin{matrix} Z &\mathbf{0}\\ \mathbf{0}^T & -t \end{matrix} \right] \,|\, Z\in \mathfrak{u}(2n-1),\, t=\operatorname{Tr} Z \right\}\cong\mathfrak{su}(2n-1)\oplus\mathfrak{t}, \\
\mathfrak{k}=\left\{\left[ \begin{array}{c|c|c|c}
Z_1 & \mathbf{0} & Z_2 & \mathbf{0} \\ \hline
\mathbf{0}^T & {\rm i}t & \mathbf{0}^T & 0\tsep{2pt} \\ \hline
-\overline{Z}_2 & \mathbf{0} & \overline{Z}_1 & \mathbf{0}\tsep{2pt} \\ \hline
\mathbf{0}^T & 0 & \mathbf{0}^T & -{\rm i}t\tsep{2pt}
\end{array}
\right] \,|\, Z_i\in \mathfrak{gl}(n-1,\mathbb{C}), \, Z_1=-\overline{Z}_1^T, \, Z_2=Z_2^T, \, t\in\mathbb{R} \right\}\\
\hphantom{\mathfrak{k}}{} \cong \mathfrak{sp}(n-1)\oplus \mathfrak{t},\\
\mathfrak{k}^\perp=\left\{X:=\left[ \begin{array}{c|c|c|c}
Z_1 & \mathbf{v} & Z_2 & \mathbf{u} \\ \hline
-\overline{\mathbf{v}}^T & a & \mathbf{u_1}^T & z \tsep{2pt}\\ \hline
\overline{Z}_2 & -\overline{\mathbf{u}}_1 & -\overline{Z}_1& \mathbf{v}_1\tsep{2pt} \\ \hline
- \overline{\mathbf{u}}^T & -\overline{z} & -\overline{\mathbf{v}}_1^T & a\tsep{2pt}
\end{array}
\right]\,|\, Z_1=-\overline{Z}_1^T, \,Z_2=-Z_2^T, \, \mathbf{v},\mathbf{u},\mathbf{v}_1,\mathbf{u}_1 \in \mathbb{C}^{n-1},\right.\\
\left.\hphantom{\mathfrak{k}^\perp= }{}
\vphantom{\left[ \begin{array}{c|c|c|c}
Z_1 & \mathbf{v} & Z_2 & \mathbf{u} \\ \hline
-\overline{\mathbf{v}}^T & a & \mathbf{u_1}^T & z \tsep{2pt}\\ \hline
\overline{Z}_2 & -\overline{\mathbf{u}}_1 & -\overline{Z}_1& \mathbf{v}_1\tsep{2pt} \\ \hline
- \overline{\mathbf{u}}^T & -\overline{z} & -\overline{\mathbf{v}}_1^T & a\tsep{2pt}
\end{array}
\right]}{}
z\in \mathbb{C},\, a = - \operatorname{Tr} Z_1 \right\}
,\\
\mathfrak{k}_1=\left\{\left[\begin{array}{c|c|c|c}
0		& 	\mathbf{v} & 0	&\mathbf{u} \\ \hline
-\overline{\mathbf{v}}^T &	0 & \mathbf{u}^T & z	\tsep{2pt} \\ \hline
0		& -\overline{\mathbf{u}} & 0 & \overline{\mathbf{v}} \\ \hline
- \overline{\mathbf{u}}^T & -\overline{z} & -\mathbf{v}^T & 0\tsep{2pt}
\end{array}\right]\right\} ,\!\!\qquad
 \mathfrak{k}_2= \left\{\left[\begin{array}{c|c|c|c}
Z_1 & \mathbf{v}_1 & Z_2 & 0 \\ \hline
-\overline{\mathbf{v}}_1^T & a & \mathbf{u}_1^T & 0 \tsep{2pt}\\ \hline
\overline{Z}_2 & -\overline{\mathbf{u}}_1 & -\overline{Z}_1 & 0\tsep{2pt} \\ \hline
0 & 0 & 0 & a \\
\end{array} \right] |\, a = -\operatorname{Tr} Z_1 \right\},\\
\frac1{2}\big(n_{\mathfrak{k}^\perp}+n_{\mathfrak{k}^\perp}^*\big)X=
\left[ \begin{array}{c|c|c|c}
\lambda_2Z_1 & \begin{array}{c} \lambda_2\mathbf{v}\\ +\frac{(\lambda_1-\lambda_2)}{2}\overline{\mathbf{v}}_1\end{array} & \lambda_2Z_2 & \begin{array}{c} \lambda_1\mathbf{u} \\ +\frac{(\lambda_1-\lambda_2)}{2}{\mathbf{u}}_1 \end{array} \\ \hline
\begin{array}{c} -\lambda_2\overline{\mathbf{v}}^T \\ -\frac{(\lambda_1-\lambda_2)}{2}{\mathbf{v}}_1^T\end{array} & \lambda_2 a & \begin{array}{c} \lambda_2\mathbf{u_1}^T\\ +\frac{(\lambda_1-\lambda_2)}{2}\mathbf{u}^T\end{array} & \lambda_1z \\ \hline
\lambda_2\overline{Z}_2 & \begin{array}{c} -\lambda_2\overline{\mathbf{u}}_1 \\ -\frac{(\lambda_1-\lambda_2)}{2}\overline{\mathbf{u}}\end{array} & -\lambda_2\overline{Z}_1& \begin{array}{c} \lambda_1\mathbf{v}_1 \\ +\frac{(\lambda_1-\lambda_2)}{2}\overline{\mathbf{v}}\end{array} \\ \hline
\begin{array}{c} - \lambda_1\overline{\mathbf{u}}^T\\ -\frac{(\lambda_1-\lambda_2)}{2}\overline{\mathbf{u}}_1^T\end{array} & -\lambda_1\overline{z} & \begin{array}{c} -\lambda_1\overline{\mathbf{v}}_1^T \\ -\frac{(\lambda_1-\lambda_2)}{2}\mathbf{v}^T\end{array} & \lambda_2 a
\end{array}
\right].
\end{gather*}

 \emph{Case $(\mathfrak{d}_{n+1},\mathfrak{b}_n,\mathfrak{a}_n\oplus \mathfrak{t})$}, cf.\ \cite[solution to Exercise~B.3, Chapter~VI]{helgason}.
\begin{gather*} \mathfrak{g}= \left\{ \left[
\begin{matrix}
W & Z \\
\overline{Z} & \overline{W}
\end{matrix} \right] \,|\, Z,W \in \mathfrak{gl}(n+1,\mathbb{C}),\, W=-\overline{W}^T, \,Z=-Z^T \right\}\cong\mathfrak{so}(2n+2,\mathbb{R}),\\
 \mathfrak{g}_1=
\left\{ \left[
\begin {array}{c|c|c|c}
0	&\mathbf{u}^T 	& 0 &\overline{\mathbf{u}}^T\\ \hline
-\overline{\mathbf{u}}&W_{1}&-\overline{\mathbf{u}}& Z_1\\ \hline
0&\mathbf{u}^T& 0 &\overline{\mathbf{u}}^T\tsep{2pt} \\ \hline
-\mathbf{u}&\overline{Z}_{1}&-\mathbf{u}& \overline{W}_1\tsep{2pt} \end {array} \right]\,|\,
 Z_1,W_1 \in \mathfrak{gl}(n,\mathbb{C}), \, Z_1=-Z_1^T, \, W_1=-\overline{W}_1^T,\, \mathbf{u} \in \mathbb{C}^n
\right\}\\
\hphantom{\mathfrak{g}_1}{} \cong\mathfrak{so}(2n+1,\mathbb{R}),\\
 \mathfrak{g}_2 =
\left\{ \left[ \begin{matrix} A&0\\ 0&-A^{T}
\end{matrix} \right] \,|\, A\in \mathfrak{u}(n+1) \right\}\cong \mathfrak{u}(n+1) ,\\
 \mathfrak{k} = \left\{ \left[
\begin {array}{c|c|c|c}
0&\mathbf{0}^T&0&\mathbf{0}^T \\ \hline
\mathbf{0}&W_1&\mathbf{0}&0 \\ \hline
0&\mathbf{0}^T&0& \mathbf{0}^T \tsep{2pt} \\ \hline
\mathbf{0}&0&\mathbf{0}&\overline{W}_1\tsep{2pt}
\end {array} \right] \,|\, W_1 =-\overline{W}_1^T \right\}\cong \mathfrak{u}(n),\\
\mathfrak{k}^\perp=\left\{X:= \left[
\begin {array}{c|c|c|c}
{\rm i}a	&\mathbf{u}^T 	& 0 &\overline{\mathbf{v}}^T\\ \hline
-\overline{\mathbf{u}}& 0 &-\overline{\mathbf{v}}& Z_1\\ \hline
0&\mathbf{v}^T& -{\rm i}a &\overline{\mathbf{u}}^T \tsep{2pt} \\ \hline
-\mathbf{v}&\overline{Z}_{1}&-\mathbf{u} & 0 \tsep{2pt} \end {array} \right] \,|\, Z=-Z^T, \, a \in \mathbb{R}, \, \mathbf{u},\mathbf{v}\in \mathbb{C}^n
\right\} ,\\
 \mathfrak{k}_1=\left\{
\left[
\begin {array}{c|c|c|c}
0	&\mathbf{u}^T 	& 0 &\overline{\mathbf{u}}^T\\ \hline
-\overline{\mathbf{u}}& 0 &-\overline{\mathbf{u}}& Z_1\\ \hline
0&\mathbf{u}^T& 0 &\overline{\mathbf{u}}^T\tsep{2pt} \\ \hline
-\mathbf{u}&\overline{Z}_{1}&-\mathbf{u} & 0 \tsep{2pt} \end {array} \right] \right\},\qquad
 \mathfrak{k}_2=\left\{ \left[ \begin {array}{c|c}
\begin {array}{c|c} {\rm i}a&\mathbf{u}_1^T \\ \hline -\overline{\mathbf{u}}_1& 0 \end {array} & 0 \\ \hline
0 & \begin {array}{c|c} -{\rm i}a&\overline{\mathbf{u}}_1^T \tsep{2pt} \\ \hline -\mathbf{u}_1&0 \end {array}
\end {array} \right] \right\} ,\\
\frac{1}{2}\big(n_{\mathfrak{k}^\perp}+n_{\mathfrak{k}^\perp}^*\big)X=
\left[
\begin {array}{c|c|c|c}
\lambda_2 {\rm i}a	& \begin{array}{c} \lambda_1 \mathbf{u}^T \\ + \frac{(\lambda_1 -\lambda_2)}{2}\overline{\mathbf{v}}^T\end{array} & 0 &
\begin{array}{c} \lambda_1 \overline{\mathbf{v}}^T\\ + \frac{(\lambda_1 - \lambda_2)}{2} \mathbf{u}^T\end{array} \\ \hline
\begin{array}{c} -\lambda_1 \overline{\mathbf{u}} \\ - \frac{(\lambda_1 -\lambda_2)}{2}\mathbf{v}\end{array} & 0 & \begin{array}{c} -\lambda_1 \overline{\mathbf{v}} \\ - \frac{(\lambda_1 - \lambda_2)}{2} \mathbf{u}\end{array} & \lambda_1 Z_1\\ \hline
0 & \begin{array}{c} \lambda_1 \mathbf{v}^T \\ + \frac{(\lambda_1-\lambda_2)}{2} \overline{\mathbf{u}}^T \end{array} & -\lambda_2 {\rm i}a &
\begin{array}{c} \lambda_1 \overline{\mathbf{u}}^T \\ + \frac{(\lambda_1 - \lambda_2)}{2} \mathbf{v}^T\end{array} \\ \hline
\begin{array}{c} -\lambda_1 \mathbf{v} \\ - \frac{(\lambda_1-\lambda_2)}{2} \overline{\mathbf{u}}\end{array} & \lambda_1 \overline{Z}_{1}&
\begin{array}{c} -\lambda_1 \mathbf{u} \\ - \frac{(\lambda_1 -\lambda_2)}{2}\overline{\mathbf{v}}\end{array} & 0 \end{array} \right].
\end{gather*}

\subsection*{Acknowledgements}

We are very grateful to anonymous referees for useful remarks which allowed to essentially improve the quality of our paper in its final version.

\pdfbookmark[1]{References}{ref}
\LastPageEnding


\begin{thebibliography}{99}
\footnotesize\itemsep=0pt

\bibitem{besse}
Besse A.L., Einstein manifolds, \textit{Ergebnisse der Mathematik und ihrer
 Grenzgebiete~(3)}, Vol.~10, \href{https://doi.org/10.1007/978-3-540-74311-8}{Springer-Verlag}, Berlin, 1987.

\bibitem{bols'}
Bolsinov A., Compatible {P}oisson brackets on {L}ie algebras and the
 completeness of families of functions in involution, \href{https://doi.org/10.1070/IM1992v038n01ABEH002187}{\textit{Math. USSR Izv.}}
 \textbf{38} (1992), 69--90.

\bibitem{bolsinov16}
Bolsinov A., Complete commutative subalgebras in polynomial {P}oisson algebras:
 a~proof of the {M}ischenko--{F}omenko conjecture, \href{https://doi.org/10.2298/TAM161111012B}{\textit{Theor. Appl. Mech.}}
 \textbf{43} (2016), 145--168, \href{https://arxiv.org/abs/1206.3882}{arXiv:1206.3882}.

\bibitem{bolsIzoSing}
Bolsinov A., Izosimov A., Singularities of bi-{H}amiltonian systems,
 \href{https://doi.org/10.1007/s00220-014-2048-3}{\textit{Comm. Math. Phys.}} \textbf{331} (2014), 507--543, \href{https://arxiv.org/abs/1203.3419}{arXiv:1203.3419}.

\bibitem{bj3}
Bolsinov A., Jovanovi\'{c} B., Complete involutive algebras of functions on
 cotangent bundles of homogeneous spaces, \href{https://doi.org/10.1007/s00209-003-0596-x}{\textit{Math.~Z.}} \textbf{246}
 (2004), 213--236.

\bibitem{bjObzor}
Bolsinov A., Jovanovi\'{c} B., Integrable geodesic flows on {R}iemannian
 manifolds: construction and obstructions, in Contemporary Geometry and
 Related Topics, Editors N.~Bokan, M.~Djoric, Z.~Rakic, A.T. Fomenko, J.~Wess,
 \href{https://doi.org/10.1142/9789812703088_0004}{World Sci. Publ.}, River Edge, NJ, 2004, 57--103, \href{https://arxiv.org/abs/math-ph/0307015}{arXiv:math-ph/0307015}.

\bibitem{DGJ}
Dragovi\'{c} V., Gaji\'{c} B., Jovanovi\'{c} B., Singular {M}anakov flows and
 geodesic flows on homogeneous spaces of {${\rm SO}(N)$}, \href{https://doi.org/10.1007/s00031-009-9062-0}{\textit{Transform.
 Groups}} \textbf{14} (2009), 513--530, \href{https://arxiv.org/abs/0901.2444}{arXiv:0901.2444}.

\bibitem{DK}
Duistermaat J.J., Kolk J.A.C., Lie groups, \textit{Universitext}, \href{https://doi.org/10.1007/978-3-642-56936-4}{Springer-Verlag},
 Berlin, 2000.

\bibitem{golsok2000}
Golubchik I.Z., Sokolov V.V., One more kind of the classical {Y}ang--{B}axter
 equation, \href{https://doi.org/10.1023/A:1004113508705}{\textit{Funct. Anal. Appl.}} \textbf{34} (2000), 296--298.

\bibitem{vinbergOnishIII}
Gorbatsevich V.V., Onishchik A.L., Vinberg E.B., Lie groups and Lie algebras.
 III.~Structure of Lie groups and Lie algebras, \textit{Encyclopaedia of
 Mathematical Sciences}, Vol.~41, \href{https://doi.org/10.1007/978-3-642-56936-4}{Springer-Verlag}, Berlin~-- Heidelberg, 1994.

\bibitem{helgason}
Helgason S., Differential geometry, {L}ie groups, and symmetric spaces,
 \textit{Graduate Studies in Mathematics}, Vol.~34, \href{https://doi.org/10.1090/gsm/034}{Amer. Math.
 Soc.}, Providence, RI, 2001.

\bibitem{jovanovic10}
Jovanovi\'{c} B., Integrability of invariant geodesic flows on {$n$}-symmetric
 spaces, \href{https://doi.org/10.1007/s10455-010-9216-2}{\textit{Ann. Global Anal. Geom.}} \textbf{38} (2010), 305--316,
 \href{https://arxiv.org/abs/1006.3693}{arXiv:1006.3693}.

\bibitem{mks}
Kosmann-Schwarzbach Y., Magri F., Poisson--{N}ijenhuis structures, \textit{Ann.
 Inst. H.~Poincar\'{e} Phys. Th\'{e}or.} \textbf{53} (1990), 35--81.

\bibitem{Magri97}
Magri F., Casati P., Falqui G., Pedroni M., Eight lectures on integrable
 systems, in Integrability of Nonlinear Systems ({P}ondicherry, 1996),
 \textit{Lecture Notes in Phys.}, Vol.~495, \href{https://doi.org/10.1007/BFb0113698}{Springer}, Berlin, 1997, 256--296.

\bibitem{mishch}
Mishchenko A.S., Integration of geodesic flows on symmetric spaces,
 \href{https://doi.org/10.1007/BF01158134}{\textit{Math. Notes}} \textbf{31} (1982), 132--134.

\bibitem{mykSuborb}
Mykytyuk I.V., Integrability of geodesic flows for metrics on suborbits of the
 adjoint orbits of compact groups, \href{https://doi.org/10.1007/s00031-016-9373-x}{\textit{Transform. Groups}} \textbf{21}
 (2016), 531--553, \href{https://arxiv.org/abs/1402.6526}{arXiv:1402.6526}.

\bibitem{p4}
Mykytyuk I.V., Panasyuk A., Bi-{P}oisson structures and integrability of
 geodesic flow on homogeneous spaces, \href{https://doi.org/10.1007/s00031-004-7015-1}{\textit{Transform. Groups}} \textbf{9}
 (2004), 289--308.

\bibitem{pMyk}
Mykytyuk I.V., Panasyuk A., Dirac brackets and reduction of invariant
 bi-{P}oisson structures, \href{https://arxiv.org/abs/1605.03382}{arXiv:1605.03382}.

\bibitem{odsokIntMatrEq}
Odesskii A.V., Sokolov V.V., Integrable matrix equations related to pairs of
 compatible associative algebras, \href{https://doi.org/10.1088/0305-4470/39/40/011}{\textit{J.~Phys.~A: Math. Gen.}} \textbf{39}
 (2006), 12447--12456, \href{https://arxiv.org/abs/math.QA/0604574}{arXiv:math.QA/0604574}.

\bibitem{oni62}
Onishchik A.L., Inclusion relations between transitive compact transformation
 groups, \textit{Tr. Mosk. Mat. Obs.} \textbf{11} (1962), 199--242.

\bibitem{oni69}
Onishchik A.L., Decompositions of reductive {L}ie groups, \href{https://doi.org/10.1070/SM1969v009n04ABEH001292}{\textit{Math. USSR
 Sb.}} \textbf{9} (1969), 515--554.

\bibitem{ortegar}
Ortega J.-P., Ratiu T.S., Momentum maps and {H}amiltonian reduction,
 \textit{Progress in Mathematics}, Vol. 222, \href{https://doi.org/10.1007/978-1-4757-3811-7}{Birkh\"{a}user Boston, Inc.},
 Boston, MA, 2004.

\bibitem{p3}
Panasyuk A., Projections of {J}ordan bi-{P}oisson structures that are
 {K}ronecker, diagonal actions, and the classical {G}audin systems,
 \href{https://doi.org/10.1016/S0393-0440(02)00228-0}{\textit{J.~Geom. Phys.}} \textbf{47} (2003), 379--397,
 \href{https://arxiv.org/abs/math.DG/0209260}{arXiv:math.DG/0209260}.

\bibitem{p5}
Panasyuk A., Algebraic {N}ijenhuis operators and {K}ronecker {P}oisson pencils,
 \href{https://doi.org/10.1016/j.difgeo.2006.05.009}{\textit{Differential Geom. Appl.}} \textbf{24} (2006), 482--491,
 \href{https://arxiv.org/abs/math.DG/0504337}{arXiv:math.DG/0504337}.

\bibitem{pRais}
Panasyuk A., Reduction by stages and the {R}a\"{\i}s-type formula for the index
 of a {L}ie algebra with an ideal, \href{https://doi.org/10.1007/s10455-007-9070-z}{\textit{Ann. Global Anal. Geom.}}
 \textbf{33} (2008), 1--10.

\bibitem{pSymmetries}
Panasyuk A., Bi-{H}amiltonian structures with symmetries, {L}ie pencils and
 integrable systems, \href{https://doi.org/10.1088/1751-8113/42/16/165205}{\textit{J.~Phys.~A: Math. Theor.}} \textbf{42} (2009),
 165205, 20~pages.

\bibitem{pBi-Lie}
Panasyuk A., Compatible {L}ie brackets: towards a classification,
 \textit{J.~Lie Theory} \textbf{24} (2014), 561--623.

\bibitem{rais}
Ra\"{\i}s M., L'indice des produits semi-directs {$E\underset
 \rho\to\times{\mathfrak g}$}, \textit{C.~R.~Acad. Sci. Paris S\'{e}r.~A-B}
 \textbf{287} (1978), A195--A197.

\bibitem{sadetov}
Sadetov S.T., A proof of the {M}ishchenko--{F}omenko conjecture, \textit{Dokl.
 Math.} \textbf{397} (2004), 751--754.

\bibitem{thimm}
Thimm A., Integrable geodesic flows on homogeneous spaces, \href{https://doi.org/10.1017/s0143385700001401}{\textit{Ergodic
 Theory Dynamical Systems}} \textbf{1} (1981), 495--517.

\bibitem{TFbols}
Trofimov V.V., Fomenko A.T., Algebra and geometry of integrable {H}amiltonian
 differential equations, Faktorial, Moscow, 1995.

\bibitem{t12}
Turiel F.-J., Structures bihamiltoniennes sur le fibr\'{e} cotangent,
 \textit{C.~R.~Acad. Sci. Paris S\'{e}r.~I Math.} \textbf{315} (1992),
 1085--1088.

\bibitem{bw}
Whitney H., Bruhat F., Quelques propri\'{e}t\'{e}s fondamentales des ensembles
 analytiques-r\'{e}els, \href{https://doi.org/10.1007/BF02565913}{\textit{Comment. Math. Helv.}} \textbf{33} (1959),
 132--160.

\end{thebibliography}
\end{document}